\documentclass{amsart}
\usepackage{amssymb,amsmath}
\usepackage{graphicx}
\usepackage{subfigure}
\usepackage{ upgreek }
\usepackage[all]{xy}
\usepackage{mathtools}
\usepackage{tikz-cd}
\usepackage[pdftex]{hyperref}
\usepackage{comment}

\parindent=2em
\parskip=1em

\newtheorem{lemma}{Lemma}[section]
\newtheorem{proposition}[lemma]{Proposition}
\newtheorem{corollary}[lemma]{Corollary}
\newtheorem{theorem}[lemma]{Theorem}
\newtheorem{non-theorem}{Non-Theorem}
\newtheorem{conjecture}{Conjecture}

\newtheorem{definition}[lemma]{Definition}
\newtheorem{remark}[lemma]{Remark}

\newtheorem{question}{Question}

\newcommand\abs[1]{|#1|}
\newcommand{\R}{\mathbb{R}}
\newcommand{\Z}{\mathbb{Z}}
\newcommand{\C}{\mathbb{C}}
\newcommand{\Vol}{\operatorname{Vol}}
\newcommand{\scr}{\mathcal}
\newcommand{\RP}{\mathbb{R}\mathbb{P}^2}
\newcommand{\inte}{\operatorname{int}}

\renewcommand{\Re}{\operatorname{Re}}
\renewcommand{\Im}{\operatorname{Im}}

\numberwithin{equation}{section}

\newcommand{\proofend}{\hspace*{\fill} $\Box$\\}

\def\1{\:\!}
\def\2{\;\!}

\def\reg{\operatorname{reg}}

\def\Vol{\operatorname {Vol}}

\def\aff{\operatorname{aff}}

\def\Sympc0{\operatorname{Symp^c_0}}

\def\Int{\operatorname{Int}}

\def\area{\operatorname{area}}

\def\C{\operatorname{C}}

\def\cd{{\mathcal D}}

\def\cs{{\mathcal S}}

\def\cx{{\mathcal X}}

\def\C{\mathbb{C}}

\def\N{\mathbb{N}}
\def\Q{\mathbb{Q}}
\def\R{\mathbb{R}}
\def\Z{\mathbb{Z}}

\def\RP{\operatorname{\mathbb{RP}}}
\def\CP{\operatorname{\mathbb{CP}}}

\def\pp{\partial}

\def\ni{\noindent}

\def\proof{\noindent {\it Proof. \;}}

\makeatletter
\let\@wraptoccontribs\wraptoccontribs
\makeatother

\begin{document}

\title[Quantifications of Gromov's non-squeezing theorem]{On certain quantifications of Gromov's non-squeezing theorem}

\author{Kevin Sackel}
\address{Department of Mathematics and Statistics, University of Massachusetts, Amherst,	MA 01003, USA}
\email{ksackel@umass.edu}

\author{Antoine Song}
\address{California Institute of Technology\\ 177 Linde Hall, \#1200 E. California Blvd., Pasadena, CA 91125}
\email{aysong@caltech.edu}

\author{Umut Varolgunes}
\address{Department of Mathematics, Bo\u{g}azi\c{c}i University}
\email{varolgunesu@gmail.com}

\author{Jonathan J. Zhu}
\address{Department of Mathematics, University of Washington, Seattle, WA, USA}
\email{jonozhu@uw.edu}

\address{Jo\'{e} Brendel,
	Institut de Math\'{e}matiques,
	Universit\'{e} de Neuch\^{a}tel}
\email{joe.brendel@unine.ch}

\contrib[with an appendix by]{Joe Brendel}

\begin{abstract}
Let $R>1$ and let $B$ be the Euclidean $4$-ball of radius $R$ with a closed subset ${E}$ removed. Suppose that $B$ embeds symplectically into the unit cylinder $\mathbb{D}^2 \times \mathbb{R}^2$. By Gromov's non-squeezing theorem, ${E}$ must be non-empty. We prove that the Minkowski dimension of ${E}$ is at least $2$, and we exhibit an explicit example showing that this result is optimal at least for $R \leq \sqrt{2}$. In an appendix by Jo\'{e} Brendel, it is shown that the lower bound is optimal for $R< \sqrt{3}$. We also discuss the minimum volume of ${E}$ in the case that the symplectic embedding extends, with bounded Lipschitz constant, to the entire ball. 
\end{abstract}
\maketitle

\section{Introduction}

Consider $\R^{2n}$ with its standard symplectic structure $\omega = \sum dx_i \wedge dy_i$. A prototypical question in symplectic geometry is to ask whether one domain of $\R^{2n}$ symplectically embeds into another (i.e. via an embedding $\Phi$ with $\Phi^*\omega = \omega$). At the very least, a symplectic embedding preserves the standard volume form $\frac{1}{n!}\omega^n$. However, there is more rigidity in symplectic geometry than just volume. We recall the most famous result certifying this bold claim:

Let $B^{2n}(\pi R^2)\subset \R^{2n}$ be the open ball of radius $R$, and let $Z^{2n}(\pi r^2) = B^2(\pi r^2) \times \R^{2n-2}\subset \R^{2n}$ be the open cylinder of radius $r$. 

\begin{theorem}[Gromov's Nonsqueezing Theorem \cite{Gro85}] There exists a symplectic embedding of $B^{2n}(\pi R^2)$ into $Z^{2n}(\pi r^2)$ if and only if $R \leq r$. 
\end{theorem}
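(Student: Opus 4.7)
The easy direction is immediate: when $R \leq r$, the inclusion $B^{2n}(\pi R^2) \subset B^2(\pi R^2) \times \R^{2n-2} \subset Z^{2n}(\pi r^2)$ is itself a symplectic embedding. The substance is the reverse implication, which I would prove by Gromov's original pseudoholomorphic curve argument. The overall strategy is to compactify the target to a closed symplectic manifold, produce a $J$-holomorphic sphere through the image of the center of the ball in a carefully chosen homology class, pull it back to a $J_0$-holomorphic (hence minimal) surface in the ball, and compare its area with the symplectic area of the class on one side and with a monotonicity lower bound on the other.

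Given a symplectic embedding $\Phi \colon B^{2n}(\pi R^2) \to Z^{2n}(\pi r^2)$, the bounded image $\Phi(B^{2n}(\pi R^2))$ lies in $B^2(\pi r^2) \times (-L,L)^{2n-2}$ for some large $L$. Fix $r' > r$; for a torus $T^{2n-2}$ of sufficiently large volume, this piece of the cylinder embeds symplectically into the closed manifold $M = S^2(\pi r'^2) \times T^{2n-2}$. Choose an $\omega$-compatible almost complex structure $J$ on $M$ that coincides with $\Phi_* J_0$ on $\Phi(B^{2n}(\pi R^2))$, where $J_0$ is the standard complex structure on $\C^n$. Consider the homology class $A = [S^2 \times \mathrm{pt}]$, which has $\omega$-area $\pi r'^2$. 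The central analytic claim is that through the point $p = \Phi(0)$ there exists a $J$-holomorphic sphere in class $A$. For the split product complex structure this is tautological, and to extend it to the deformed $J$ one uses $(i)$ Gromov's compactness theorem, $(ii)$ the fact that every spherical class in $M$ lies in $\Z \cdot A$ and only $A$ itself has $\omega$-area $\pi r'^2$, which together with positivity of $\omega$-area on nonconstant components rules out nodal degenerations, and $(iii)$ a parametric transversality and cobordism argument showing that the degree of the evaluation map at $p$ is preserved as $J$ varies.

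Let $u \colon S^2 \to M$ be such a $J$-holomorphic sphere, and set $C = \Phi^{-1}(u(S^2) \cap \Phi(B^{2n}(\pi R^2)))$. Since the projection of $u$ to the $S^2$ factor has degree one, $u(S^2)$ must exit $\Phi(B^{2n}(\pi R^2))$, so $C$ is a proper $J_0$-holomorphic---hence $\omega$-calibrated, and therefore area-minimizing---surface in $B^{2n}(\pi R^2)$ passing through the origin. Its Euclidean area equals its symplectic area, which is at most $\omega(A) = \pi r'^2$. On the other hand, the monotonicity formula for minimal surfaces in a ball, applied at the origin, yields $\mathrm{Area}(C) \geq \pi R^2$. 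Combining the two bounds and letting $r' \searrow r$ gives $R \leq r$. The chief obstacle is the sphere-existence step, which relies on the full machinery of pseudoholomorphic moduli spaces, with the indecomposability of $A$ being the crucial geometric input that rules out bubbling; the subsequent area comparison and monotonicity are routine once the sphere is in hand.
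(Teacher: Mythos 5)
Your proof is correct and is precisely the standard argument of Gromov that the paper cites rather than reproves; the paper implicitly relies on this machinery and works through the same compactification, almost complex structure deformation, sphere-existence, and monotonicity steps in its Proposition 2.5 (in dimension 4). One small point worth flagging: your claim that the split-product spheres and the cobordism/degree argument transfer to general $2n$ is right, but it leans on generic regularity of the moduli space rather than the automatic transversality available in dimension 4 that the paper invokes via \cite[Proposition 9.4.4]{MS12}; your phrasing ``parametric transversality and cobordism argument'' is the correct fix and you correctly identify indecomposability of $A$ as the input that kills bubbling.
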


Our goal in this paper is to try to quantify the failure of $B^{2n}(\pi R^2)$ to symplectically embed into $Z^{2n}(\pi r^2)$, when $R>r$, via the following motivating question:

\begin{center}
\fbox{\parbox{0.8\textwidth}{\begin{center}\textbf{Motivating Question:}\\
How much do we need to remove from $B^{2n}(\pi R^2)$ so that it embeds symplectically into $Z^{2n}(\pi r^2)$?
\end{center}}}
\end{center}

As a first attempt, one may try to use volume to answer this question. Over all possible symplectic embeddings $B^{2n}(\pi R^2) \hookrightarrow \R^{2n}$, what is the minimal possible volume excluded from $Z^{2n}(\pi r^2)$? This question has a straightforward answer, as the following result of Katok reveals: 
\begin{theorem}[Katok \cite{Kat73}] \label{thm:Katok} 
Given a compact set $X$ in $(\R^{2n},\omega_{\mathrm{std}})$, for every $\epsilon > 0$, there exists a Hamiltonian diffeomorphism $\phi \colon (\R^{2n},\omega_{\mathrm{std}}) \rightarrow (\R^{2n},\omega_{\mathrm{std}})$ such that $$\mathrm{Vol}( \phi(X) \setminus Z(\pi) ) \leq \epsilon.$$
\end{theorem}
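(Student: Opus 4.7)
The plan is to cover $X$ by a finite disjoint union of small symplectic ``bricks'' on a fine grid, then construct a Hamiltonian diffeomorphism that permutes them into the cylinder. Write coordinates on $\R^{2n}$ as $(x_1,y_1,z)$ with $z\in\R^{2n-2}$, and fix $\delta>0$ with $\delta^2<\pi$. Using a grid partition, cover the compact set $X$ by finitely many pairwise disjoint open bricks $b_j=S_j\times Q_j$, $j=1,\ldots,N$, where $S_j\subset\R^2$ is a square of area $\delta^2$ and $Q_j\subset\R^{2n-2}$ is a cube of side $\delta$. Fix a packing of $B^2(\pi)$ by $K:=\lfloor\pi/\delta^2\rfloor$ disjoint open squares $S^*_1,\ldots,S^*_K$ of area $\delta^2$, and assign to each $b_j$ a target $b^*_j:=S^*_{k_j}\times Q^*_j$, where the pairs $(k_j,Q^*_j)$ are chosen so that all the target bricks are pairwise disjoint and lie in $Z(\pi)$ (for instance, by taking the $Q^*_j$ from a distant grid in $\R^{2n-2}$, far from all the $Q_j$). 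Since $b^*_j$ is a translate of $b_j$, let $v_j\in\R^{2n}$ denote the displacement vector with $b^*_j=b_j+v_j$.

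The main step is to exhibit a Hamiltonian diffeomorphism $\phi$ of $\R^{2n}$ with $\phi(b_j)=b^*_j$ for every $j$, which I would construct by moving the bricks to their targets one at a time. For each $j$, choose a smooth path $\gamma_j\colon[0,1]\to\R^{2n}$ with $\gamma_j(0)=0$ and $\gamma_j(1)=v_j$, routed so that a slight thickening of the swept tube $T_j:=\bigcup_{t\in[0,1]}(b_j+\gamma_j(t))$ is disjoint from every other source and target brick; this is possible because the bricks occupy only a bounded subregion of the infinite factor $\R^{2n-2}$. Let $\chi_j$ be a smooth bump function equal to $1$ on $T_j$ and supported in its thickening, and set
\[
H_j(t,q)\;:=\;\chi_j(q)\,\omega\bigl(\gamma_j'(t),\,q\bigr).
\]
Inside $T_j$ the Hamiltonian vector field of $H_j(t,\cdot)$ reduces to the constant vector field $\gamma_j'(t)$, so the time-$1$ map of $H_j$ performs exact translation by $v_j$ on $b_j$ while leaving every other brick fixed. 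Composing the resulting Hamiltonian diffeomorphisms for $j=1,\ldots,N$ produces $\phi$, whence $\phi(X)\subset\bigsqcup_j b^*_j\subset Z(\pi)$ and $\mathrm{Vol}(\phi(X)\setminus Z(\pi))=0\leq\epsilon$.

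The principal technical obstacle is the careful routing of the paths $\gamma_j$ and the bookkeeping needed to guarantee that each tube $T_j$ avoids every other source and target brick at the appropriate times. This is straightforward provided one places the target cubes $Q^*_j$ very far apart in $\R^{2n-2}$, so that each path can be routed through the vast unused portion of that factor; nevertheless, this is the step that requires the most attention. The $\epsilon$ in the statement accommodates approximate variants of this argument (for example, if one prefers to permute the bricks only up to bounded smoothing errors at their boundaries), but the construction above in fact yields the stronger conclusion $\mathrm{Vol}(\phi(X)\setminus Z(\pi))=0$.
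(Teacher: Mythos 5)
The paper does not actually prove Theorem~\ref{thm:Katok}: it is cited from Katok~\cite{Kat73} without proof, though the same circle of ideas reappears quantitatively in the proof of Theorem~\ref{thm:Lip_con}. Your approach (cut into translatable bricks, shepherd each one into the cylinder by a compactly-supported Hamiltonian translation, routing through the unbounded $\R^{2n-2}$ factor) is the standard one and matches the spirit of the paper's Section~\ref{sec:Lipschitz}. The translation Hamiltonian $H_j(t,q)=\chi_j(q)\,\omega(\gamma_j'(t),q)$ is correct and does generate the rigid translation where $\chi_j\equiv 1$.

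However, there is a genuine gap at the very first step. You cannot in general ``cover the compact set $X$ by finitely many pairwise disjoint open bricks.'' If $X$ has a connected component that meets two distinct open bricks, disjointness of the bricks would give a nontrivial clopen partition of that component, a contradiction; so for any connected $X$ of positive diameter (e.g.\ a ball) no such cover exists. A grid partition gives half-open or closed bricks that cover, but then adjacent bricks share boundary walls, and the bump function $\chi_j$ that is $1$ on a thickening of the tube $T_j\supset b_j$ cannot simultaneously be supported away from $b_{j\pm 1}$: the translation of brick $j$ will unavoidably drag a thin sliver of its neighbors along. The correct fix, which is what Katok does and what the paper implements explicitly in Theorem~\ref{thm:Lip_con} (with walls of width $1/L$), is to shrink each grid cell to an interior brick separated from its neighbors by thin walls of total Lebesgue measure less than $\epsilon$; then the bricks are genuinely separated, your routing argument goes through, and the portion of $X$ lying in the walls is exactly what may fail to land in $Z(\pi)$. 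This is precisely why the $\epsilon$ appears in the statement and cannot be removed by your method: the final claim that the construction yields $\mathrm{Vol}(\phi(X)\setminus Z(\pi))=0$ is false as stated, and should be replaced by $\mathrm{Vol}(\phi(X)\setminus Z(\pi))\leq\mathrm{Vol}(X\cap\text{walls})\leq\epsilon$.
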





We therefore modify our attempt to produce a more meaningful version of our question. We discuss two specific instances, though we focus predominantly on the first:
\begin{enumerate}
	\item What is the smallest Minkowski dimension of a subset ${E}\subset B^{2n}(\pi R^2)$ with the property that there is a symplectic embedding $B^{2n}(\pi R^2)\setminus {E}\to Z^{2n}(\pi r^2)$?
	\item Over all possible symplectic embeddings $B^{2n}(\pi R^2) \hookrightarrow \R^{2n}$ with Lipschitz constant at most $L>0$, what is the minimal possible volume excluded from $Z^{2n}(\pi r^2)$?
\end{enumerate}
Here Minkowski dimension stands for the lower Minkowski dimension, which is defined for any subset of $B^{2n}(\pi R^2)$. 
Heuristically, ${E}$ having Minkowski dimension $d\in\mathbb{R}$ means that as $\epsilon\to 0$, the volume of the $\epsilon$-neighborhood of ${E}$ behaves as $c\epsilon^{2n-d}$, for some constant $c>0$.

For each of these two questions, there is some quantity we are trying to minimize: either the Minkowski dimension of ${E}$, or the volume excluded from the cylinder under an $L$-Lipschitz symplectic embedding. In either case, there are two key aspects to discuss.
\begin{itemize}
\item \textbf{Constructive}: We find an explicit symplectic embedding which provides an upper bound on the quantity that we are trying to minimize.
\item \textbf{Obstructive}: For a purported symplectic embedding, we find a lower bound on the quantity that we are trying to minimize.
\end{itemize}
A full answer to these two questions would require that the obstructive and constructive bounds match.
Although these questions are interesting in general dimensions, in this paper we will restrict our attention only to the case of dimension $2n=4$, save for a few open questions posed in Section \ref{sec:questions}.
There are also a plethora of further questions that will be posed in the final section of the paper.

We now discuss our results for each of these two questions.

\subsection{The Minkowski dimension problem}

Recall that here we are asking for the smallest lower Minkowski dimension of a subset ${E}\subset B(\pi R^2)$ with the property that there is a symplectic embedding $B(\pi R^2)\setminus {E}\to Z(\pi r^2), $ assuming $R>r$. (Here and henceforth we drop the superscripts from $B^4(.)$ and $Z^4(.)$ since the dimension is always $4$.) By a scaling argument, it suffices to consider the case $r=1$.

Let us start by discussing the constructive side. Observe that if we remove a union of codimension one affine hyperplanes along a sufficiently fine grid, we end up with many connected components, each of which embeds into $ Z(\pi)$ by translations. Interestingly, at least in a certain range of $R$, one can do better and find a $2$-dimensional submanifold ${E}$, whose complement embeds into $Z(\pi)$. 

To explain this result we need to introduce some notation. Let us consider the Lagrangian disk 
$$L:=B(2\pi)\cap \{y_1=y_2=0\}.$$
Let us also define $\mathcal{E}(\pi,4\pi)\subset \mathbb{R}^4$ to be the open ellipsoid $$\{(x_1,y_1,x_2,y_2)\mid x_1^2+y_1^2+\frac{x_2^2+y_2^2}{4}<1\},$$ and let $$\mathcal{C}:=\mathcal{E}(\pi,4\pi)\cap\{x_2=y_2=0\}.$$

\begin{theorem}\label{thmsymplectomorphism}
$B(2\pi)\setminus L$ is symplectomorphic to $\mathcal{E}(\pi,4\pi)\setminus \mathcal{C}$. 

Consequently, $B(2\pi)\setminus L$ symplectically embeds into $Z(\pi)$.
\end{theorem}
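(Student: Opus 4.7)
The second assertion follows immediately from the first, since the defining inequality $|z_1|^2+|z_2|^2/4<1$ of $\mathcal{E}(\pi,4\pi)$ forces $|z_1|^2<1$; hence $\mathcal{E}(\pi,4\pi)\subset Z(\pi)$, and composing the symplectomorphism with this inclusion gives the required embedding $B(2\pi)\setminus L \hookrightarrow Z(\pi)$. The substantive work is therefore constructing the symplectomorphism $B(2\pi)\setminus L \cong \mathcal{E}(\pi,4\pi)\setminus \mathcal{C}$.

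My plan is to express both sides in adapted Darboux coordinates that reveal a common structure. On $B(2\pi)\setminus L$, since $L=\{y_1=y_2=0\}$, I introduce polar coordinates in the $y$-plane, $(y_1,y_2)=(r\cos\theta,r\sin\theta)$ with $r>0$ and $\theta\in S^1$, together with the co-rotation $p := x_1\cos\theta+x_2\sin\theta$, $q := -x_1\sin\theta+x_2\cos\theta$. A direct computation shows $\omega=dp\wedge dr+d(rq)\wedge d\theta$; setting $s := rq$ gives $\omega=dp\wedge dr+ds\wedge d\theta$, and $B(2\pi)\setminus L$ becomes the region
\[
\Omega_1 := \{(p,r,s,\theta) : r>0,\ \theta\in S^1,\ p^2+r^2+s^2/r^2<2\}.
\]
Analogously, on $\mathcal{E}(\pi,4\pi)\setminus\mathcal{C}$ (with $\mathcal{C}=\{z_2=0\}\cap\mathcal{E}$), polar coordinates $(x_2,y_2)=(\sqrt{2t}\cos\phi,\sqrt{2t}\sin\phi)$ yield Darboux coordinates $(x_1,y_1,t,\phi)$ with $\omega=dx_1\wedge dy_1+dt\wedge d\phi$ and region
\[
\Omega_2 := \{(x_1,y_1,t,\phi) : t>0,\ \phi\in S^1,\ x_1^2+y_1^2+t/2<1\}.
\]
Both domains now appear as explicit semi-algebraic subsets of $\mathbb{R}^3 \times S^1$ equipped with a standard Darboux form.

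The remaining task, which is the main obstacle, is to build an explicit symplectomorphism $\Omega_1 \to \Omega_2$. A naive $S^1$-equivariant ansatz (matching $\theta$ with $\phi$ and relating the natural moment maps by $t=f(s)$) fails: the range and symmetries are incompatible, since $s$ takes values in $(-1,1)$ with reduced-space areas even in $s$, while $t$ takes values in $(0,2)$ with no such symmetry; reduced areas at fixed moment-map level cannot be matched under any such $f$. The correct construction must therefore be genuinely non-$S^1$-equivariant. I expect the formula to fold in the $s$-coordinate (so that both $s>0$ and $s<0$ in the ball map into the positive-$t$ region of the ellipsoid) while applying a compensating twist in the angular variable to absorb the resulting asymmetry; one natural candidate is to use the $S^1$-invariant combination $r^2+s^2/r^2$ as the new moment map $t$, and then construct the remaining coordinates via a generating function of mixed type. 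The hard part is writing down such a formula explicitly and verifying both that it is a smooth diffeomorphism $\Omega_1\to\Omega_2$ with $\Phi^*\omega_2=\omega_1$, and that it extends continuously to the correct boundary behavior as $r\to 0^+$ (the $L$-end, corresponding to $t\to 0^+$) and as $p^2+r^2+s^2/r^2 \to 2$ (the $\partial B$-end, corresponding to the $\partial\mathcal{E}$-end).
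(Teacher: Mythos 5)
Your reduction of both sides to Darboux coordinates on $\mathbb{R}^3\times S^1$ is correct (I checked $\omega=dp\wedge dr+d(rq)\wedge d\theta$ and the corresponding computation for the ellipsoid), and your observation that a naive $S^1$-equivariant ansatz fails because the reduced areas on the ball side are even in $s\in(-1,1)$ while on the ellipsoid side they decrease linearly in $t\in(0,2)$ is a genuine and useful obstruction. However, at the decisive point the proposal stops: you explicitly leave as ``the hard part'' the construction of the actual diffeomorphism $\Omega_1\to\Omega_2$, its symplectomorphism property, and the boundary behaviour as $r\to 0^+$ and near $\partial B$. Since the theorem \emph{is} the existence of this map, that is the whole content, not a loose end --- so as written this is a plan plus a coordinate setup, not a proof.

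For comparison, the paper avoids working in Darboux coordinates entirely and instead argues geometrically. It first uses the standard identification $B(2\pi)\setminus L\cong \mathbb{CP}^2(2\pi)\setminus(S_{[0:0:1]}\cup L_{\RP})$ (Exercise 9.4.11 of McDuff--Salamon). Then it shows (Proposition~\ref{extension}, via the extension of the Oakley--Usher map $\Phi([q,p])=[\sqrt{f(|p|)}\,p+\tfrac{i}{\sqrt{f(|p|)}}\,q]$ to the boundary reduction) that $\overline{D^*\mathbb{RP}^2}\cong\mathbb{CP}^2(2\pi)$ as symplectic manifolds, with $Z_{\RP}\mapsto L_{\RP}$, $S\mapsto S_{[0:0:1]}$, $Q\mapsto FQ$. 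Next (Proposition~\ref{propbrsd}) it identifies $\overline{D^*\mathbb{RP}^2}\setminus Z_{\RP}$ as a Biran standard symplectic disk bundle of area $\pi$ over the reduced space $Q$, with the disks $\mathbb{D}_\gamma$ as fibres. Finally it invokes Opshtein's explicit lemma on symplectic disk bundles to identify the complement of the fibre $\mathbb{D}_{\gamma_0}$ with $\mathcal{E}(\pi,4\pi)$, with the antipodal fibre $\mathbb{D}_{-\gamma_0}$ landing on $\mathcal{C}$. Your ``folding with an angular twist'' heuristic is morally related --- the Opshtein/Biran picture does precisely fold the two sides of the zero section over the disk bundle --- but turning that heuristic into a verified explicit formula is what the paper accomplishes by routing the argument through $\mathbb{CP}^2$ and $T^*\mathbb{RP}^2$ rather than by writing a generating function in $(p,r,s,\theta)$-coordinates.

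If you want to complete your approach directly, the likely most economical path is to first produce an explicit area-preserving diffeomorphism of reduced spaces (exploiting the fold at $s=0$) and then extend to a symplectomorphism fibrewise; but the smoothness as $r\to 0^+$ (the $L$-end) is where the real subtlety lies, and it is exactly the point handled in the paper by the blow-down / boundary-reduction machinery.
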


In particular, removing a Lagrangian plane from $B(2\pi)$ halves its Gromov capacity. Our proof of Theorem \ref{thmsymplectomorphism} has a great deal in common with the Section 3 of Oakley-Usher's beautiful paper \cite{OU16} where the same geometries are used for a different purpose. In fact, we show in Section \ref{sec:construction} how the projective space $\mathbb{CP}^2$ is symplectomorphic to the boundary reduction of the unit cotangent bundle $D^*\mathbb{RP}^2$ by using the explicit map of \cite[Lemma 3.1]{OU16}. Theorem \ref{thmsymplectomorphism} can also be derived from the proof of Biran's general decomposition theorem \cite[Theorem 1.A; Example 3.1.2]{biran}, and Opshtein \cite[Lemma 3.1]{opshtein}. We use the latter in our argument as well.





On the obstructive side, we show that removing a $2$-dimensional subset as in Theorem \ref{thmsymplectomorphism} is the best one can do in general:
\begin{theorem}\label{thm:minkowski_obstruct}
Let ${E}$ be a closed subset of $\mathbb{R}^4$ and let $R>1$. Suppose that $B(\pi R^2) \setminus {E}$ symplectically embeds into the cylinder $Z(\pi)$. Then the lower Minkowski dimension of ${E}$ is at least $2$.
\end{theorem}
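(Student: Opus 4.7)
The plan is to refine Gromov's pseudoholomorphic-curve proof of non-squeezing by using Bishop's theorem on the analyticity of certain rectifiable sets to extend the $J$-holomorphic curve across the hole $E$. The only measure-theoretic input I will use is that $\underline{\dim}_M E < 2$ forces $\mathcal H^2(E) = 0$, which follows from the standard chain $\dim_H E \leq \underline{\dim}_M E < 2$ and the fact that vanishing of Hausdorff dimension below $s$ implies vanishing of $\mathcal H^s$.

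First I would set up Gromov's compactification. Since $\Phi(B(\pi R^2) \setminus E)$ is bounded in $Z(\pi) = \mathbb D^2 \times \mathbb R^2$, it embeds symplectically into $M := S^2(\pi + \delta) \times S^2(A)$ for small $\delta > 0$ and some large $A > 0$. Push forward $J_{std}$ via $\Phi$ to an $\omega$-compatible almost complex structure on $\Phi(B(\pi R^2) \setminus E)$, and extend it to an $\omega$-tame structure $\tilde J$ on $M$. Since $E$ has empty interior, one may pick $p \in B(\pi R^2) \setminus E$ with $\|p\|$ arbitrarily small. By the usual degree-one argument for the moduli space of pseudoholomorphic spheres in class $[S^2 \times \{pt\}]$, there exists a $\tilde J$-holomorphic stable map $v$ in this class whose image contains $\Phi(p)$ and has total symplectic area $\pi + \delta$. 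Pulling back via $\Phi$ (which intertwines $J_{std}$ with $\tilde J$ on its image), the set $\Sigma := \Phi^{-1}(\mathrm{image}(v) \cap \Phi(B(\pi R^2) \setminus E))$ is a closed complex analytic subvariety of pure complex dimension one in $B(\pi R^2) \setminus E \subset \mathbb C^2$, containing $p$, with $\mathcal H^2$-area at most $\pi + \delta$.

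The main obstacle is to extend $\Sigma$ to a complex analytic subvariety of the full ball $B(\pi R^2)$. For this I would invoke Bishop's 1964 theorem on the characterization of analytic sets: a closed subset of a complex manifold with locally finite $\mathcal H^{2k}$-measure whose approximate tangent space is a complex $k$-plane at $\mathcal H^{2k}$-almost every point is a $k$-dimensional complex analytic subvariety. In our setting, the closure $\overline\Sigma \subset B(\pi R^2)$ satisfies these hypotheses, because $\overline\Sigma \setminus \Sigma \subset E$ has $\mathcal H^2$-measure zero; hence $\mathcal H^2(\overline\Sigma) \leq \pi + \delta$ stays finite, and at $\mathcal H^2$-almost every point of $\overline\Sigma$ the approximate tangent is the honest complex tangent line of the analytic set $\Sigma$. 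Bishop's theorem then yields that $\overline\Sigma$ is a complex analytic curve in $B(\pi R^2)$ passing through $p$. To conclude, complex monotonicity (the Lelong inequality at $p$, or equivalently the monotonicity formula for minimal surfaces applied to the area-minimizing complex curve $\overline\Sigma$) gives $\mathcal H^2(\overline\Sigma) \geq \pi(R - \|p\|)^2$. Combined with the upper bound $\mathcal H^2(\overline\Sigma) \leq \pi + \delta$ and then letting $\|p\| \to 0$ and $\delta \to 0$, one obtains $R \leq 1$, contradicting $R > 1$.
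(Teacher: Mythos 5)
Your approach is genuinely different from the paper's. The paper never tries to extend the holomorphic curve across $E$: instead it works with a curve that stops short of $E$ and compares two purely metric estimates, namely the Akopyan--Karasev sharp waist inequality (which forces some fibre of a map $B^4 \to \mathbb{R}^2$ to have a $t$-neighbourhood of volume at least $\pi^2 t^2 - o(t^2)$) against the Heintze--Karcher bound $\Vol_4(V_t(\Sigma)) \leq \pi t^2 \Vol_2(\Sigma)$ for the tube around the minimal part of that fibre. The difference between these two bounds must be soaked up by $N_t(E)$, giving directly the quantitative statement $\underline{\mathcal{M}}_2(E) \geq \pi(R^2-1)$. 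You instead assume $\underline{\dim}_M E < 2$, deduce $\mathcal{H}^2(E)=0$, and feed this into Bishop's removable-singularity theorem for analytic sets to extend the curve to an honest proper analytic curve in $B(\pi R^2)$, after which classical monotonicity gives the contradiction. That is a valid and arguably cleaner route to the qualitative dimension bound; it trades the sharp waist inequality plus Heintze--Karcher for Bishop's extension theorem plus monotonicity, and it yields the (incomparable but also sufficient) refinement $\mathcal{H}^2(E) > 0$ rather than a lower bound on the Minkowski $2$-content. Note in particular that your argument does \emph{not} reproduce the quantitative conclusion $\underline{\mathcal{M}}_2(E) \geq \pi(R^2-1)$, because Bishop's theorem is an all-or-nothing statement at $\mathcal{H}^2 = 0$ and gives no information once $\mathcal{H}^2(E)$ is merely small.

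There is, however, a real gap in the first step. You assert that ``$\Phi(B(\pi R^2) \setminus E)$ is bounded in $Z(\pi)$,'' but this is false in general: $Z(\pi) = \mathbb{D}^2 \times \mathbb{R}^2$ is unbounded, the image has finite $4$-volume but need not be bounded, and pieces of $B(\pi R^2)\setminus E$ accumulating on $E$ can perfectly well be sent off to infinity in the $\mathbb{R}^2$-directions. So you cannot compactify once and for all into $S^2(\pi+\delta)\times S^2(A)$ with a single $A$. The standard fix is precisely what the paper does: fix $R' \in (1,R)$ and $\eta>0$, work on the compact set $\overline{B(\pi R'^2)} \setminus N_\eta(E)$, whose image \emph{is} bounded, compactify, and obtain a proper analytic curve $\Sigma_\eta$ through $p$ in $B(\pi R'^2)\setminus \overline{N_\eta(E)}$ with $\mathcal{H}^2(\Sigma_\eta) \leq \pi + \delta$. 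But now you only have a curve away from a positive-measure neighbourhood of $E$, and Bishop's extension theorem does not apply across $\overline{N_\eta(E)}$. You must therefore take a limit $\eta \to 0$, invoking Bishop's sequential compactness theorem for analytic sets of locally bounded area (or a version of Gromov compactness), together with upper semicontinuity of Lelong numbers to ensure the limit curve still passes through $p$, before you are in a position to apply the extension theorem to the limit. This limiting step is a genuine additional ingredient, not a cosmetic detail, and it is entirely absent from your proposal. Once it is supplied, the rest of the argument (Bishop extension across a set of $\mathcal{H}^2$-measure zero, Wirtinger to convert symplectic area to $\mathcal{H}^2$-area, monotonicity at $p$, and letting $\delta \to 0$, $\lvert p\rvert \to 0$, $R' \to R$) is correct.
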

For the proof of Theorem \ref{thm:minkowski_obstruct}, we build on Gromov's original non-squeezing argument by adding a key new ingredient: the waist inequality, which was also introduced by Gromov \cite{Gro03} (see also Memarian \cite{Mem11}). Crucially, we require the sharp version due to Akopyan-Karasev \cite{AK17}, as well as the Heintze-Karcher \cite{HK78} bound on the volumes of tubes around minimal surfaces, in place of the monotonicity inequality for minimal surfaces. 


\begin{remark}\label{rem-joe} Let $R_{\mathrm{sup}}\in (1,\infty]$ be the supremum of the radii $R$ such that there is codimension $2$ subset of $B(\pi R^2)$ whose complement can symplectically embed into $Z(\pi)$.
In the first version of this article we had conjectured that $R_{\mathrm{sup}}$ should be equal to $\sqrt{2}$. However shortly after its appearance, Jo\'{e} Brendel informed us that using a construction inspired by \cite{HacPro10} he can prove $R_{\mathrm{sup}} \geq \sqrt{3}$. As a consequence, we changed our conjecture to a question, see Section \ref{subsec:largeR}. His construction appears in Appendix \ref{appx} below.

We further remark that in Theorem \ref{thmsymplectomorphism}, we remove a Lagrangian plane. In the construction of Jo\'e Brendel in Appendix \ref{appx} realizing $R_{\mathrm{sup}} \geq \sqrt{3}$, he removes a union of Lagrangians together with a symplectic divisor. In higher dimensions (see e.g. Section \ref{ssec:higher_dims}), this distinction could be interesting. \end{remark}

\subsection{The Lipschitz problem}

Recall that for fixed $L > 1$, we are asking for the smallest volume of the region
$${E}(\Phi) := B(\pi R^2) \setminus \Phi^{-1}(Z(\pi)) = \Phi^{-1}(\R^4\setminus Z(\pi))$$
over all symplectic embeddings $\Phi \colon B(\pi R^2) \hookrightarrow \R^4$ of Lipschitz constant bounded above by $L$. (We note that although we use the letter $L$ for both the Lipschitz constant as well as for the Lagrangian disk of Theorem \ref{thmsymplectomorphism}, there will be no confusion given the context.)

On the obstructive side, we obtain the following as a corollary of the proof of the obstructive bound for the Minkowski question (Theorem \ref{thm:minkowski_obstruct}):

\begin{theorem}
\label{thm:Lip_ob}
Let $R > 1$. Then there exists a constant $c = c(R) > 0$ such that for all constants $L$ and all symplectic embeddings $\Phi: B(\pi R^2)\hookrightarrow \mathbb{R}^4$ with Lipschitz constant at most $L$, we have
$$\Vol_4\big( {E}(\Phi)\big) \geq \frac{c}{L^2}.$$
\end{theorem}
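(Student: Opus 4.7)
The plan is to quantify the proof of Theorem~\ref{thm:minkowski_obstruct} and apply the resulting estimate at a scale $\epsilon$ tied to the Lipschitz constant $L$. I expect the argument behind Theorem~\ref{thm:minkowski_obstruct} to deliver not only the qualitative statement $\dim_M E \geq 2$, but an effective lower bound of the form
$$\Vol_4(N_\epsilon(E)) \geq c(R)\, \epsilon^2,$$
valid for every symplectic embedding $\Psi\colon B(\pi R^2)\setminus E \hookrightarrow Z(\pi)$ and every $\epsilon$ in an admissible range, with $c(R) > 0$ depending only on $R$. This should come from the gap between the Akopyan--Karasev sharp waist inequality (giving the lower bound $\pi^2 R^2 \epsilon^2$ on the $\epsilon$-neighborhood of a distinguished fiber of the projection $\pi_2 \circ \Psi$) and the Heintze--Karcher tube bound (giving an upper bound of order $\pi^2 \epsilon^2$ on the $\epsilon$-neighborhood of the corresponding $J$-holomorphic disk, which is minimal with area $\pi$ in the pullback K\"ahler metric). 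The first step is to extract this quantitative form from the proof, with $c(R) = (R^2-1)\pi^2$.

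Next I would apply this bound to $\Psi := \Phi|_{B(\pi R^2)\setminus E}$. Since $\Phi$ is $L$-Lipschitz and symplectic, $\Phi^{-1}$ is also $L$-Lipschitz, and the function $\rho(p) := 1 - |\pi_1(\Phi(p))|$ is $L$-Lipschitz on $B(\pi R^2)$; it is positive on $B \setminus E$ and nonpositive on $E$, so $N_\epsilon(E) \subseteq \{\rho < L\epsilon\}$. Using volume-preservation of $\Phi$ one obtains
$$\Vol_4(N_\epsilon(E)) \leq \Vol_4\big(\Phi(B) \cap \{|z_1| > 1 - L\epsilon\}\big) = \Vol_4(E) + A(L,\epsilon),$$
where $A(L,\epsilon) := \Vol_4\big(\Phi(B) \cap \{1 - L\epsilon < |z_1| < 1\}\big)$ measures the part of $\Phi(B)$ sitting in the cylindrical boundary layer.

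The step I expect to be the main obstacle is establishing the sharper estimate $A(L,\epsilon) \leq C(R)\, L\,\epsilon$ in the relevant range of $L\epsilon$: a naive bound using only the Lipschitz-controlled diameter of $\Phi(B)$ yields $O(L^3 \epsilon)$, which is too weak. The improvement should come from the symplectic structure: the identity $\int_B \Phi^*(dx_1 \wedge dy_1) \wedge \omega = \Vol_4(B)$ pins down the ``symplectic mass'' of $\pi_1 \circ \Phi$ and, via a coarea argument applied to $|\pi_1 \circ \Phi|$, should deliver the desired linear-in-$L$ bound on $A(L,\epsilon)$.

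Granting that, combining with the Minkowski lower bound yields $c(R)\,\epsilon^2 \leq \Vol_4(E) + C(R)\, L\,\epsilon$ in the admissible range of $\epsilon$. Setting $\epsilon = \eta/L$ for a sufficiently small $\eta = \eta(R) > 0$ then gives
$$\Vol_4(E) \geq \frac{c(R)\,\eta^2}{L^2} - C(R)\,\eta \geq \frac{c(R)\,\eta^2}{2L^2},$$
which is Theorem~\ref{thm:Lip_ob} with $c = c(R)\eta^2/2$.
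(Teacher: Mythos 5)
Your final step contains an arithmetic gap that cannot be patched by sharpening the shell estimate. Plugging $\epsilon = \eta/L$ into $c(R)\,\epsilon^2 \leq \Vol_4(E) + C(R)\,L\,\epsilon$ gives
$$\Vol_4(E) \geq \frac{c(R)\,\eta^2}{L^2} - C(R)\,\eta,$$
and the subtracted term is a \emph{constant} in $L$, not $O(1/L^2)$; for any fixed $\eta$ it dominates the first term as soon as $L$ is large, so the asserted conclusion $\Vol_4(E) \geq c(R)\eta^2/(2L^2)$ does not follow. Nor can a better bound on $A$ rescue this: $A(L,\epsilon)$ really is of order $L\epsilon$ (already for $\Phi = \mathrm{id}$ and $L=1$ one has $A(1,\epsilon) \approx 2\pi^2(R^2-1)\epsilon$), and at the scale $\epsilon \sim 1/L$ this is $\Theta(1)$, which swamps the $\Theta(1/L^2)$ quantity you are after. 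The structural reason for the failure is that you are thickening the very set $E(\Phi,1)$ whose volume you are trying to estimate, so the ``extra'' mass $\Vol_4(N_\epsilon(E)) - \Vol_4(E)$ must be subtracted back out, and it is not small. (A secondary issue: the coarea step runs the wrong way. The Lipschitz bound gives $|\nabla(|\pi_1\circ\Phi|)| \leq L$, which yields a \emph{lower} bound on the volume of the sublevel shell; an \emph{upper} bound would require control of $|\nabla u|$ from below, which the hypothesis does not provide.)

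The paper sidesteps the shell entirely by applying Theorem~\ref{thm:obstruct} to a \emph{different} defect and a \emph{different} cylinder. Fix $\delta \in (0, R-1)$, set $E' := E(\Phi,1+\delta)$, and observe that $B(\pi R^2)\setminus E'$ symplectically embeds into $Z(\pi(1+\delta)^2)$. The Lipschitz bound then gives the clean inclusion
$$N_{\delta/L}(E') \subset E(\Phi,1),$$
so the lower bound on $\Vol_4\bigl(N_{\delta/L}(E')\bigr)$ furnished by Theorem~\ref{thm:obstruct} at $t = \delta/L$ lower-bounds $\Vol_4(E(\Phi,1))$ directly, with no correction term. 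Separating the radius of the set you thicken ($1+\delta$) from the radius of the set you measure ($1$) is exactly what makes the estimate close; with that change your overall plan works, and indeed recovers the constant $c(R) \sim R^4$ noted in Remark~\ref{rmk:constant}.
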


\
It is worth noting that one may use the standard non-squeezing theorem alone to find a weaker quantitative obstructive bound of $c/L^3$ as follows. Suppose we had an $L$-Lipschitz symplectic embedding $\phi \colon B(\pi R^2) \hookrightarrow \R^{4}$ for $R > 2$. 
%
%
Then by Gromov's non-squeezing theorem and the Lipschitz condition, one can check that there is a ball of radius of order $1/L$ embedded inside ${E}(\phi)$. Hence,
$$\mathrm{Vol}(E(\phi)) \gtrsim 1/L^4.$$
With a little more effort, one may find order $L$ many disjoint such balls inside ${E}(\phi)$, yielding the obstructive bound $c/L^3$.
However, jumping from $c/L^3$ to our obstructive bound of $c/L^2$ appears to require a new tool, which in our case is Gromov's waist inequality.

On the constructive side, we adapt Katok's ideas in \cite{Kat73} to prove the following:

\begin{theorem} \label{thm:Lip_con} Let $R > 1$. Then there exists a constant $C = C(R) > 0$ such that for all constants $L$, there exists a symplectic embedding $\Phi \colon B^4(R) \hookrightarrow \R^4$ with Lipschitz constant at most $L$ such that
$$\Vol_4\big({E}(\Phi)\big) \leq  \frac{C}{L}.$$
\end{theorem}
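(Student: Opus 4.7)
The plan is to adapt Katok's argument from Theorem~\ref{thm:Katok} quantitatively, constructing for each integer $N \geq 1$ an explicit symplectic embedding $\Phi_N \colon B(\pi R^2) \hookrightarrow \R^4$ with Lipschitz constant $\leq C_1(R)\,N$ and excluded volume $\Vol_4(E(\Phi_N)) \leq C_2(R)/N$. The theorem then follows by setting $N \asymp L$ and absorbing constants into $C = C(R)$.

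The basic strategy partitions the $(p_1, q_1)$-projection of the ball, $B^2(\pi R^2) \subset \R^2_{(p_1, q_1)}$ (of area $\pi R^2$), into $M = M(R)$ regions $A_i$ of area at most $\pi$, each equipped with a translation $\tau_i \in \R^2$ sending $A_i$ into $B^2(\pi)$. These translations are lifted to $\R^4$ by applying different $\tau_i$ over different strips $V_i \subset \R^2_{(p_2, q_2)}$, with buffer zones of width $\sim 1/N$ between adjacent strips. Over the buffers, smooth interpolation is implemented by Hamiltonian flows generated by Hamiltonians of the form $H = c(p_2, q_2)\, q_1$ (linear in $(p_1, q_1)$, with slowly varying $(p_2, q_2)$-coefficient $c$ interpolating between the $\tau_i$'s), yielding symplectic maps that shift $(p_1, q_1)$ by a $(p_2, q_2)$-dependent amount with a compensating shift in $(p_2, q_2)$.

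The excluded volume $E(\Phi_N)$ is concentrated in the preimages of the buffer zones. Each 2-dimensional buffer zone has area $O(1/N)$ in the $(p_2, q_2)$-plane (width $1/N$ times length $O_R(1)$), and its preimage in $B(\pi R^2)$ has 4-volume bounded by this area times the maximal fiber area $\pi R^2$; summing over the $O_R(1)$ buffer zones yields $\Vol_4(E(\Phi_N)) = O_R(1/N)$, as required.

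The main obstacle is the Lipschitz bound. A naive interpolation of a translation of amplitude $O(R)$ over a buffer of width $1/N$ produces Lipschitz constant of order $R N^2$---one power of $N$ too large---because the symplectic compensation in $(p_2, q_2)$ includes a term of size $|\nabla^2 c|\cdot |q_1| = O(N^2 R)$. Achieving the claimed $O(N)$ scaling requires a more delicate construction: decomposing each large shift into many shorter sub-shifts, choosing the partition $\{V_i\}$ to minimize the required amplitudes, or employing a refined family of generating functions that avoid the $q_1$-dependent compensation term. Verifying that such a refinement yields $O(N)$ Lipschitz constant while preserving the $O_R(1/N)$ volume bound is the technical heart of the argument, and is where Katok's construction enters in its full strength.
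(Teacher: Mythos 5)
You have correctly identified both the Katok-style strategy and, more importantly, the precise obstacle that makes a naive quantitative version fail: interpolating a translation of amplitude of order $R$ across a buffer of width $1/N$ forces the second derivative of the interpolating function to be of order $N^2$, and the symplectic compensation term (proportional to the conjugate coordinate, itself of order $R$) then contributes a Lipschitz constant of order $N^2$ rather than $N$. This diagnosis is accurate, but your proposal stops there. The fixes you mention in passing --- subdividing the shift, reordering the translations, or switching to generating functions --- do not resolve it: as long as the total width of the transition regions is kept at $O(1/N)$ (which is what controls the excluded volume), interpolating a net displacement of order $R$ within them forces second derivatives of order $N^2$ regardless of how the transition is subdivided or ordered, and the compensating shear will still be multiplied by a coordinate of macroscopic size.

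The paper's construction resolves this via a two-stage maneuver --- a simplified form of symplectic folding --- that is absent from your sketch. After dividing the domain into a grid of boxes separated by thin walls of width $1/L$, the paper \emph{first} applies an area-preserving ``taffy-stretching'' map in each two-dimensional symplectic factor, which stretches each thin wall of width $1/L$ into a gap of width slightly larger than $1$. This stretching step alone carries essentially all of the required Lipschitz constant, of order $L$, and incurs zero volume loss. Only \emph{then} does the construction perform the Hamiltonian slides (with $H = -\rho(x_1)x_2$, $\|\rho'\|_\infty, \|\rho''\|_\infty \in O(1)$) that rearrange the boxes into $Z(\pi)$; because these slides now interpolate over gaps of width $O(1)$ rather than $O(1/L)$, their Lipschitz constant is $O(R^2)$, independent of $L$. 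Composing gives an $O_R(L)$-Lipschitz embedding with excluded volume $O_R(1/L)$. The essential point is that the $L$-dependence is concentrated in the stretching step and entirely decoupled from the sliding step. Without this stretch-then-slide decomposition, the Lipschitz gap you identified remains open, so your proposal is incomplete at precisely the step you yourself flag as the technical heart.
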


\begin{remark} \label{remark:schlenk}
	As was pointed out to us by Felix Schlenk, our construction is a simplified version of multiple symplectic folding \cite[Sections 3 and 4]{Schlenk}.
\end{remark}

One would obviously like to push the obstructive and constructive bounds together. 

\subsection*{Organization of the paper}
We start by recalling some definitions and known theorems in Section \ref{sectionprelim}, which are then applied in Section \ref{sec:obstruction} to prove an obstructive bound implying Theorem \ref{thm:minkowski_obstruct}. In Section \ref{sec:construction} we construct the symplectomorphism of Theorem \ref{thmsymplectomorphism}. The Lipschitz problem, including Theorems \ref{thm:Lip_ob} and \ref{thm:Lip_con}, are discussed in Section \ref{sec:Lipschitz}.  We also list several related questions in the final Section \ref{sec:questions}. In Appendix \ref{appx}, written by Jo\'e Brendel, the construction mentioned in Remark \ref{rem-joe} appears.

\subsection*{Acknowledgments}
We thank Michael Usher for a useful e-mail correspondence. We also thank Felix Schlenk and Jo\'{e} Brendel for their interest and helpful comments our paper. K.S. thanks Larry Guth for originally suggesting the version of this problem involving Lipschitz constants which provided the initial impetus for this project. U.V. thanks Grigory Mikhalkin for very useful discussions on Theorem \ref{thmsymplectomorphism} and also sketching a proof of Corollary \ref{cornonextend} that we ended up not using; and Kyler Siegel for a discussion regarding Section \ref{sscapquestion}.

K.S. was partially supported by the National Science Foundation under grant DMS-1547145. This research was conducted during the period A.S. served as a Clay Research Fellow. J.Z. was supported in part by the National Science Foundation under grant DMS-1802984 and the Australian Research Council under grant FL150100126.

\section{Preliminaries} \label{sectionprelim}

We use the usual asymptotic notation $f\in O(g)$ to mean $|f| \leq Cg$ for some constant $C$, and $f\in o(g)$ to mean that $\lim_{t\to 0} \frac{f(t)}{g(t)} =0$. We write $f\in \Theta(g)$ if $f\in O(g)$ and $g\in O(f)$.

Let $S\subset \mathbb{R}^n$ be any bounded subset. Let $N_t(S)$ denote the open $t$-neighbourhood of $S$ with respect to the standard metric. If $\Sigma$ is a compact submanifold (possibly with boundary), let $V_t(\Sigma)$ be the exponential $t$-tube of $\Sigma$, i.e. the image by the normal exponential map of the open $t$-neighbourhood of the zero section in the normal bundle of $\Sigma$ (which is endowed with the natural metric). 

We denote by $\Vol_n$ the Euclidean $n$-volume of a set and set $\alpha_l = \frac{\pi^{l/2}}{\Gamma(\frac{l}{2}+1)}$. Note that when $n$ is a natural number, $\alpha_n = \Vol_n(B^n)$ is precisely the Euclidean volume of the unit $n$-dimensional ball $B^n \subset\mathbb{R}^n$. For $s\geq 0$, the $s$-dimensional lower Minkowski content of $S$ is defined as 
$${\underline{\mathcal{M}}_{s}(S)} := \liminf_{t\to 0^+} \frac{\Vol_n(N_t(S))}{\alpha_{n-s} t^{n-s}}.$$

Note that the normalization is chosen so that if $\Sigma^k \subset\mathbb{R}^n$ is a closed $k$-dimensional submanifold, then $\underline{\mathcal{M}}_k(\Sigma) = \Vol_k(\Sigma)$ coincides with the Euclidean $k$-volume of $\Sigma$.

The lower Minkowski dimension of $S$ is defined as 
\begin{align*}
\underline{\dim}_{\mathcal{M}}(S) & : = \inf\{s>0; \quad {\underline{\mathcal{M}}_{s}(S)} =0  \}\\ & = \sup \{s\geq 0 ; \quad {\underline{\mathcal{M}}_{s}(S)} >0\}.
\end{align*}
There are similar notions of upper Minkowski dimension and upper Minkowski content, which we will not need in this paper since a lower bound of the lower Minkowski content implies by definition the same lower bound for the upper Minkowski content. There are also equivalent definitions using ball packings. Replacing $S$ by its closure $\bar{S}$ does not change the Minkowski upper/lower dimensions.

\subsection{Waist inequalities}

The waist inequality for round spheres proved by Gromov \cite{Gro03} and with more details by Memarian \cite{Mem11} was extended to the case of maps from Euclidean balls by Akopyan-Karasev \cite[Theorem 1]{AK17}. The proof of the latter immediately implies the following:
\begin{theorem}[Waist inequality]
\label{waist} 
For any positive integers $n,k$, there exists a continuous function $h_{n,k}:(0,\infty) \to \mathbb{R}$, such that $h_{n,k} \in o(t^k)$ and the following holds: for any continuous map $f:B^n \to \mathbb{R}^k$, there exists $y\in \mathbb{R}^k$ such that 
\begin{equation}
\label{eq:waist}
\Vol_n\left(N_t(f^{-1}(y))
 { \cap B^n } \right) \geq \alpha_{n-k} \alpha_k t^k - h_{n,k}(t).
\end{equation}
\end{theorem}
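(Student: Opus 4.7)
The plan is to invoke the sharp waist inequality of Akopyan--Karasev \cite[Theorem 1]{AK17} as the sole ``hard'' input, and then perform a short volume computation to extract the explicit leading constant and the error term. Concretely, \cite[Theorem 1]{AK17} produces, for any continuous $f\colon B^n \to \mathbb{R}^k$ and any $t > 0$, a point $y \in \mathbb{R}^k$ (possibly depending on $t$) such that
$$
\Vol_n\bigl(N_t(f^{-1}(y)) \cap B^n\bigr) \;\geq\; \Vol_n\bigl(N_t(D) \cap B^n\bigr),
$$
where $D := B^n \cap (\mathbb{R}^{n-k}\times\{0\})$ denotes the equatorial $(n-k)$-disk. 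It then suffices to verify that the right-hand side has the form $\alpha_{n-k}\alpha_k t^k - h_{n,k}(t)$ for some continuous $h_{n,k}\in o(t^k)$.

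To carry out this estimate, write points of $\mathbb{R}^n$ as $(u,v) \in \mathbb{R}^{n-k}\times\mathbb{R}^k$. The tube $N_t(D)$ splits into a cylindrical part $\{|u| < 1,\ |v| < t\}$ and a half-spherical rim $\{|u| \geq 1,\ (|u|-1)^2 + |v|^2 < t^2\}$; since $|u|\geq 1$ forces $|u|^2+|v|^2\geq 1$, the rim contributes nothing to the intersection with the open unit ball. By Fubini,
$$
\Vol_n(N_t(D)\cap B^n) \;=\; \alpha_{n-k}\int_{|v| < \min(t,1)} (1-|v|^2)^{(n-k)/2}\,dv.
$$
Expanding $(1-|v|^2)^{(n-k)/2} = 1 + O(|v|^2)$ on $\{|v| < t\}$ for $t$ small shows the right-hand side equals $\alpha_{n-k}\alpha_k t^k - O(t^{k+2})$. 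Setting
$$h_{n,k}(t) := \alpha_{n-k}\alpha_k t^k - \Vol_n(N_t(D) \cap B^n)$$
then yields a continuous function on $(0,\infty)$ that is $O(t^{k+2})$, and hence $o(t^k)$, as $t\to 0^+$; for $t$ not small the desired inequality \eqref{eq:waist} becomes vacuous once $\alpha_{n-k}\alpha_k t^k \geq \Vol_n(B^n)$, so no further work is needed there.

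The main obstacle is not the above reduction, which is elementary, but the black-box appeal to \cite[Theorem 1]{AK17} itself. Its proof is a delicate parametric/equivariant topological argument in the spirit of Gromov's original waist theorem \cite{Gro03}; unpacking it is the genuinely difficult part of the story, and it is precisely this sharp form, as opposed to the weaker monotonicity-type fiber bounds, that will be crucial for the obstructive estimate in Theorem~\ref{thm:minkowski_obstruct}.
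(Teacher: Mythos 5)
Your volume computation of $\Vol_n(N_t(D)\cap B^n)$ is correct and does give a continuous $h_{n,k}\in O(t^{k+2})$, and the strategy of reducing the error term to one explicit model computation is also the one the paper has in mind. The gap is in the black-box step: you misquote \cite[Theorem 1]{AK17}. As stated, the Akopyan--Karasev inequality compares the \emph{Euclidean} $\varepsilon$-neighbourhoods, giving $\Vol_n(N_\varepsilon(f^{-1}(y))) \geq \Vol_n(N_\varepsilon(D))$ with no intersection with $B^n$ on either side, and neither this nor the intersected inequality you invoked implies the other. The intersected version is the one needed here (the covering claim (\ref{eq:mink-cover}) in the proof of Theorem~\ref{thm:obstruct} takes place entirely inside the ball), and crude subtraction fails: $\Vol_n(N_t(f^{-1}(y))\setminus B^n) \leq \Vol_n(N_t(B^n)\setminus B^n) = \alpha_n\left((1+t)^n-1\right)$, which is of order $t$ and swamps the leading term $\alpha_{n-k}\alpha_k t^k$ for every $k\geq 2$; the bare statement does not preclude the good fibre from accumulating near $\partial B^n$.

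There are two repairs. A soft one, still using only the statement of \cite[Theorem 1]{AK17}, is to apply it to the restriction of $f$ to the closed ball of radius $1-t$: the resulting tube is then automatically contained in $B^n$, and the rescaled lower bound $(1-t)^{n-k}\alpha_{n-k}\alpha_k t^k$ gives $h_{n,k}\in O(t^{k+1})$, which is $o(t^k)$, at the cost of $y$ depending on $t$. What the paper actually invokes is the \emph{proof} of \cite[Theorem 1]{AK17}, not its statement: that proof composes $f$ with the measure-preserving, $1$-Lipschitz Archimedes projection $S^{n+1}\to\overline{B^n}$ and applies Gromov's spherical waist inequality, and its penultimate inequality compares the relevant volume to that of a $t$-tube about an equatorial $S^{n+1-k}\subset S^{n+1}$. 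Since the projection is $1$-Lipschitz onto $\overline{B^n}$, the image of that spherical tube lies inside $N_t(f^{-1}(y))\cap \overline{B^n}$ for free; the spherical tube is manifestly $f$-independent; and the analogue of your calculation gives the sharper $h_{n,k}\in O(t^{k+2})$. Your outline is right, but the correct black-box is the spherical comparison inside AK's proof (or the shrinking trick), not the bare statement of their Theorem~1.
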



It will be useful for our application that the above estimate is uniform in $f$. This uniform estimate is indeed implied by the proof of \cite[Theorem 1]{AK17} as they compare the $t$-neighbourhood of a fibre to the $t$-neighbourhood of an equatorial unit sphere $S^{n+1-k}\subset S^{n+1}$ (cf. the second last equation of their proof, with correct normalisation). The latter is independent of $f$, and by explicit calculation one may verify that actually $h_{n,k} \in O( t^{k+2})$. 

We remark that the waist inequality for spheres \cite{Gro03, Mem11} describes a stronger property than the above statement, since it gives optimal bounds on all (not just small) neighborhoods of the big fiber. 

\subsection{Tubes around minimal submanifolds}

The Heintze-Karcher inequality \cite{HK78} estimates the volume of tubes around compact submanifolds. We need the case of minimal submanifolds in Euclidean space (covered by \cite[Theorem 2.3]{HK78} with $\delta=0$ and Remark 2 on page 453 in \cite{HK78}), which may be stated as follows:
\begin{theorem}[Heintze-Karcher inequality] \label{hk} 
For any positive integers $n,k$ and any smooth compact $k$-dimensional minimal submanifold $\Sigma^k\subset \mathbb{R}^n$ with boundary, for $t>0$ we have
\begin{equation}
\label{eq:hk}
\Vol_n(V_t(\Sigma)) \leq  \Vol_k(\Sigma) \alpha_{n-k} t^{n-k}.
\end{equation}
\end{theorem}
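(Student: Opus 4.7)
The plan is to parametrize $V_t(\Sigma)$ by the normal exponential map and bound the resulting Jacobian using minimality via an AM--GM inequality on the principal curvatures.

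First I would compute the Jacobian of $\exp \colon N\Sigma \to \mathbb{R}^n$, $(p,v)\mapsto p+v$. Working in orthonormal frames of $T_p\Sigma$ and $N_p\Sigma$, the differential $D\exp$ at $(p,v)$ is block lower triangular: the top-left block is $I - A_v$ on $T_p\Sigma$ (with $A_v$ the Weingarten operator in direction $v$), the bottom-right block is the identity on $N_p\Sigma$, and the normal-connection terms occupy the bottom-left block and so do not contribute to the determinant. Hence $|J\exp(p,v)| = |\det(I - A_v)|$. Since $V_t(\Sigma) = \exp(N^{<t}\Sigma)$ by definition, the area formula yields
\[
\Vol_n(V_t(\Sigma)) \leq \int_{N^{<t}\Sigma} |J\exp|\, dV.
\]

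Next I would restrict the integration to the pre-focal region $U \subseteq N^{<t}\Sigma$ on which $I - A_v$ is positive definite, via a closest-point argument: for each $x \in V_t(\Sigma)$ the shortest segment to $\Sigma$ is minimizing and hence cannot pass a focal point. The main technical obstacle is to handle the boundary of $\Sigma$ carefully: when the closest point of $x$ lies on $\partial \Sigma$, the displacement $x - p$ need not be normal, and one must account for these contributions separately (as done in \cite{HK78} via cutoffs at focal loci and a collar decomposition near $\partial\Sigma$).

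Finally, minimality enters through AM--GM. Passing to polar coordinates $v = sv'$ with $|v'|=1$, so that $dv = s^{n-k-1}\,ds\,d\sigma(v')$ and $A_v = sA_{v'}$, the minimality condition $\mathrm{tr}(A_{v'}) = 0$ means the principal curvatures $\lambda_1(v'),\dots,\lambda_k(v')$ sum to zero. On $U$ all factors $1 - s\lambda_i(v')$ are positive, so
\[
\det(I - sA_{v'}) = \prod_{i=1}^k (1 - s\lambda_i(v')) \leq \left(\frac{1}{k}\sum_{i=1}^k (1 - s\lambda_i(v'))\right)^k = 1.
\]
Using this bound together with $\Vol(S^{n-k-1}) = (n-k)\alpha_{n-k}$, the estimate collapses to
\begin{align*}
\Vol_n(V_t(\Sigma)) &\leq \int_\Sigma \int_{S^{n-k-1}} \int_0^t s^{n-k-1}\, ds\, d\sigma(v')\, dV_\Sigma(p) \\
&= \Vol_k(\Sigma)\, \alpha_{n-k}\, t^{n-k},
\end{align*}
which is the claimed inequality.
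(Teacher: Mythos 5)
Your argument reconstructs the standard Heintze--Karcher proof, which is precisely what the paper cites (to \cite[Theorem 2.3]{HK78} with $\delta=0$ together with Remark~2) rather than reproves: the block-lower-triangular form of $D\exp$ giving $|J\exp| = |\det(I - A_v)|$, and the AM--GM bound $\prod_i(1 - s\lambda_i) \le 1$ on the pre-focal region via $\sum_i \lambda_i = 0$, are both correct, as is the final polar-coordinate computation.

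The boundary issue you flag, however, is a genuine gap rather than a formality, so it is worth spelling out why. The containment $V_t(\Sigma)\subseteq \exp(U)$, with $U$ the pre-focal part of $N^{<t}\Sigma$, can actually fail once $\partial\Sigma \ne \emptyset$. For instance, let $\Sigma$ be a small disk of radius $\delta$ in a minimal surface through $0$ whose Weingarten operator $A_{e_3}$ at the origin has eigenvalues $\pm 1$, and take $x = t'e_3$ with $1 < t' < t$. Then $(0,t'e_3)\in N^{<t}\Sigma$, so $x\in V_t(\Sigma)$; but for small $\delta$ the origin is the only foot point of $x$ on $\Sigma$ and it is past-focal, since $\det(I - t'A_{e_3}) = 1-(t')^2 < 0$. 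Meanwhile the minimizer of $p \mapsto |x-p|$ over $\Sigma$ lies on $\partial\Sigma$ with non-normal displacement, because the Hessian $2(I - t'A_{e_3}) = 2\,\mathrm{diag}(1-t',\,1+t')$ at the origin is indefinite. So $x \notin \exp(U)$, and the closest-point argument alone does not reduce the integral to $U$; the cut-locus and boundary-collar analysis you outsource to \cite{HK78} is genuinely carrying weight at this step. Since the paper itself cites \cite{HK78} for the whole theorem and gives no proof, your deference is consistent with its treatment, but a self-contained proof would need to supply that analysis (for example by partitioning $N^{<t}\Sigma$ along the cut locus and treating the collar near $\partial\Sigma$ separately, as Heintze--Karcher do).
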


Theorem \ref{hk} is again a uniform estimate on the exponential $t$-neighbourhood; one may compare it to the statement that the (upper) Minkowski $k$-content of a closed submanifold $\Sigma^k\subset \mathbb{R}^n$ is $\Vol_k(\Sigma)$. The point of Theorem \ref{hk} is that the constant in this estimate does not depend on the minimal $k$-submanifold $\Sigma$ or $t$. The main ingredient for its proof is an estimate for the Jacobian determinant of the normal exponential, which in Euclidean space is 1 to lowest order. The following term is controlled by the mean curvature $\mathbf{H}$, which one may expect from the interpretation of mean curvature as the first variation of ($k$-) area. Consequently, for a general compact submanifold $\Sigma$, (\ref{eq:hk}) will have an error term of order $c (\max_\Sigma |\mathbf{H}|) t^{n-k+1}$. 

\subsection{Gromov foliation and maps into the cylinder}

For $r<R$, recall that $B(\pi R^2)$ and $Z(\pi r^2)$ denote respectively the open ball and open cylinder of radius $r,R$ in $\mathbb{R}^4$.
In this subsection, we give a slight modification of the holomorphic foliation argument of Gromov \cite{Gro85} in dimension $n=4$:
%

\begin{proposition} \label{foliation!2}
Let $R,r>0$. Let $E$ be a compact subset of $\mathbb{R}^4$ and let $\phi:B(\pi R^2) \setminus E \to \mathbb{R}^4$ be a smooth symplectic embedding into the cylinder $Z(\pi r^2)$. Let $U$ be the closure of an open neighbourhood of $\partial B(\pi R^2) \cup E$ in $\mathbb{R}^4$. 
Then there exists a smooth map $f: B(\pi R^2)\setminus U\to \mathbb{R}^2$ such that: 
\begin{itemize}
\item $f$ has no critical points on $B(\pi R^2)\setminus U$;
\item for all $y\in\mathbb{R}^2$, if $f^{-1}(y)\cap B(\pi R^2)\setminus U$ is non-empty, then it is a two dimensional complex submanifold of Euclidean area less than $\pi r^2$. 
\end{itemize}
\end{proposition}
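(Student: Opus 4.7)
The plan is to adapt Gromov's holomorphic foliation argument \cite{Gro85} to this setting where the symplectic embedding $\phi$ is defined only on the complement of the closed set $E$. More precisely, I would first pick $L>0$ so large that the compact set $\phi(\overline{B(\pi R^2)\setminus U})\subset B^2(\pi r^2)\times\R^2$ has its second coordinate contained in the open square $(-L/2,L/2)^2$, and then compactify the target cylinder by viewing it as an open subset of $\hat Z:=S^2\times T^2$, where $S^2$ is a 2-sphere of total symplectic area $\pi r^2$ containing $B^2(\pi r^2)$ as an open symplectic disk, and $T^2:=(\R/L\Z)^2$. Next, I would choose an $\omega$-tame almost complex structure $J$ on $\hat Z$ that agrees with $\phi_\ast J_0$ on an open neighbourhood of $\phi(\overline{B(\pi R^2)\setminus U})$, where $J_0$ is the standard complex structure on $\R^4$; such a $J$ exists by a standard partition-of-unity argument, using convexity of the space of $\omega$-tame almost complex structures.

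Gromov's existence theorem applied to the class $A:=[S^2\times \pt]\in H_2(\hat Z;\Z)$ then provides a smooth foliation of $\hat Z$ by $J$-holomorphic spheres, each of symplectic area $\omega(A)=\pi r^2$, and an induced smooth fibration $\hat Z\to T^2$ onto the leaf space. In order to produce a map to $\R^2$ rather than to $T^2$, I would pass to the universal cover: the foliation lifts to a smooth submersion $\tilde\pi:\tilde Z:=S^2\times\R^2\to\R^2$ whose fibers are the lifted $\tilde J$-holomorphic spheres, and $\phi$ lifts canonically to $\tilde\phi:B(\pi R^2)\setminus U\to\tilde Z$ via the open inclusion $B^2(\pi r^2)\times\R^2\hookrightarrow S^2\times\R^2$. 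I would then define
\[
f:=\tilde\pi\circ\tilde\phi:B(\pi R^2)\setminus U\longrightarrow\R^2.
\]

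To verify the required properties: $f$ is the composition of an embedding and a submersion, hence itself a submersion with no critical points. Each nonempty fiber $f^{-1}(y)$ is, via $\tilde\phi$, the intersection of a $\tilde J$-holomorphic sphere with $\tilde\phi(B(\pi R^2)\setminus U)$; since $\tilde J=\tilde\phi_\ast J_0$ on this image, pulling back through $\tilde\phi$ gives a 2-dimensional $J_0$-complex submanifold of $B(\pi R^2)\setminus U$. By Wirtinger's identity, its Euclidean area equals its symplectic area, which in turn equals the $\omega$-area of the corresponding piece of the lifted sphere, and so is at most the total symplectic area $\pi r^2$. The main subtle point I expect is the strictness of this bound: one needs to rule out that an entire $\tilde J$-holomorphic leaf lies inside $\tilde\phi(B(\pi R^2)\setminus U)\subset B^2(\pi r^2)\times\R^2$. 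But this subset is contractible, so $\omega$ restricts there to an exact form, and then a closed symplectic sphere contained in it would have zero symplectic area, contradicting area $=\pi r^2>0$. Apart from this, the main technical input is Gromov's foliation theorem itself, which I would use as a black box.
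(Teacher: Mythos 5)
Your proposal is correct and follows essentially the same route as the paper's proof: compactify the cylinder to $S^2 \times T^2$, choose a tame/compatible almost complex structure equal to $\phi_* J_0$ on the image, invoke Gromov's foliation by holomorphic spheres in the class $[S^2 \times \pt]$, project to the leaf space, and lift to $\R^2$. The only cosmetic differences are that you pass to the universal cover of $S^2 \times T^2$ rather than lifting $h\circ\phi$ directly (the paper uses contractibility of the cylinder to lift; you get the same outcome), you take $S^2$ of area exactly $\pi r^2$ containing the open disk as the complement of a point rather than enlarging the sphere slightly, and you justify strictness of the area bound via exactness of $\omega$ on the contractible cylinder rather than via the observation that each leaf must also pass through $\{\infty\}\times T^2$ which lies outside $\phi$'s image. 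All of these are equivalent minor variants of the same argument.
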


\begin{proof}
The following argument is standard in the symplectic community. As mentioned before the statement of the proposition, the ideas are due to Gromov \cite{Gro85}, though more thorough analytic details may be found elsewhere, e.g. \cite{MS12}.

We define $A:=\pi r^2$ for brevity. Since $B(\pi R^2) \setminus U$ has compact closure in $B(\pi R^2) \setminus E$, the image of $B(\pi R^2) \setminus U$ under $\phi$ lands in $B^2(\pi r_0^2) \times [-K,K]^2$ for some large constant $K$ and $0<r_0<r$ (possibly depending upon $U$). Let $S^2(A)$ denote the 2-sphere with standard symplectic form scaled to have total area $A$, and let $T^2_K = (\R/4K\Z)^2$ be the 2-torus with symplectic form induced by the standard form on $\R^2$. Then we have a symplectic embedding $B^2(\pi r_0^2) \times [-K,K]^2 \subset S^2(A) \times T^2_K$, and upon composing with $\phi$, we arrive at a symplectic embedding, also denoted $\phi$ (by abuse of notation),
$$\phi \colon B(\pi R^2) \setminus U \hookrightarrow S^2(A) \times T^2_K.$$

Let $J_0$ denote the standard complex structure on $B(\pi R^2)$, and let $J_1$ denote the standard (split) complex structure on $S^2(A) \times T_K^{2}$. We pick a special almost complex structure $J_{\phi}$ on $S^2(A) \times T_K^2$ which incorporates $\phi$ by requiring that it satisfies the following three properties:
\begin{itemize}
	\item On the image of $\phi$, $J_{\phi} = \phi_*(J_0)$
	\item $J_{\phi} = J_1$ in a neighborhood of $\{\infty\} \times T^2_K$
	\item Everywhere, $J_{\phi}$ is compatible with the symplectic form
\end{itemize}

By \cite[Proposition 9.4.4]{MS12}, which encompasses standard 4-dimensional techniques, the evaluation map
$$\mathrm{ev} \colon \scr{M}_{0,1}(\beta,J_{\phi}) \rightarrow S^2(A) \times T^2_K$$
is a diffeomorphism, where $\scr{M}_{0,1}(\beta,J_{\phi})$ is the moduli space  of $J_{\phi}$-holomorphic spheres with one marked point and in the class $\beta$. Meanwhile, the map which forgets the marked point $\tau \colon \scr{M}_{0,1}(\beta,J_\phi) \rightarrow \scr{M}_{0,0}(\beta,J_\phi)$ is a smooth fibration, with fibers diffeomorphic to $S^2$. By positivity of intersections (see \cite[Theorem 2.6.3]{MS12}), each fiber of $\tau$, which is a $J_{\phi}$-holomorphic sphere, intersects $\{\infty\} \times T^2_K$ once and transversely, so by the implicit function theorem, we have a canonical diffeomorphism $g$ of $T^2_K$ with $\scr{M}_{0,0}(\beta,J_\phi)$. We therefore obtain a map $h \colon S^2(A) \times T^2_K \rightarrow T^2_K$ by setting $h(p) = x$ when the unique sphere through $p$ passes through $(\infty,x)$. It is clear by construction that the diagram \begin{center}
\begin{tikzcd}
\scr{M}_{0,1}(\beta,J_{\phi})\arrow{d}{\mathrm{for}}\arrow{r}{\mathrm{ev}}&S^2(A) \times T^2_K\arrow{d}{h}\\
\scr{M}_{0,0}(\beta,J_{\phi})\arrow{r}{g}& T^2_K
\end{tikzcd}
\end{center}commutes. In particular since the forgetful map is a smooth $S^2$ fiber bundle, $h$ is smooth and has no critical points.

Notice that since the image of $B(\pi R^2) \setminus U$ under $\phi$ is contained in a contractible subset of $S^2(A) \times T^2_K$, we have that the composition $h \circ \phi \colon B(\pi R^2) \setminus U \rightarrow T^2_K$ lifts to a map $f \colon B(\pi R^2) \setminus U \rightarrow \R^2$. This is the function $f$ we desired in the statement of the proposition, and we must now check it satisfies both of the desired properties.

The fact that $f^{-1}(y)$ is a complex submanifold is simply because it is by definition a subset of a $J_{\phi}$-holomorphic sphere, and $J_{\phi}$ is chosen to equal $\phi_*(J_0)$ on the image of $\phi$. Finally, the area bound comes from the fact that the area of $f^{-1}(y)$ is at most the symplectic area of the corresponding sphere (the one passing through $(\infty,[y])$), which is just $A$ since symplectic area is purely homological.
\end{proof}

\section{A quantitative obstruction to partial symplectic embeddings} \label{sec:obstruction}


As usual, for $r<R$, $B(\pi R^2)$ and $Z(\pi r^2)$ refer to the open ball and open cylinder of radius $r,R$ respectively, in $\mathbb{R}^4$. 
The main estimate of this section is the following obstructive bound: 

\begin{theorem} \label{thm:obstruct}
	Let ${E}$ be a compact subset of $\mathbb{R}^4$ and suppose that $B(\pi R^2) \setminus {E}$ symplectically embeds into the cylinder $Z(\pi r^2) \subset \mathbb{R}^4$. Then there is a function $k_{R} \in o(t^2)$ such that for any $t>0$,
	$$\Vol_4(N_t({E})) \geq \pi^2 (R^2 - r^2)t^2 - k_{R}(t).$$
\end{theorem}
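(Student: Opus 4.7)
The plan combines the three preliminary tools from Section~\ref{sectionprelim}: the Gromov holomorphic foliation (Proposition~\ref{foliation!2}), the waist inequality (Theorem~\ref{waist}), and the Heintze--Karcher tube estimate (Theorem~\ref{hk}). Heuristically, waist forces some fiber of the foliation to have a large $t$-neighbourhood, while Heintze--Karcher caps the $t$-tube of the holomorphic portion of any such fiber; the remaining discrepancy must be absorbed by $N_t(E)$. Fix $t>0$ and an auxiliary scale $\varepsilon=\varepsilon(t)\in(0,R/2)$ to be chosen at the end. Set $U:=\overline{N_{\varepsilon/2}(\partial B(\pi R^2)\cup E)}$, a closed $(\varepsilon/2)$-neighbourhood in $\mathbb{R}^4$; by Proposition~\ref{foliation!2} there is a smooth critical-point-free map $f\colon B(\pi R^2)\setminus U\to\mathbb{R}^2$ whose non-empty fibers are complex (hence minimal) $2$-submanifolds of Euclidean area $<\pi r^2$. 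The closed set $K:=\overline{B(\pi(R-\varepsilon)^2)}\setminus N_\varepsilon(E)$ sits inside $B(\pi R^2)\setminus U$, so $f|_K$ is continuous and extends by Tietze to a continuous $\tilde f\colon\mathbb{R}^4\to\mathbb{R}^2$.

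Rescaling Theorem~\ref{waist} from the unit ball to $B(\pi(R-\varepsilon)^2)$ and using $h_{4,2}\in O(t^4)$ yields some $y_0\in\mathbb{R}^2$ with
\[\Vol_4\bigl(N_t(F)\cap B(\pi(R-\varepsilon)^2)\bigr)\ge \pi^2(R-\varepsilon)^2\,t^2 - C_R\,t^4,\]
for $F:=\tilde f^{-1}(y_0)\cap B(\pi(R-\varepsilon)^2)$, where $C_R$ depends only on $R$ (the uniformity being precisely the content of the uniform statement of Theorem~\ref{waist}). Since points of $B(\pi(R-\varepsilon)^2)$ are at Euclidean distance $>\varepsilon/2$ from $\partial B(\pi R^2)$, we have $B(\pi(R-\varepsilon)^2)\cap U\subset \overline{N_\varepsilon(E)}$; writing $\Sigma:=f^{-1}(y_0)$ this gives $F\subset \Sigma\cup\overline{N_\varepsilon(E)}$, and hence
\[N_t(F)\subset N_t(\Sigma)\cup N_{t+\varepsilon}(E).\]
The fiber $\Sigma$ is complex with area $\le\pi r^2$; exhausting it by compact minimal sub-domains with smooth boundary, applying Theorem~\ref{hk} to each, and passing to the limit in the increasing union (using $N_t(\Sigma)=V_t(\Sigma)$ for submanifolds in Euclidean space) gives $\Vol_4(N_t(\Sigma))\le\pi^2 r^2\,t^2$. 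Combining everything,
\[\Vol_4\bigl(N_{t+\varepsilon}(E)\bigr)\ge \pi^2\bigl((R-\varepsilon)^2-r^2\bigr)\,t^2 - C_R\,t^4.\]

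Finally, to produce the stated bound at a generic scale $\tau>0$, set $\tau:=t+\varepsilon$ and choose $\varepsilon=\varepsilon(\tau)\to0$ with $\varepsilon/\tau\to 0$, for instance $\varepsilon=\tau^{3/2}$. Then $t=\tau(1-o(1))$, $t^2=\tau^2-o(\tau^2)$, and $(R-\varepsilon)^2=R^2-o(1)$, so the previous inequality rearranges to $\Vol_4(N_\tau(E))\ge \pi^2(R^2-r^2)\tau^2 - k_R(\tau)$ with some $k_R\in o(\tau^2)$. The chief difficulty is this balancing: the foliation is only available off a neighbourhood of $\partial B(\pi R^2)\cup E$, producing an unavoidable annular defect, and the leading constant $R^2-r^2$ survives only under $\varepsilon=o(\tau)$; that this is possible rests crucially on the uniform $O(t^4)$ error in the rescaled waist inequality together with the uniformity of Heintze--Karcher in the minimal surface. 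A secondary, routine technicality is that $\Sigma$ may fail to be compact; this is handled by exhaustion as above before invoking Theorem~\ref{hk}.
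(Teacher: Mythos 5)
Your overall strategy coincides with the paper's: build the Gromov foliation, continuously extend, apply the rescaled waist inequality to force a big-neighbourhood fiber, cap the tube of its holomorphic part by Heintze--Karcher, and balance a secondary scale ($\varepsilon$ here, $\delta$ in the paper) against $t$ at the end. However, there is a genuine gap in the step where you pass from the $t$-tube to the $t$-neighbourhood: the assertion ``$N_t(\Sigma)=V_t(\Sigma)$ for submanifolds in Euclidean space'' is false for the fiber $\Sigma = f^{-1}(y_0)$, because $\Sigma$ is an \emph{open, non-complete} surface sitting inside the open set $B(\pi R^2)\setminus U$. If the nearest point of $\Sigma$ to $p$ ``escapes'' to the topological frontier $\overline{\Sigma}\setminus\Sigma$, the segment from $p$ to $\Sigma$ need not be normal, so $p$ can lie in $N_t(\Sigma)$ but outside $V_t(\Sigma)$. (A one-dimensional example: for the open segment $\Sigma=(0,1)\times\{0\}\subset\R^2$, the point $(-t/2,0)$ is in $N_t(\Sigma)$ but not in $V_t(\Sigma)$.) Exhausting $\Sigma$ by compact pieces correctly gives $\Vol_4(V_t(\Sigma))\leq\pi^2r^2t^2$, but $N_t(\Sigma)$ is genuinely larger, and the excess lives precisely where you cannot afford it.

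Concretely, $\overline{\Sigma}\setminus\Sigma\subset\partial U\subset \overline{N_{\varepsilon/2}(\partial B(\pi R^2))}\cup\overline{N_{\varepsilon/2}(E)}$, so the discrepancy $N_t(\Sigma)\setminus V_t(\Sigma)$ is contained in $N_{t+\varepsilon/2}(\partial B(\pi R^2))\cup N_{t+\varepsilon/2}(E)$. The $E$-part is harmless (it merges into $N_{t+\varepsilon}(E)$), but the $\partial B$-part is not excluded by your intersection with $B(\pi(R-\varepsilon)^2)$: a point of that ball is only guaranteed to be at distance $>\varepsilon$ from $\partial B(\pi R^2)$, while you need distance $>t+\varepsilon/2$, and you eventually take $\varepsilon=o(t)$. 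So the annular defect near $\partial B(\pi R^2)$ survives and can have volume on the order of $t$, which swamps the $\Theta(t^2)$ estimate. The paper avoids this exactly by (i) stating the correct containment $N_t(\Sigma)\subset V_t(\Sigma)\cup N_t(\partial\Sigma)$ in its covering claim (3.1), and (ii) applying the waist inequality on the substantially smaller ball $B(\pi(R-2t)^2)$, which is disjoint from $N_t(\partial B(\pi(R-\delta)^2))$ because $\delta<t$. The fix to your argument is straightforward: replace $B(\pi(R-\varepsilon)^2)$ by a ball of radius $R-\varepsilon/2-t$ (or simply $R-2t$ with $\varepsilon<t$) in both the waist step and the final intersection; the leading constant $\pi^2(R^2-r^2)$ survives because the extra shrinkage is still $O(t)=o(1)$.
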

\begin{proof}
	Let $\Phi :B(\pi R^2) \setminus {E} \to Z(\pi r^2)$ be the symplectic embedding of the statement.
	Consider $0<t<\frac{R-r}{2}$ and take $0<\delta < t$. Later, we will send $\delta \to 0$.
	
	Let $U_\delta$ be the closure of  $N_\delta(\partial B(\pi R^2)\cup  {E})$ in $\mathbb{R}^4$ and let $\tilde{f}_\delta: B(\pi R^2)\setminus U_\delta \to \mathbb{R}^2$ be the map given by Proposition \ref{foliation!2}. 
	Note $B(\pi R^2) \setminus U_\delta = B(\pi(R-\delta)^2) \setminus \overline{N_\delta(E)}$.
	We then take $f_{\delta} :B(\pi(R-\delta)^2)\to \mathbb{R}^2$ to be any continuous extension of $\tilde{f}_\delta$. Since $f_{\delta}$ agrees with $\tilde{f}_\delta$ on $B(\pi(R-\delta)^2)\setminus U_\delta$, by the conclusions of Proposition \ref{foliation!2} we have for any $y\in \mathbb{R}^2$ that
	$$\Sigma_\delta := f_{\delta}^{-1}(y) \cap \big(B(\pi(R- \delta)^2) \setminus U_{\delta}\big)$$ is a minimal submanifold with area less than $A:=\pi r^2$. 
	
	The main idea of our proof is that the waist inequality guarantees a fibre with large volume neighbourhoods, but by the area bound (and the structure of tubes) this can only happen if the fibre accumulates near the exceptional set. Accordingly, a key component is the following covering claim:
	
	\textbf{Claim}: Let $f_{\delta,t}$ be the restriction of $f_{\delta}$ to the ball $B(\pi(R-2t)^2)$. Then 
	\begin{equation}
		\label{eq:mink-cover}
		N_t\left(f_{\delta,t}^{-1}(y)\right) \cap B(\pi(R-2t)^2) \subset V_t(\Sigma_\delta) \cup N_{\delta+t}(E). 
	\end{equation}

	Indeed, by definition of $\Sigma_\delta$ and the supposition $\delta<t$ we have that \[f_{\delta,t}^{-1}(y) \setminus \overline{N_\delta(E)} = \Sigma_\delta \cap B(\pi(R-2t)^2) \subset \Sigma_\delta.\] Now given any submanifold $\Sigma$, its $t$-neighbourhood $N_t(\Sigma)$ is always {contained in the union of} the tube $V_t(\Sigma)$ and the $t$-neighbourhood $N_t(\partial \Sigma)$ of its boundary. So since $\partial \Sigma_\delta \subset \partial U_\delta$, we have that \[N_t\left(f_{\delta,t}^{-1}(y) \setminus \overline{N_{\delta}(E)}\right) \subset N_t(\Sigma_{\delta}) \subset V_t(\Sigma_\delta)\cup N_t(\partial U_\delta).\] By the triangle inequality it follows that 
	\begin{equation}
		\label{eq:mink-fur} N_t(f_{\delta,t}^{-1}(y)) \subset V_t(\Sigma_\delta) \cup N_t(\partial U_\delta) \cup N_{\delta+t}(E).
	\end{equation}
	But by definition of $U_\delta$, we have $N_t(\partial U_\delta) \subset N_t(\partial B(\pi(R-\delta)^2)) \cup N_{\delta+t}(E)$, and since $\delta<t$, we note that $N_t(\partial B(\pi(R-\delta)^2)) \cap B(\pi(R-2t)^2))=\emptyset$. Taking the intersection of (\ref{eq:mink-fur}) with $B(\pi(R-2t)^2)$ then yields the claim. 
	
	Having established the claim, we now estimate the volume of each set in (\ref{eq:mink-cover}). First, let $h_{4,2}\in o(t^2)$ be as in Theorem \ref{waist}. By rescaling to the ball $B(\pi(R-2t)^2)$, the waist inequality Theorem \ref{waist} applied to $f_{\delta,t} : B(\pi(R-2t)^2) \to \mathbb{R}^2$ gives that there is some $y\in \mathbb{R}^2$ for which
	\begin{equation}
		\label{eq:mink-1}
		\Vol_4\left(N_t(f_{\delta,t}^{-1}(y)) \cap B(\pi(R-2t)^2)\right) \geq \pi^2 t^2 (R-2t)^2 - (R-2t)^4 h_{4,2}\left(\frac{t}{R-2t}\right).
	\end{equation}
	
	On the other hand, since $\Sigma_\delta$ is minimal with area at most $A$, the Heintze-Karcher inequality Theorem \ref{hk} yields 
	\begin{equation}
		\label{eq:mink-2}\Vol_4(V_t(\Sigma_\delta)) \leq   \Vol_2(\Sigma_\delta) \pi t^2 \leq A\pi t^2.
	\end{equation}

	Combining the covering (\ref{eq:mink-cover}) with the estimates (\ref{eq:mink-1}) and (\ref{eq:mink-2}) yields
	$$\Vol_4(N_{\delta +t}({E})) \geq (\pi^2 (R-2t)^2 - A \pi )t^2 - (R-2t)^4 h_{4,2}\left(\frac{t}{R-2t}\right).$$
	The  volume of $N_t({E})$ is non-decreasing with respect to $t$, and is continuous almost everywhere. 
	Therefore, sending $\delta \to 0$ and recalling that $A=\pi r^2$ in the inequality above, we obtain for any $t>0$ that:
	$$\Vol_4(N_{t}({E})) \geq \pi^2 ((R-2t)^2 - r^2 )t^2 - (R-2t)^4 h_{4,2}\left(\frac{t}{R-2t}\right).$$
	Thus taking \[k_R(t) = 4R\pi^2t^3  + R^4 h_{4,2}\left(\frac{t}{R-2t}\right) ,\] for instance, concludes the proof. 
\end{proof}

An immediate corollary is a lower bound for the lower Minkowski dimension. We will see that this bound is sharp in the next section, at least for radii $R$ which are not too large.
\begin{corollary}[Minkowski dimension] \label{cor:dimension}
	Suppose that $B(\pi R^2) \setminus {E}$ symplectically embeds into the cylinder $Z(\pi r^2) \subset \mathbb{R}^4$. 
	Then the $2$-dimensional lower Minkowski content of ${E}$ satisfies 
	$$\underline{\mathcal{M}}_{2} ({E}) \geq \pi(R^2-r^2).$$ In particular, the lower Minkowski dimension $\underline{\dim}_\mathcal{M} ({E})$ is at least $2$.
\end{corollary}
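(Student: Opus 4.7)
My plan is to deduce both assertions as an immediate consequence of the quantitative estimate in Theorem \ref{thm:obstruct}. The key is simply to unpack the definition of the lower Minkowski content with the correct normalization and let $t \to 0^+$.

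Concretely, I would first note that since $\alpha_{4-2} = \alpha_2 = \pi$, the $2$-dimensional lower Minkowski content in $\mathbb{R}^4$ is
$$\underline{\mathcal{M}}_2(E) = \liminf_{t \to 0^+} \frac{\Vol_4(N_t(E))}{\pi\, t^2}.$$
Theorem \ref{thm:obstruct} supplies a function $k_R \in o(t^2)$ for which
$$\Vol_4(N_t(E)) \geq \pi^2 (R^2 - r^2)\, t^2 - k_R(t).$$
Dividing by $\pi t^2$ and taking the liminf as $t \to 0^+$, the term $k_R(t)/(\pi t^2)$ drops out by the $o(t^2)$ property, yielding
$$\underline{\mathcal{M}}_2(E) \geq \pi(R^2 - r^2).$$
Assuming $R > r$ (otherwise the bound is vacuous and the statement trivial), this is strictly positive.

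The bound on the lower Minkowski dimension then follows immediately from the characterization $\underline{\dim}_{\mathcal{M}}(E) = \sup\{s \geq 0 : \underline{\mathcal{M}}_s(E) > 0\}$ recalled in the preliminaries: positivity of $\underline{\mathcal{M}}_2(E)$ forces $\underline{\dim}_{\mathcal{M}}(E) \geq 2$. The only minor bookkeeping item is that Theorem \ref{thm:obstruct} is stated for compact $E$, whereas here $E$ is only assumed closed; however, since the small-$t$ behavior of $\Vol_4(N_t(E))$ is determined by $E \cap \overline{B(\pi R^2)}$, which is compact, the reduction is immediate and the result extends without change. I do not expect any real obstacle in this argument, as the substantive work has already been carried out in Theorem \ref{thm:obstruct}; the corollary is essentially a translation of that estimate into the language of Minkowski content and dimension.
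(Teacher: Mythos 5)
Your proof is correct and is exactly the short derivation the paper intends but leaves implicit: unpack $\underline{\mathcal{M}}_2(E)=\liminf_{t\to 0^+}\Vol_4(N_t(E))/(\pi t^2)$, plug in the bound from Theorem \ref{thm:obstruct}, and use that $k_R(t)/t^2\to 0$, then pass to the dimension bound via the $\sup$ characterization. Your side remark about closed versus compact is also the right thing to note (since $E$ is taken inside the bounded ball, its closure is compact and the Minkowski quantities are unchanged), so there is nothing missing.
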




\section{Squeezing the complement of a Lagrangian plane} \label{sec:construction}

In this section it will be more convenient to use complex coordinates for the standard symplectic $\mathbb{R}^4$. Therefore we consider $\mathbb{C}^2$ with its standard K\"{a}hler structure, i.e. if $x$ and $y$ are the complex coordinates, then the symplectic form is $$\frac{i}{2}(dx \wedge d\bar{x}+dy\wedge d\bar{y}).$$

Let us recall the main objects in the statement of Theorem \ref{thmsymplectomorphism} in complex notation for convenience. Let $B(2\pi)\subset \mathbb{C}^2$ be the open ball of radius $\sqrt{2}$ centered at the origin. Let $\mathbb{R}^2\subset \mathbb{C}^2$ be the real part and define $$L:=B(2\pi)\cap \mathbb{R}^2.$$

We also define $\mathcal{E}(\pi,4\pi)\subset \mathbb{C}^2$ to be the open ellipsoid $$\{(x,y)\mid \abs{x}^2+\frac{\abs{y}^2}{4}<1\},$$ and let $$\mathcal{C}:=\mathcal{E}(\pi,4\pi)\cap\{y=0\}.$$

Let us introduce the main actors in the proof. Let $\mathbb{C}{P}^2(2\pi)$ be the symplectic manifold obtained from coisotropic reduction of the sphere $S^5$ of radius $\sqrt{2}$ in $\mathbb{C}^3$. Denoting the complex coordinates on $\mathbb{C}^3$ by $z_1,z_2,z_3$, we are using the symplectic structure $\frac{i}{2}(dz_1\wedge d\bar{z}_1+dz_2\wedge d\bar{z}_2+dz_3\wedge d\bar{z}_3)$ on $\mathbb{C}^3$. There is a canonical identification of $\mathbb{C}{P}^2(2\pi)$ with the complex manifold $\mathbb{C}{P}^2:=\mathrm{Gr}_{\mathbb{C}}(1,3)$, whose homogeneous coordinates we will denote by $[z_1:z_2:z_3]$. Of course, $\mathbb{C}{P}^2(2\pi)$ is nothing but $\mathbb{C}{P}^2$ equipped with the Fubini-Study symplectic form scaled so that a complex line (e.g. $\{z_1=0\}\subset \mathbb{C}{P}^2$) has area $2\pi$. 

Let us also specify some submanifolds of $\mathbb{C}{P}^2(2\pi)$ using its canonical identification with $\mathbb{C}{P}^2$. \begin{itemize}\item $L_{\RP}$ is the real part $$\{[z_1:z_2:z_3]\mid \Im(z_1)=\Im(z_2)=\Im(z_3)=0\}.$$ 
\item For  $t=[t_1:t_2:t_3]\in \RP:=Gr_{\mathbb{R}}(1,3)$, we define the complex lines $$S_t:=\{[z_1:z_2:z_3]\mid t_1z_1+t_2z_2+t_3z_3=0\}.$$
\item $FQ$ is the Fermat quadric $$\{[z_1:z_2:z_3]\mid z_1^2+z_2^2+z_3^2=0\}.$$
\end{itemize}
It is well-known that $\mathbb{C}{P}^2(2\pi)\setminus(S_{[0:0:1]}\cup L_{\RP})$ is symplectomorphic to $B(2\pi)\setminus L$ (see Exercise 9.4.11 in \cite{MS12}).

Consider $\RP$ as a smooth manifold in the standard way. Let $\lambda$ be the tautological one-form  on $T^*\RP$ and $V$ be the Liouville vector field, which is a vertical vector field equal to the Euler vector field in each fiber (which is defined on any vector space independently of a basis). We have $$\omega(V,\cdot)=\lambda,$$ where $\omega=d\lambda.$ We denote the zero section submanifold on $T^*\RP$ by $Z_{\RP}.$

The Riemannian metric on $S^2$ obtained from its embedding as the round sphere of radius $1$ in $\mathbb{R}^3$ induces a metric on its quotient by the antipodal map, which is canonically diffeomorphic to $\RP$. We call this the round metric on $\RP$ and denote it by $g_{\RP}$. We have the diffeomorphism $g_{\RP}^\sharp :T\RP\to T^*\RP,$ which is in particular linear on the fibers. We can transport the function $K:T\RP\to \mathbb{R},$ given by lengths of tangent vectors to 
\begin{equation}\label{ksharp}
K^\sharp: T^*\RP\to \mathbb{R}.
\end{equation}
On $T\RP$ we have the geodesic flow; under the identification by $g_{\RP}^\sharp$ this becomes the Hamiltonian flow of the function $(K^\sharp)^2/2$. The normalized geodesic flow on $T\RP \setminus \RP$ becomes the Hamiltonian flow of $K^\sharp$. Let us call these the geodesic flow and normalized geodesic flow on $T^*\RP$. Note that the normalized geodesic flow on $T^*\RP\setminus Z_{\RP}$ is a $\pi$-periodic action of $\mathbb{R}$.

Any unparametrized oriented geodesic circle $\gamma$ in $\RP$ with its round metric defines a symplectic submanifold $C_{\gamma}$ in $T^*\RP$ with boundary on $Z_{\RP}$ by taking points $(q,p)$ such that $q\in\gamma$ and $p=g_q(v,\cdot)$ where $v$ is non-negatively tangent to $\gamma$. Let us denote by $-\gamma$ the geodesic circle with opposite orientation. Clearly, $C_{\gamma}$ and $C_{-\gamma}$ intersect along $\gamma\subset Z_{\RP}$ and form a symplectic submanifold of $T^*\RP$, which is diffeomorphic to a cylinder. 

Let $D^*\RP\subset T^*\RP$ be the closed unit disk bundle, which is given by the subset $K^\sharp\leq 1$. Also let $U^*\RP = (K^\sharp)^{-1}(1)$ be the unit sphere bundle, i.e. the boundary of $D^*\RP,$ with its induced contact structure $\theta:=\iota^*\lambda$. 

The symplectic reduction $U^*\RP/S^1$ is a two-sphere equipped with a canonical symplectic form. Let us call this symplectic manifold $(Q,\omega_Q).$ The points of $Q$ are canonically identified with unparametrized oriented geodesic circles in round $\RP.$

Let us denote the boundary reduction symplectic manifold of $D^*\RP$ by $\overline{D^*\mathbb{RP}^2}$ (see Definition 3.9 of \cite{Sym03}; also note the interpretation as one half of a symplectic cut \cite{lerman}).

Note that $Q$ and $Z_{\RP}$ sit naturally inside $\overline{D^*\mathbb{RP}^2}$.  The Poincar\'{e} dual of the homology class of $Q$ is $\frac{1}{\pi}$ times the symplectic class. The cylinders $(C_{\gamma}\cup C_{-\gamma})\cap D^*\RP$ become symplectic 2-spheres in $\overline{D^*\mathbb{RP}^2}$. They intersect $Q$ positively in two points and $Z_{\RP}$ along the circle $\gamma$. Let us call these spheres $S_{\gamma}$, now indexed by unoriented unparametrized geodesic circles on $\RP$. Each $S_{\gamma}$ has self-intersection number $1$.

Let $\gamma_0$ be the oriented unparametrized geodesic on $\RP$ which corresponds to the quotient of the horizontal great circle in $S^2\subset \mathbb{R}^3$ oriented  as the boundary of the lower hemi-sphere. We define $S:=S_{\gamma_0}$. 

\begin{proposition} \label{extension}
There is a symplectomorphism $\overline{D^*\mathbb{RP}^2}\to \mathbb{C}\mathbb{P}^2(2\pi)$ with the following properties: \begin{itemize}
\item $Z_{\RP}$ is sent to $L_{\RP}.$
\item $S$ is sent to $S_{[0:0:1]}$.
\item $Q$ is sent to $FQ$.
\end{itemize}
\end{proposition}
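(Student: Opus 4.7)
The plan is to build the desired symplectomorphism in three stages: invoke the explicit Oakley–Usher map on the open unit disc bundle, observe that it descends through the symplectic cut, and then extend across the divisor $Q$ by a symplectic neighborhood argument.

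Concretely, I would invoke the map $\Psi: D^*\RP^2 \to \CP^2(2\pi)$ from \cite[Lemma 3.1]{OU16}: after identifying $T^*\RP^2 \cong T\RP^2$ via the round metric and lifting to $S^2 \subset \R^3$, it takes (up to normalization) the form $\Psi(q, v) = [q + iv] \in \CP^2$, well-defined modulo $(q, v) \sim (-q, -v)$. Immediately $\Psi(Z_\RP) = L_\RP$. For $(q, v) \in U^*\RP^2$ one has $(q+iv)^2 = |q|^2 - |v|^2 + 2i\,q\cdot v = 0$, so $\Psi(U^*\RP^2) \subset FQ$. Along a unit-speed geodesic $\gamma(t) = \cos t\, q_0 + \sin t\, v_0$ with velocity $\dot\gamma(t) = -\sin t\, q_0 + \cos t\, v_0$, the calculation
\[
\Psi(\gamma(t), \dot\gamma(t)) = [(\cos t - i\sin t)q_0 + (\sin t + i\cos t)v_0] = [e^{-it}(q_0+iv_0)] = [q_0+iv_0]
\]
shows that $\Psi|_{U^*\RP^2}$ is constant along Reeb orbits. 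Hence $\Psi$ descends to a continuous bijection $\bar\Psi: \overline{D^*\mathbb{RP}^2} \to \CP^2(2\pi)$ which, by the Oakley--Usher computation, is a symplectomorphism on the complement of $Q$ and a diffeomorphism $Q \to FQ$.

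To promote $\bar\Psi$ to a global symplectomorphism across $Q$, I would invoke the symplectic neighborhood theorem for divisors in the form of \cite[Lemma 3.1]{opshtein}. This requires the symplectic areas and normal bundles of $Q$ and $FQ$ to match. In $\CP^2(2\pi)$, $FQ$ is a degree-$2$ curve, so has area $4\pi$ and self-intersection $4$. On the other side, the self-intersection of $Q$ equals the Euler number of the $S^1$-bundle $U^*\RP^2 \to Q$: one knows $U^*S^2 \cong SO(3) \to Q$ has Euler number $2$ (the standard fibration by the geodesic flow $S^1$-subgroup), and the $\Z/2$-quotient $U^*S^2 \to U^*\RP^2$ halves the fiber length, doubling the Euler number of the descended bundle to $4$. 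The area of $Q$ is then seen to be $4\pi$, consistent with $[S]\cdot[Q]=2$ and $[S]\cdot[\omega]=2\pi$ in $\overline{D^*\mathbb{RP}^2}$, so that $[Q]$ is a degree-$2$ class.

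For the three stated correspondences, $Z_\RP \mapsto L_\RP$ and $Q \mapsto FQ$ are built into the construction. For $S \mapsto S_{[0:0:1]}$, observe that the horizontal great circle $\gamma_0$ lifts to $\{q_3 = 0\} \cap S^2$ and its tangent vectors also lie in $\{q_3 = 0\}$; hence for all $(q, v) \in C_{\gamma_0} \cup C_{-\gamma_0}$ one has $[q+iv] \in \{z_3 = 0\} = S_{[0:0:1]}$, with surjectivity following from a dimension count and the fact that the two halves corresponding to $\gamma_0$ and $-\gamma_0$ glue to all of $S_{[0:0:1]}$ across their common boundary in $Z_\RP$ and $Q$. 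The main obstacle will be verifying the hypotheses of Opshtein's lemma, particularly the Euler number and area computations for $Q$, which require careful bookkeeping of the interplay between the Reeb $S^1$-action and the $\Z/2$-quotient.
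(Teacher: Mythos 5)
Your proposal takes a genuinely different route from the paper, and it has a real gap at the crucial step.

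The paper's proof shows directly that the Oakley--Usher map $\Phi$ descends to a \emph{smooth} map $\overline{\Phi}$ on the boundary reduction, then observes that smoothness together with continuity of the pullback form forces $\overline{\Phi}$ to be a symplectomorphism everywhere. The hard part is establishing smoothness across $Q$, which the paper handles via an $\mathrm{SO}(n+1)$-equivariance reduction to an explicit $n=1$ computation in a collar model of the boundary reduction. Your proposal tries to sidestep the smoothness question entirely by appealing to a symplectic neighborhood theorem near the divisor.

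Here is the gap. The symplectic neighborhood theorem (or Opshtein's lemma) would give you \emph{some} symplectomorphism between a neighborhood of $Q$ and a neighborhood of $FQ$, given that the areas ($4\pi$ each) and normal Euler numbers ($4$ each) match -- computations you carry out correctly. But that is a fresh symplectomorphism; it is not an extension of $\bar\Psi$. To ``promote'' $\bar\Psi$ to a global symplectomorphism you would need a gluing argument: show that $\bar\Psi$ and the neighborhood symplectomorphism agree up to a compactly supported symplectic isotopy on the overlap annulus, then interpolate. That Moser-type step is nontrivial and is not addressed. Conversely, if $\bar\Psi$ were already known to be smooth across $Q$, the neighborhood theorem would be unnecessary -- but then you are back to proving exactly what the paper proves. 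Additionally, the correspondence $S \mapsto S_{[0:0:1]}$ is endangered by any modification near $Q$: the sphere $S$ meets $Q$ in two points, so a neighborhood-theorem replacement will not automatically preserve $S$; you would have to arrange this explicitly, which is a further unaddressed subtlety. (One smaller quibble: your formula $\Psi(q,v)=[q+iv]$ is not area-preserving; you hedge with ``up to normalization'' and then invoke the actual Oakley--Usher formula, which is fine, but worth being precise about since the $f(|p|)$-scaling is exactly what makes the interior map a symplectomorphism.)
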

The proof of this proposition is postponed to Subsection \ref{subsection: extension}.
Let us continue with an immediate corollary.

\begin{corollary} \label{utile}
$\overline{D^*\mathbb{RP}^2}\setminus (S\cup Z_{\RP})$ is symplectomorphic to $\mathbb{C}{P}^2(2\pi)\setminus (S_{[0:0:1]}\cup L_{\RP})$, and in turn to $B(2\pi)\setminus L$.
\end{corollary}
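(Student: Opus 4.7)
The plan is to deduce the corollary in two short steps, since essentially all the work has been packaged into Proposition \ref{extension} and the standard identification in \cite[Exercise 9.4.11]{MS12}.

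First, I would apply Proposition \ref{extension} directly. Let $\Psi\colon \overline{D^*\mathbb{RP}^2} \to \mathbb{C}{P}^2(2\pi)$ be the symplectomorphism it supplies, which sends $Z_{\RP}$ onto $L_{\RP}$ and $S$ onto $S_{[0:0:1]}$. Since $\Psi$ is a global diffeomorphism sending the pair of submanifolds $S\cup Z_{\RP}$ bijectively onto $S_{[0:0:1]}\cup L_{\RP}$, its restriction
\[
\Psi\colon \overline{D^*\mathbb{RP}^2}\setminus (S\cup Z_{\RP}) \;\longrightarrow\; \mathbb{C}{P}^2(2\pi)\setminus (S_{[0:0:1]}\cup L_{\RP})
\]
is a symplectomorphism onto its image. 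This gives the first claimed identification for free.

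Second, I would invoke the standard fact that $\mathbb{C}{P}^2(2\pi)\setminus S_{[0:0:1]}$ is symplectomorphic to the open ball $B(2\pi)\subset \mathbb{C}^2$ via the affine chart $[z_1:z_2:z_3]\mapsto (z_1/z_3, z_2/z_3)$ suitably rescaled; this is precisely \cite[Exercise 9.4.11]{MS12}. Under this identification the real locus $L_{\RP}$ corresponds exactly to the real part $L = B(2\pi)\cap \mathbb{R}^2$, because the chart is defined by quotienting by a real coordinate and therefore maps points with purely real homogeneous coordinates to points with real affine coordinates. Restricting this symplectomorphism to the complement of $L_{\RP}$ on one side and $L$ on the other yields a symplectomorphism $\mathbb{C}{P}^2(2\pi)\setminus (S_{[0:0:1]}\cup L_{\RP}) \cong B(2\pi)\setminus L$.

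Composing the two symplectomorphisms above produces the desired identification $\overline{D^*\mathbb{RP}^2}\setminus (S\cup Z_{\RP}) \cong B(2\pi)\setminus L$. There is essentially no obstacle at this stage: all the substantive content -- in particular, arranging that the boundary-reduced cotangent disk bundle matches $\mathbb{C}{P}^2(2\pi)$ with the correct correspondence of distinguished submanifolds -- is deferred to the proof of Proposition \ref{extension}, and the passage to the ball is a textbook chart computation. The only care required is to check that each of the two symplectomorphisms in the chain matches the distinguished submanifolds on the nose, which is guaranteed by the explicit list of properties in Proposition \ref{extension} together with the reality-preserving nature of the affine chart.
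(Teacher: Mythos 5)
Your proposal matches the paper's own route: the corollary really is an immediate restriction of the symplectomorphism from Proposition~\ref{extension} to complements, composed with the well-known identification of $\mathbb{C}{P}^2(2\pi)\setminus S_{[0:0:1]}$ with $B(2\pi)$ taking $L_{\RP}$ to $L$ (cited in the paper as \cite[Exercise 9.4.11]{MS12}). Your added remark about the chart respecting the real loci is correct and simply spells out why that identification matches the distinguished submanifolds, which the paper leaves implicit.
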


Note that $V\cdot K^\sharp=K^\sharp$ on $T^*\RP\setminus Z_{\RP}$, which means that $K^\sharp$ is an exponentiated Liouville coordinate for $V$ on $T^*\RP\setminus Z_{\RP}$. Hence, we obtain a Liouville isomorphism $$T^*\RP\setminus Z_{\RP}\simeq (U^*\RP\times (0,\infty)_r, d(r\theta)),$$ where $ K^\sharp$ is matched with the function $r.$ The Hamiltonian vector field $X_r$ gives the $r$-translation invariant Reeb vector field (using $r=1$) on the contact levels. Finally, observe that $C_{\gamma}\cap(T^*\RP\setminus Z_{\RP})$'s are obtained as the traces of the Reeb orbits on $U^*\RP$ under the Liouville flow.

In particular, we have a foliation of $\overline{D^*\mathbb{RP}^2}\setminus Z_{\RP}$ by open disks which are the reductions of $C_{\gamma}\cap(D^*\RP\setminus Z_{\RP})$'s. Let us denote this $Q$-family of submanifolds by $\mathbb{D}_\gamma$, $\gamma\in Q$.

\begin{proposition}\label{propbrsd}
$\overline{D^*\mathbb{RP}^2}\setminus Z_{\RP}$ is symplectomorphic to an area $\pi$ standard symplectic disk bundle of $(Q,\omega_Q)$ in the sense of Biran (Section 2.1 \cite{biran}) in such a way that $\mathbb{D}_\gamma$ is sent to the fiber over $\gamma$ for every $\gamma\in Q$.
\end{proposition}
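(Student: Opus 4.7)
The plan is to read off the disk bundle structure directly from the Liouville coordinate description of $T^*\RP\setminus Z_\RP$ established in the paragraphs preceding the statement, and then verify that this structure matches Biran's standard symplectic disk bundle of fiber area $\pi$.

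First, observe that $(U^*\RP,\theta)$ is a principal $S^1$-bundle over $Q$ with connection 1-form $\theta$, the $S^1$-action being the $\pi$-periodic normalized geodesic flow; by the construction of $\omega_Q$ via symplectic reduction, the curvature $d\theta$ descends to $\pi^*\omega_Q$, so $U^*\RP\to Q$ is the prequantization bundle of $(Q,\omega_Q)$ that underlies Biran's construction. Under the Liouville isomorphism $T^*\RP\setminus Z_\RP\cong(U^*\RP\times(0,\infty)_r, d(r\theta))$, the region $D^*\RP\setminus Z_\RP$ corresponds to $r\in(0,1]$, and the boundary reduction at $r=1$ collapses each $S^1$-orbit to a point, producing the embedded copy of $Q$ inside $\overline{D^*\RP^2}$.

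Next I would construct an explicit diffeomorphism $\Psi:\overline{D^*\RP^2}\setminus Z_\RP\to E$ onto the total space of the associated open disk bundle $E:=U^*\RP\times_{S^1}\mathbb{D}$, where $S^1$ acts on a model disk $\mathbb{D}$ by rotation whose period is matched with the Reeb period $\pi$. Concretely, one sends $(p,r)\in U^*\RP\times(0,1]$ to the class $[p,\rho(r)]$ for a radial reparametrization such as $\rho(r)=\sqrt{1-r^2}$, so that the collapsed locus $r=1$ corresponds to the zero section of $E$. A direct computation using the decomposition $d(r\theta)=dr\wedge\theta+r\pi^*\omega_Q$ then shows that $\Psi$ pulls back Biran's standard symplectic form on $E$ to the form on $\overline{D^*\RP^2}\setminus Z_\RP$: the horizontal part matches $\pi^*\omega_Q$, the vertical part matches the fiberwise disk form, and at $r=1$ the form descends to $\omega_Q$ on $Q$. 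The fiber area computes to $\int_0^1 dr\int_{\mathrm{Reeb\,orbit}}\theta = 1\cdot\pi=\pi$, since $\theta$ integrates to the Reeb period $\pi$ over each orbit; this matches the claimed area normalization.

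The identification of $\mathbb{D}_\gamma$ with the fiber of $E\to Q$ over $\gamma$ is then essentially built in: by definition $\mathbb{D}_\gamma$ is the reduction of $C_\gamma\cap(D^*\RP\setminus Z_\RP)$, which in the Liouville coordinates is the trace of the Reeb orbit over $\gamma$ across $r\in(0,1]$ with the $r=1$ circle collapsed; under $\Psi$ this is precisely an associated-bundle fiber over $\gamma$. The main obstacle I foresee is bookkeeping: matching Biran's normalization conventions for the standard disk bundle (the period of the model $S^1$-action, the sign and scaling of the connection, and the precise fiber-area convention) so that the two symplectic forms agree on the nose rather than merely up to a fiberwise symplectomorphism. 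This is technical rather than conceptually hard, given that all ingredients are already encoded in the explicit Liouville coordinates at hand.
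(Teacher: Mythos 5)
Your approach is essentially the same as the paper's: identify $D^*\RP\setminus Z_\RP$ with $U^*\RP\times(0,1]_r$ via the exponentiated Liouville coordinate, regard $U^*\RP\to Q$ as the prequantization $S^1$-bundle with connection $\theta/\pi$ and curvature $\omega_Q/\pi$, and then produce an explicit fiberwise radial reparametrization matching the boundary-reduced form to Biran's disk-bundle form. The paper does exactly this, writing $d(r\tilde\theta)=\underline{\mathrm{pr}}^*\omega_Q - d\bigl((1-r)\tilde\theta\bigr)$ and comparing with Biran's form $\underline{\mathrm{Pr}}^*\omega_Q - d(\rho^2\,\mathrm{Pr}^*\theta)$.

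However, the specific reparametrization you propose, $\rho(r)=\sqrt{1-r^2}$, is wrong: matching the two forms forces $\rho^2 = 1-r$ \emph{and} $-2\rho\,d\rho = dr$, which is consistent only for $\rho(r)=\sqrt{1-r}$, i.e.\ $r = 1-\rho^2$. With $\rho=\sqrt{1-r^2}$ the horizontal component gives $(1-\rho^2)\underline{\mathrm{Pr}}^*\omega_Q = r^2\,\underline{\mathrm{Pr}}^*\omega_Q$, not $r\,\underline{\mathrm{Pr}}^*\omega_Q$, so the forms do \emph{not} agree. You anticipated in your last paragraph that the normalization bookkeeping might bite; this is exactly where it does, and your earlier assertion that ``a direct computation ... then shows that $\Psi$ pulls back Biran's standard symplectic form'' fails for your chosen $\rho$. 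Replacing $\sqrt{1-r^2}$ by $\sqrt{1-r}$ repairs the argument, after which it coincides with the paper's.
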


\begin{remark}
Note that the cohomology class of $\frac{\omega_Q}{\pi}$ is integral and it admits a unique lift to $H_2(Q,\mathbb{Z})$. What we mean by an area  $\pi$ standard symplectic disk bundle of $(Q,\omega_Q)$ is an area  $1$ standard symplectic disk bundle of $(Q,\frac{\omega_Q}{\pi})$ with its symplectic form multiplied by $\pi$.
\end{remark}

\begin{proof}
The symplectomorphism $D^*\RP\setminus Z_{\RP}\simeq U^*\RP\times (0,1]_r$ induces a symplectomorphism of the boundary reductions of both sides.  We note that
\begin{align}\label{eqsympbr}d(r\tilde{\theta})=\underline{\mathrm{pr}}^*\omega_Q-d((1-r)\tilde{\theta})\end{align} on $U^*\RP\times (0,1)$, where we define $\tilde{\theta}=\mathrm{pr}^*\theta$ for clarity.

Here we use the maps in the following commutative diagram, where $\mathrm{Pr},\mathrm{pr}$ are the obvious projections, and $U^*\mathbb{RP}^2\to Q$ is the symplectic reduction map.
\begin{center}
\begin{tikzcd}
U^*\RP\times [0,\infty)_{\rho} \arrow{dr}{\mathrm{Pr}}  \arrow{ddr}[swap]{\underline{\mathrm{Pr}}} &&U^*\RP\times (0,1)_r \arrow{dl}[swap]{\mathrm{pr}} \arrow{ddl}{\underline{\mathrm{pr}}}\\
& U^*\mathbb{RP}^2 \arrow{d}{} &\\
&Q&
\end{tikzcd}
\end{center}

Notice that the map  $U^*\RP\to Q$ has the structure of a principal $U(1)=\mathbb{R}/\mathbb{Z}$ bundle structure using the Reeb flow of the contact form $\frac{\theta}{\pi}$, and the associated complex line bundle $\mathcal{L}$ is precisely the fiberwise blow-down of $U^*\RP\times [0,\infty)_{\rho}$ with respect to its canonical projection $\underline{\mathrm{Pr}}$ to $Q$. The integral Chern class of this complex line bundle is $\frac{\omega_Q}{\pi}$ (using that $H_*(Q,\mathbb{Z})$ has no torsion) and a transgression 1-form is given by the pull-back of $-\frac{\theta}{\pi}$ by $\mathrm{Pr}$. 
By definition, the open unit disk bundle $\rho<1$ inside $\mathcal{L}$, endowed with the symplectic form \begin{align}\label{eqsympsd}\underline{\mathrm{Pr}}^*\left(\frac{\omega_Q}{\pi}\right)+d\left(\rho^2\mathrm{Pr}^*\left(-\frac{\theta}{\pi}\right)\right),\end{align} is an area  $1$ standard symplectic disk bundle of $(Q,\frac{\omega_Q}{\pi})$. Therefore, if we multiply this form by $\pi$, we obtain an area  $\pi$ standard symplectic disk bundle of $(Q,\omega_Q)$.

Now consider the following commutative diagram:
\begin{center}
\begin{tikzcd}
U^*\RP\times [0,1)_{\rho}\arrow{rr}{r=1-\rho^2}\arrow{d}{}&&U^*\RP\times (0,1]_r \arrow{d}{}
\\ \{\rho<1\}\subset \mathcal{L}\arrow{rr}{F}&&\overline{U^*\RP\times (0,1]}
\end{tikzcd}
\end{center}

Here the left map is the restriction of the fiberwise blowdown map and the right map is the boundary reduction map. The upper map sends $(x,\rho) \in U^*\RP\times [0,1)_{\rho}$ to $(x,1-\rho^2)$, and it is a homeomorphism overall as well as a diffeomorphism of the interiors. By construction of symplectic boundary reduction we deduce that there is a canonical diffeomorphism $F$ making this diagram commutative. 

It automatically follows from comparing Equations \eqref{eqsympbr} and \eqref{eqsympsd} that $F$ is a symplectomorphism. Composing $F$ with the symplectomorphism from the very beginning of this proof yields the desired symplectomorphism.

\end{proof}


Combined with Corollary \ref{utile}, the following finishes the proof of Theorem \ref{thmsymplectomorphism}.

\begin{proposition}
$\overline{D^*\mathbb{RP}^2}\setminus (S\cup Z_{\RP})$ is symplectomorphic to $\mathcal{E}(\pi,4\pi)\setminus \mathcal{C}$.
\end{proposition}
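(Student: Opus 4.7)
The plan is to combine the disk-bundle identification of Proposition \ref{propbrsd} with Opshtein's Lemma 3.1 \cite{opshtein}, and then match the leftover fiber with $\mathcal{C}$.

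First, apply Proposition \ref{propbrsd} to obtain a symplectomorphism from $\overline{D^*\RP^2}\setminus Z_{\RP}$ onto $E$, the area-$\pi$ standard symplectic disk bundle over $(Q,\omega_Q)$, sending each $\mathbb{D}_\gamma$ to the fiber $E_\gamma$. By Proposition \ref{extension}, $Q$ is symplectomorphic to the Fermat quadric $FQ\subset \CP^2(2\pi)$; since $FQ$ is a degree-$2$ curve and a line in $\CP^2(2\pi)$ has area $2\pi$, the sphere $(Q,\omega_Q)$ has total area $4\pi$. So $E$ is the standard symplectic disk bundle of fiber area $\pi$ over a symplectic sphere of area $4\pi$.

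Next, identify $S\cap(\overline{D^*\RP^2}\setminus Z_{\RP})$ with the union of two fibers of $E$. By construction, $S=S_{\gamma_0}$ is the sphere in $\overline{D^*\RP^2}$ obtained by boundary reduction from $(C_{\gamma_0}\cup C_{-\gamma_0})\cap D^*\RP$, and the Liouville/Reeb description underlying Proposition \ref{propbrsd} sends $C_{\pm\gamma_0}\cap(D^*\RP\setminus Z_{\RP})$ onto $\mathbb{D}_{\pm\gamma_0}=E_{\pm\gamma_0}$. Hence under the identification with $E$ we have $S\cap(\overline{D^*\RP^2}\setminus Z_{\RP})=E_{\gamma_0}\sqcup E_{-\gamma_0}$, where $\gamma_0$ and $-\gamma_0$ are two distinct points of $Q$ (the two orientations of the horizontal geodesic circle on $\RP$).

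The main step is to invoke Opshtein's Lemma 3.1 \cite{opshtein}, which identifies the standard symplectic disk bundle of fiber area $a$ over a symplectic sphere of area $V$, with a single fiber removed, symplectomorphically with $\mathcal{E}(a,V)$. Applied with $(a,V)=(\pi,4\pi)$ and with the fiber $E_{-\gamma_0}$ removed, this gives a symplectomorphism $\Phi \colon E\setminus E_{-\gamma_0}\to\mathcal{E}(\pi,4\pi)$, and one is left to check that $\Phi(E_{\gamma_0})=\mathcal{C}$. Both $E_{\gamma_0}$ and $\mathcal{C}$ are disks of area $\pi$ that are fixed set-wise by a natural Hamiltonian $S^1$-action: respectively, rotation of the base sphere around the axis through $\{\gamma_0,-\gamma_0\}$ on the one side, and rotation of the second coordinate of $\C^2$ fixing $\{y=0\}$ on the other. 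Matching these $S^1$-actions under $\Phi$ forces $\Phi(E_{\gamma_0})=\mathcal{C}$, and composing with the earlier symplectomorphisms yields $\overline{D^*\RP^2}\setminus(S\cup Z_{\RP})\simeq \mathcal{E}(\pi,4\pi)\setminus\mathcal{C}$.

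The main obstacle in carrying out this plan is the final compatibility check: ensuring that under Opshtein's identification the second fiber lands on $\mathcal{C}$, rather than on some other disk of area $\pi$. This equivariance can be verified either directly from the $S^1$-equivariance built into Opshtein's construction, or by comparing the toric moment polygons of the two sides — each is a right triangle with legs of lengths $\pi$ and $4\pi$, and in both descriptions the relevant invariant disk corresponds to the short leg of this triangle.
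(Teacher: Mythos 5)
Your outline follows the paper's proof almost exactly: both use Proposition \ref{propbrsd} to pass to a fiber-area-$\pi$ standard symplectic disk bundle over $(Q,\omega_Q)$, both identify $S\cap(\overline{D^*\RP^2}\setminus Z_{\RP})$ with $\mathbb{D}_{\gamma_0}\sqcup\mathbb{D}_{-\gamma_0}$, and both appeal to Opshtein's lemma to turn the disk bundle minus one fiber into $\mathcal{E}(\pi,4\pi)$. The one real point of divergence is in the very last step, and here your version is weaker. The paper does not invoke Opshtein's lemma ``blindly'' and then try to identify the image of the leftover fiber after the fact; instead, it makes a deliberate choice of the symplectomorphism between the base $Q\setminus\{\gamma_0\}$ and the open disk of area $4\pi$, namely one that sends $-\gamma_0$ to the origin. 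Because Opshtein's construction sends fibers to fibers, this choice forces $\mathbb{D}_{-\gamma_0}\mapsto\mathcal{C}$ by construction, with nothing left to check. Your $S^1$-equivariance argument is plausible, but as you yourself flag, it is not fleshed out: you would need to (i) verify that Opshtein's identification is equivariant for the specific lifts of the circle actions you chose, and (ii) argue that an invariant area-$\pi$ disk is unique, i.e.\ rule out other invariant disks. Both points are doable but are exactly the work the paper's choice-of-base-map avoids. In short: same ingredients, but the paper builds the identification of $\mathbb{D}_{-\gamma_0}$ with $\mathcal{C}$ into the choice of normalization, whereas you try to recover it \emph{a posteriori} and leave a genuine gap that would need to be filled.
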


\begin{proof}We use our Proposition \ref{propbrsd} and Lemma 2.1 of \cite{opshtein} to find an explicit symplectomorphism from the complement of $\mathbb{D}_{\gamma_0}$ in $\overline{D^*\mathbb{RP}^2}\setminus Z_{\RP}$ to $\mathcal{E}(\pi,4\pi)$. Here we use a symplectomorphism between $Q\setminus \{\gamma_0\}$ and the two-dimensional open ellipsoid of area $4\pi$ which sends $-\gamma_0$ to the origin, so that $\mathbb{D}_{-\gamma_0}$ is sent to $\mathcal{C}$ by this symplectomorphism.
\end{proof}

\subsection{Proof of Proposition \ref{extension}} \label{subsection: extension}

We will freely use the canonical identification of $\mathbb{CP}^n(2\pi)$ with $(\mathbb{C}^{n+1}\setminus \{0\})/\mathbb{C}^*$. Note that the homogenous coordinates $[z_1:\cdots: z_{n+1}]$ denote the class $[z_1e_1+\cdots + z_{n+1} e_{n+1}]$, where $e_i$ is the standard basis of $\mathbb{R}^{n+1}$ and its complexification $\mathbb{C}^{n+1}$. We also realize $T^*\mathbb{RP}^n$ as $\{(q,p)\in S^n\times  \mathbb{R}^{n+1}, \langle q,p\rangle =0\} / \{\pm 1\}$, where $S^n\subset  \mathbb{R}^{n+1}$ is the unit sphere. It is a straightforward computation that the standard symplectic form on $T^*\mathbb{RP}^n$ descends from the restriction of $\sum_{i=1}^{n+1}dp_i \wedge dq_i$ on $\mathbb{R}^{2n+2}$ under this identification. Note also that $K^\sharp([q,p]) = |p|$ away from the zero section.

In \cite[Lemma 3.1]{OU16}, Oakley and Usher considered the map $$\Phi: D^*\mathbb{RP}^n \to \mathbb{CP}^n(2\pi)$$ defined by 
\begin{equation}
\label{eq:oakleyusher}
\Phi([q,p]) := [ \sqrt{f(|p|)} p + \frac{i}{\sqrt{f(|p|)}} q],
\end{equation}
where $f(x) = \frac{1-\sqrt{1-x^2}}{x^2}$ on $(0,1]$ and $f(0)=\frac{1}{2}$. They proved that $\Phi|_{\inte(D^*\mathbb{RP}^n)}$ is a symplectomorphism onto its image $\mathbb{CP}^n(2\pi)\setminus FQ_n$, where $$FQ_n= \{ [z_0:\cdots :z_n] \in \mathbb{CP}^n(2\pi) | \sum z_k^2=0\}$$ is the Fermat quadric. As before, we will denote by $\overline{D^*\mathbb{RP}^n}$ the boundary reduction of ${D^*\mathbb{RP}^n}$, and by $Z_{\mathbb{RP}^n}$ the zero section.
We have the following which implies Proposition \ref{extension}:

\begin{proposition}
The Oakley-Usher map $\Phi: D^*\mathbb{RP}^n \to \mathbb{CP}^n(2\pi)$ descends to a symplectomorphism
$$\overline{\Phi} : \overline{D^*\mathbb{RP}^n} \to \mathbb{CP}^n(2\pi).$$
\end{proposition}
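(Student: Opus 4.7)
The plan is to show that the Oakley-Usher map $\Phi$, already known to be a symplectomorphism from the interior $\inte(D^*\mathbb{RP}^n)$ onto $\mathbb{CP}^n(2\pi)\setminus FQ_n$, collapses the boundary sphere bundle $U^*\mathbb{RP}^n$ along the Reeb $S^1$-orbits and sends it bijectively onto the Fermat quadric $FQ_n$. This produces the continuous extension $\overline{\Phi}$, and once smoothness at the collapsed boundary is checked, the symplectic identity follows by continuity from Oakley-Usher's result on the dense open complement.

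For the boundary computation, I observe that $f(1)=1$, so $\Phi([q,p])=[p+iq]$ on $U^*\mathbb{RP}^n$. Parametrizing the Reeb (normalized geodesic) flow on $U^*\mathbb{RP}^n$ as $(q_t,p_t)=(\cos t\cdot q+\sin t\cdot p,\,-\sin t\cdot q+\cos t\cdot p)$, one computes directly that $p_t+iq_t=e^{it}(p+iq)$, hence $\Phi([q_t,p_t])=[p+iq]$ in $\mathbb{CP}^n$. For the image identification: when $|p|=|q|=1$ and $\langle p,q\rangle=0$ then $\sum_k(p_k+iq_k)^2=|p|^2-|q|^2+2i\langle p,q\rangle=0$, so the image lies in $FQ_n$. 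Conversely, any $[z]\in FQ_n$ admits a unit-norm representative whose real and imaginary parts $(a,b)$ satisfy, by the Fermat equation, $|a|=|b|$ and $a\perp b$; after rescaling, $(a,b)$ yields a point of $U^*\mathbb{RP}^n$, unique up to the overall phase of $z$, which corresponds precisely to the Reeb $S^1$. Combined with Oakley-Usher's interior identification, $\Phi$ descends to a continuous bijection $\overline{\Phi}:\overline{D^*\mathbb{RP}^n}\to\mathbb{CP}^n(2\pi)$.

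For smoothness at the collapsed boundary I work in coordinates adapted to the symplectic cut. Setting $|p|=1-\rho^2$ with $\rho\in[0,1)$ the radial coordinate on the line bundle $\mathcal{L}$ of Proposition \ref{propbrsd}, one has $\sqrt{1-|p|^2}=\rho\sqrt{2-\rho^2}$, which is smooth in $\rho$ near $\rho=0$; consequently $f(|p|)$ and its square root are smooth functions of $\rho$. Pairing $\rho$ with the Reeb angular coordinate into a complex normal coordinate $z$ for the cut, a direct expansion shows that $\overline{\Phi}$ is smooth in $(z,\bar z)$ together with the base coordinates on the reduced space, with deviation from the boundary value of order $\bar z$ at leading order. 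Given smoothness, the pullback $\overline{\Phi}^{*}\omega_{\mathrm{FS}}$ equals the boundary reduction symplectic form on the dense open complement of the collapsed boundary by Oakley-Usher, and therefore everywhere by continuity.

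The hardest step is precisely the smoothness verification: the function $f(|p|)$ is not smooth in $|p|$ at $|p|=1$ (because of the $\sqrt{1-|p|^2}$ term), so the Oakley-Usher map $\Phi$ does not extend smoothly to $U^*\mathbb{RP}^n$ as a map into $\mathbb{CP}^n$; smoothness is recovered only after collapsing the Reeb action and re-expressing the map in the complex cut coordinate $z$. An alternative abstract route would appeal to the uniqueness part of Weinstein's symplectic neighborhood theorem, matching the normal bundle of $FQ_n$ in $\mathbb{CP}^n$ (the restriction of $\mathcal{O}(2)$) with the normal bundle of the collapsed boundary in $\overline{D^*\mathbb{RP}^n}$ through the bijection $U^*\mathbb{RP}^n/S^1\to FQ_n$, and then glue this with the interior identification.
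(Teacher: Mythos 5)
Your overall architecture matches the paper's: descend to a continuous bijection, then show smoothness at the collapsed boundary, then deduce that $\overline{\Phi}^{*}\omega_{\mathrm{FS}}$ equals the reduced form by continuity from the dense open part. Your boundary identification (that $f(1)=1$, the Reeb rotation $p_t+iq_t=e^{it}(p+iq)$, and the Fermat equation $\sum (p_k+iq_k)^2 = |p|^2-|q|^2+2i\langle p,q\rangle=0$) is correct and clean.

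Where you genuinely diverge is the smoothness step, and this is where the gap lies. The paper's proof has two ingredients you do not supply. First, it reduces the whole verification to $n=1$ via $\mathrm{SO}(n+1)$-equivariance together with an elementary submersion lemma; your "direct expansion" claim never addresses the base variables on $Q$ for $n>1$ (you mention "together with the base coordinates on the reduced space" but give no computation, and the reference to Proposition~\ref{propbrsd} is in fact only stated for $n=2$). Second, even for $n=1$, the paper pins down the radial profile $h$ via its factoring through a disk model and the ODE $h'(x)h(x)=-1/2$. Your proposal instead observes that $\sqrt{1-|p|^2}=\rho\sqrt{2-\rho^2}$ is smooth in $\rho$, and therefore so is $f(|p|)$; but this is not by itself sufficient, because $\rho=|z|$ is \emph{not} a smooth function of the complex cut coordinate $z$ at $z=0$. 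The real content is that the $\rho$-dependence and the Reeb angle combine precisely into smooth functions of $(z,\bar z)$. One can indeed carry this out for $n=1$: with $z=\rho e^{-2i\theta}$ and $\rho=\sqrt{1-|p|}$, the affine coordinate of $\Phi$ simplifies to
\begin{equation*}
 w \;=\; i\,\frac{1 + \dfrac{z}{\sqrt{2-z\bar z}}}{1 - \dfrac{z}{\sqrt{2-z\bar z}}}
\end{equation*}
(up to a constant unimodular factor), which is visibly smooth in $(z,\bar z)$ near $z=0$. But that simplification is not obvious and is exactly the computation your proposal asserts without performing. Until it is written out, and the $n>1$ case is reduced to it (or handled directly with care about local trivializations of $\mathcal{L}$ over $Q$), the smoothness claim is a gap. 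A minor flag: the expansion above gives deviation of leading order $z$, not $\bar z$ as you state; an antiholomorphic leading term would actually be inconsistent with the extension being a symplectomorphism (hence an orientation-preserving immersion) at the collapsed locus. Your suggested Weinstein-neighborhood alternative would also need extra work — one must verify that the induced map $Q\to FQ_n$ is symplectic and that the normal bundles match, and Weinstein gives \emph{some} neighborhood symplectomorphism, not a priori the specific extension $\overline{\Phi}$ — so it does not obviously shortcut the problem.
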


\begin{proof}

Note that $\overline{D^*\mathbb{RP}^n}$ is canonically homeomorphic to the quotient $D^*\mathbb{RP}^n/\sim$, where $x\sim y$ if $x,y\in U^*\mathbb{RP}^n$ and they are in the same orbit of the geodesic flow. Therefore, it is easy to see from the computations of Oakley-Usher that $\Phi$ descends to a bijective continuous map $\overline{\Phi} : \overline{D^*\mathbb{RP}^n} \to \mathbb{CP}^n(2\pi).$ We will show that this map is a symplectomorphism. 

To do so, it suffices to show that $\overline{\Phi}$ is smooth. Indeed, if $\overline{\Phi}$ is smooth, then by continuity it follows that $\overline{\Phi}$ preserves the symplectic form. This in particular shows that $\overline{\Phi}$ is an immersion, and hence a diffeomorphism that preserves the symplectic forms.

The following point is crucial. The canonical (linear) action of the group $G=\mathrm{SO}(n+1)$ on $\mathbb{R}^{n+1}$ induces an action on $D^*\mathbb{RP}^n$ (and in turn on $\overline{D^*\mathbb{RP}^n}$) and $\mathbb{C}^{n+1}$ (and in turn on $\mathbb{CP}^n(2\pi)$). It is clear from the definition (\ref{eq:oakleyusher}) that $\overline{\Phi}$ is $G$-equivariant with respect to these actions. 

We first prove smoothness in the case $n=1$. Equip  $\mathbb{CP}^1(2\pi)$ with the induced Riemannian metric (the so called Fubini-Study metric), which makes it isometric to a round $S^2$. We will use the fact that the image of a linear Lagrangian subspace in $\mathbb{C}^2\setminus \{0\}$ (with standard K\"{a}hler structure) under the canonical projection to  $\mathbb{CP}^1(2\pi)$ is a geodesic circle. We will denote  $\mathbb{CP}^1(2\pi)$ by $\mathbb{CP}^1$ for brevity.

Note that $Z_{\mathbb{RP}^1}$ is sent to the real part $L_{\mathbb{RP}^1}\subset \mathbb{CP}^1$ under $\overline{\Phi}$. Moreover, $\overline{D^*\mathbb{RP}^1}\setminus  \inte(D^*\mathbb{RP}^1)$ consists of two points which map to $[1:\pm i]$. Finally, notice that the images of cotangent fibres (that is, line segments of constant $q$) are sent to geodesic segments connecting $[1: i]$ and $[1:- i]$. It is easy to see that these geodesics are orthogonal to the geodesic circle $L_{\mathbb{RP}^1}$. Also recall that $\overline{\Phi}$ is $\mathrm{SO}(2)$-equivariant as explained above.

Let $(D^*\mathbb{RP}^1)^+:=\{[q,p]\, |\, p_1q_2-p_2q_1\leq 0\}\subset D^*\mathbb{RP}^1.$ Note that $K^\sharp(q,p) = |p|$ is a smooth function when restricted to $(D^*\mathbb{RP}^1)^+$. On $(D^*\mathbb{RP}^1)^+$ the symplectic form is easily computed to be $dK^\sharp \wedge d\theta$, where $\theta: (D^*\mathbb{RP}^1)^+\to \mathbb{R}/\pi\mathbb{Z}$ is defined by the relation $[q,p]=[\cos(\theta),\sin(\theta),-\sin(\theta)|p|,\cos(\theta)|p|]$.

Let $\mathbb{D}\subset \mathbb{R}^2$ be the unit disk $|x|^2+|y|^2\leq 1$ with the symplectic form $dx\wedge dy$. It is well-known that there is a symplectic embedding  $\mathbb{D}\hookrightarrow \mathbb{CP}^1$ which sends:
\begin{itemize}
\item the origin to $[1:i]$;
\item the unit circle to $L_{\mathbb{RP}^1}$;
\item radial rays to the geodesic segments that connect  $L_{\mathbb{RP}^1}$  and $[1:i]$, in a way that preserves angles at the intersections;
\item finally, the disks centered at the origin to balls around $[1:i]$ (in the Fubini-Study metric). 
\end{itemize}

We now consider the induced continuous map $\Pi: (D^*\mathbb{RP}^1)^+\to \mathbb{D}$ defined by the following commutative diagram:

\begin{center}
\begin{tikzcd}
     (D^*\mathbb{RP}^1)^+ \arrow{r}{\Phi} \arrow{rd}[swap]{\Pi} & \mathbb{CP}^1 \\
    & \mathbb{D} \arrow[hookrightarrow]{u}{}
\end{tikzcd}
\end{center}

 Let $(\rho,\phi)$ denote polar coordinates on $\mathbb{D}\setminus \{0\}$. We can deduce from our discussion thus far the following facts:
\begin{itemize}
\item There exists a constant $c$ such that $\Pi^*\phi=-2\theta+c$;
\item There exists a function $h: [0,1)\to (0,1]$ such that $$ \Pi^*\rho=h\circ K^\sharp;$$
\item $\pi$ restricts to a symplectomorphism $(D^*\mathbb{RP}^1)^+\setminus U^*\mathbb{RP}^1 \to \mathbb{D}\setminus \{0\}$. 
\end{itemize}

These facts imply that $h$ satisfies the differential equation $h'(x)h(x)=-1/2$, which, with the initial condition $h(0)=1$, has the unique solution $h(x)=\sqrt{1-x}$.\footnote{This form of $h$ can also be recovered from the explicit forms of $\Phi$ and the embedding $\mathbb{D}\hookrightarrow \mathbb{CP}^1$. } We thus observe that the map $\Pi: (D^*\mathbb{RP}^1)^+\to \mathbb{D}$ is a model for the boundary reduction of $(D^*\mathbb{RP}^1)^+$ at $U^*\mathbb{RP}^1$ (see Equation (3.1) of \cite{Sym03}). This proves that the map $\overline{\Phi}$ is smooth at both points of $\overline{D^*\mathbb{RP}^1}\setminus  \inte(D^*\mathbb{RP}^1)$ as we can repeat the same argument on the other half of $\overline{D^*\mathbb{RP}^1}$.

We now move on to the case $n>1$. We start with a preliminary lemma.

\begin{lemma}
Let $G$ be a Lie group acting smoothly on $M$ and $N$. Suppose that $S$ is a smooth submanifold of $M$ and that the multiplication map $G\times S\to M$ is a surjective submersion. If $\phi: M\to N$ is a $G$-equivariant map and $\phi|_S$ is smooth, then $\phi$ is also smooth. 
\end{lemma}
\begin{proof}
We have the following commutative diagram in which each map is known to be smooth except the bottom map. 
\begin{center}
\begin{tikzcd}

     G\times S \arrow{r}{\mathrm{id}\times \phi} \arrow{d}{\mu_M} & G\times N \arrow{d}{\mu_N} \\
     M \arrow{r}{\phi} & N
\end{tikzcd}
\end{center}

Since the left map is smooth surjective submersion, it has local smooth sections $ M\supset U \to G\times S$. The commutativity then implies that $\phi$ is smooth. 
\end{proof}

Note that the orbits of $G=\mathrm{SO}(n+1)$ on $\overline{D^*\mathbb{RP}^n}$ are the submanifolds of constant $|p|$. Fix any unoriented geodesic circle $\gamma$ in $\mathbb{RP}^n$ with its round metric $g$. We obtain an embedding of $T^*\mathbb{RP}^1$ in $T^*\mathbb{RP}^n$ by taking points $(q,p)$ such that $q\in\gamma$ and $p=g_q(v,\cdot)$ where $v$ is tangent to $\gamma$. This restricts to a smooth embedding of $D^*\mathbb{RP}^1$ into $D^*\mathbb{RP}^n$, and of $\overline{D^*\mathbb{RP}^1}$ into $\overline{D^*\mathbb{RP}^n}$ (the last point is particularly clear in the description of the boundary reductions at hand as in the proof of Proposition \ref{propbrsd}, which we keep in mind for the next point as well.) 

It is easy to see that the multiplication map $G\times \overline{D^*\mathbb{RP}^1} \to \overline{D^*\mathbb{RP}^n}$ is indeed a surjective submersion using that $G\times T^*\mathbb{RP}^1 \to T^*\mathbb{RP}^n$ is one. Applying the lemma with $S= \overline{D^*\mathbb{RP}^1}$, smoothness of $\overline{\Phi}$ follows from the smoothness of the $n=1$ case $\overline{\Phi}_1: \overline{D^*\mathbb{RP}^1}\to \mathbb{CP}^1$.

\end{proof}

\begin{remark}
In \cite[Chapter V]{Sea06}, Seade gives a description of $\mathbb{CP}^n$ as a double mapping cylinder via the natural $\mathrm{SO}(n+1)$ action. One may follow this discussion to obtain the corresponding description of $\overline{D^*\mathbb{RP}^n}$, and that the map $\Phi$ factors as the normal exponential map of $\mathbb{RP}^n\hookrightarrow \mathbb{CP}^n$ (with respect to the Fubini-Study metric) composed with the map $D^*\mathbb{RP}^n \to T^*\mathbb{RP}^n \simeq N\mathbb{RP}^n$ induced by $|p| \mapsto \frac{1}{2}\sin^{-1}|p|$; note that $xf(x) = \tan(\frac{1}{2}\sin^{-1} x)$ and that the focal set of $\mathbb{RP}^n$ is precisely $FQ_n$. This yields an alternative construction of $\Phi$ as well as its extension $\overline{\Phi}$.
\end{remark}

\section{Lipschitz symplectic embeddings of balls} \label{sec:Lipschitz}

In this section, we aim to prove Theorems \ref{thm:Lip_ob} and \ref{thm:Lip_con} from the introduction. To begin we introduce some slightly more general notation, in which we also vary the radius of the cylinder. Suppose that $\Phi \colon B(\pi R^2) \rightarrow \R^4$ is a symplectic embedding with Lipschitz constant $L > 0$. Then we may set
$${E}(\Phi,r) := \Phi^{-1}(\R^4\setminus Z(\pi r^2)).$$
Recall now that Theorem \ref{thm:Lip_ob} is the statement that $\Vol_4\big({E}(\Phi,1)\big) \geq \frac{c}{L^2}$ for some constant $c = c(R) > 0$.

\begin{proof}[Proof of Theorem \ref{thm:Lip_ob}]
Let $\delta$ be any number strictly between $0$ and $R-1$. Observe that by the Lipschitz bound, we have
$$N_{\delta/L}({E}(\Phi,1+\delta)) \subset {E}(\Phi,1).$$

Applying Theorem \ref{thm:obstruct} with 
$${E}= {E}(\Phi,1+\delta)$$
to the symplectic embedding
$$\Phi|_{B(\pi R^2) \setminus {E}(\Phi,1+\delta)} : B(\pi R^2) \setminus {E}(\Phi,1+\delta) \to Z(1+\delta).$$ 
We obtain
$$\Vol_4({E}(\Phi,1)) \geq \pi^2 (R^2 - (1+\delta)^2)\frac{\delta^2}{L^2} - o\left(\frac{\delta^2}{L^2}\right)$$
which implies the desired bound after fixing some value for $\delta$, for instance $\delta = \frac{R-1}{2}$.
\end{proof}

\begin{remark} \label{rmk:scaling}
We may more generally ask about the volume of the region ${E}(\Phi,r)$ for $R > r$. By a scaling argument, we find that
$$\mathrm{Vol}_4({E}(\Phi,r)) \geq \frac{r^4c(R/r)}{L^2}.$$
Hence, we lose no information by restricting to the case $r=1$.
\end{remark}

\begin{remark} \label{rmk:constant}
The proof demonstrates that we may take $c(R) \sim R^4$ as $R$ grows large.
\end{remark}

On the other hand, we wish to prove the constructive bound, in which we must find an embedding $\Phi \colon B^4(R) \rightarrow \R^4$ of Lipschitz constant bounded by $L$ such that
$$\Vol_4\big({E}(\Phi)\big) \leq \frac{C}{L}$$
for some $C = C(R) > 0$. (Recall here ${E}(\Phi) = {E}(\Phi,1)$.)

\begin{proof}[Proof of Theorem \ref{thm:Lip_con}]
The fact that $\Vol_4\big({E}(\Phi)\big) $ can be made arbitrarily small if there is no restriction on the Lipschitz constant is exemplified by ideas of Katok \cite{Kat73}. Our proof is a quantitative refinement obtained using constructions which appear in symplectic folding. The basic idea is to break the ball $B(\pi R^2)$ into a number of cubes and Hamiltonian isotope each of these cubes into $Z(\pi)$, where the cubes are separated by walls of width $1/L$. To begin, we make three simplifications. First, we replace the domain $B(\pi R^2)$ with the cube $K(R) = [-R,R]^4$, as the volume defect can only increase in size. Second, we replace $K(R)$ with the rectangular prism
$$K'(R) = ([0,4R^2] \times [0,1]) \times ([0,4R^2] \times [0,1]),$$
where the parentheses indicate a symplectic splitting. Explicitly, the factors refer to the coordinates $x_1$, $y_1$, $x_2$, $y_2$, with symplectic form $\omega = dx_1 \wedge dy_1 + dx_2 \wedge dy_2$. Notice that the natural symplectomorphism between $K(R)$ and $K'(R)$ has Lipschitz constant $2R$, and in particular, the effect of replacing $K(R)$ with $K'(R)$ only affects our proposition by a factor of $R$ which gets absorbed into the constant $C$. And finally, we allow the Lipschitz constant to be $O_R(L)$, by which we mean it is bounded by $AL$ where $A$ is a constant which again depends upon $R$; we arrive at the proposition as stated by absorbing this constant in $C$.

Consider now each symplectic factor $[0,4R^2] \times [0,1] \subset \R^2$ of $K'(R)$. For $i \in \Z$ with $0 \leq i \leq 4R^2$, let $X_i$ be the region in this rectangle with
$$i \leq x \leq i+1-\frac{1}{L}.$$
Our goal is to fit each $X_i \times X_j \subset K'(R)$ into $Z(\pi )$; indeed the complement of the union of these regions has volume $\Theta(R^4/L)$; the $R^4$ factor gets absorbed by the constant $C$.

To begin, there is an area-preserving map of the rectangle $[0,4R^2] \times [0,1]$ into $\R^2$ of Lipschitz constant at most $O(L)$ which translates $X_i$ in the $x$-direction by $i$. That is, $X_0$ stays fixed, $X_1$ gets shifted to the right by $1$, and the region between them gets stretched like taffy in an area-preserving way to accommodate for this shift. See Figure \ref{fig:separate_horizontal}. Let $Y_i$ be the image of $X_i$ under this map. Then $Y_i$ and $Y_{i+1}$ are separated by distance $1+1/L$, with
$$Y_i = \left[2i,2i+1-\frac{1}{L}\right] \times [0,1].$$

\begin{figure}
	\centering
	\includegraphics[width=\textwidth]{./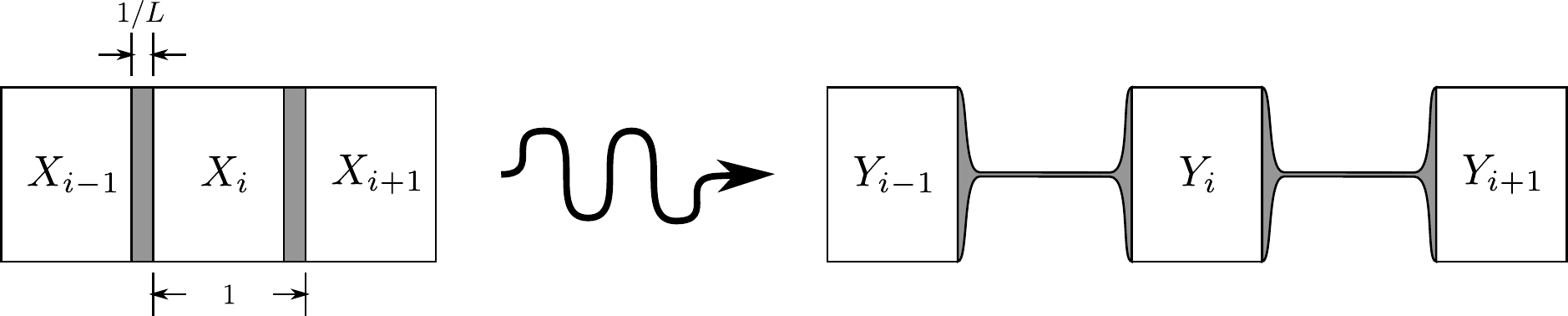}
	\caption{We stretch the region between $X_i$ and $X_{i+1}$ in an area-preserving way so that the images $Y_i$ and $Y_{i+1}$ are separated by distance just over $1$.}
	\label{fig:separate_horizontal}
\end{figure}

Explicitly, a model for the taffy-stretching map is given as $\phi \colon [0,1/L] \times [0,1] \rightarrow [0,1+1/L] \times [0,1]$ of the form
$$\phi(x,y) = \left(f(x), \frac{1}{2} + \frac{y-\frac{1}{2}}{f'(x)}\right)$$
(which is automatically area preserving; see the derivative computed below) with $f \colon [0,1/L] \rightarrow [0,1+1/L]$ a family of functions, depending upon $L$, such that
\begin{itemize}
	\item $f'(x) = 1$ for $x$ in an open neighborhood of the endpoints $0$ and $1/L$
	\item $f(0) = 0$, $f\left(\frac{1}{L}\right) = 1+ \frac{1}{L}$
	\item $1 \leq f'(x) \in O(L)$
	\item $\left|\frac{f''(x)}{(f'(x))^2}\right| \in O(L)$
\end{itemize}
The first two conditions imply that the constructed stretching map $\phi$ glues to the rigid translations of the $X_i$ in a $C^{\infty}$ manner. The latter two conditions imply the desired Lipschitz constant bound of $O(L)$, since
$$D\phi = \begin{pmatrix}f'(x) & 0 \\ -\left(y-\frac{1}{2}\right)\frac{f''(x)}{(f'(x))^2} & \frac{1}{f'(x)}\end{pmatrix},$$
with each entry in this matrix of order $O(L)$.

An example of such a desired function $f$ may be given in the form\footnote{
	There are of course many such choices for function $f$, and we do not claim to make an optimal choice. In an earlier draft, we claimed that the Lipschitz constant of the stretching map could be made less than $2L$. We only need that the Lipschitz constant is $O(L)$ for our purposes, and it is unclear whether $2L$ can be achieved. We thank the reviewer for pointing out this lack of clarity.
}
$$f(x) = \int_0^x \frac{1}{1-Cg(y)}~dy,$$
where $C$ is a constant dependent upon $L$ and $g$ (as we will explain momentarily), and where $g \colon [0,1/L] \rightarrow [0,1/4L]$ is a smooth function which is $C^0$-close to the continuous function
$$g_0(x) = \begin{cases}x & 0 \leq x < 1/4L \\ 1/4L & 1/4L \leq x < 3/4L \\ 1/L - x, & 3/4L \leq x \leq 1/L\end{cases}.$$
We may take $g$ so that $g(x) \equiv 0$ identically near the endpoints $x=0$ and $x=1/L$, and such that $|g'(x)| \leq 1+\epsilon$ for any chosen $\epsilon > 0$. The constant $C$ is chosen so that $f(1/L) = 1+1/L$, i.e. such that
$$I(C) := \int_0^{1/L} \frac{1}{1-Cg(y)}dy = 1+ \frac{1}{L}.$$
We claim that such a constant $C$ exists. Notice that the value of the integral $I(C)$, as a function of $C$, is continuous and monotonically non-decreasing on the interval $[0,1/\sup\{g(x)\})$, with
$$I(0) = \frac{1}{L} < 1+ \frac{1}{L} < \infty = \lim_{C \rightarrow \frac{1}{\sup\{g(x)\}}} I(C),$$
where the limit on the right follows because $g$ attains its maximum value on the interior of the interval $[0,1/L]$, and because $g$ is smooth, we have $g' = 0$ at this maximum value. The existence of $C$ now follows from the intermediate value theorem, and monotonicity implies uniqueness. We notice that because $\sup\{g(x)\} \approx 1/4L$ and $C < 1/\sup\{g(x)\}$, we have that $C \in O(L)$.

With these choices, all of the conditions on $f$ are now met, so long as we take a close enough enough approximation $g$ of $g_0$ (where the closeness depends upon $L$). The first bullet point follows because $g(x) \equiv 0$ near the endpoints. The second follows because we chose $C$ accordingly. The fourth is guaranteed since $$\frac{f''(x)}{(f'(x))^2} = Cg'(x),$$ and we have constructed $g$ so that $|g'(x)| \leq 1+\epsilon$ and $C \in O(L)$.

That leaves the third bullet point, which we now verify. The fact that $f'(x) \geq 1$ is clear, so it suffices to check $f'(x) \in O(L)$. Solving for $C_0$ using $g_0$, we find
\begin{align*}
	1+\frac{1}{L} &= \int_0^{1/L} \frac{1}{1-C_0g_0(x)}dx \\
		&> \int_{1/4L}^{3/4L} \frac{1}{1-C_0/4L}dx \\
		&= \frac{1}{2L} \cdot \frac{1}{1-C_0/4L},
\end{align*}
so that $C_0 < 4L(1- \frac{1}{2(L+1)})$. Notice that the value of $C$ depends continuously on the function $g$ (in the $C^0$-topology), so for any $\epsilon > 0$ there is a choice of approximation $g$ so that
\begin{align*}
	\sup\{f'(x)\} &= \sup \left\{\frac{1}{1-Cg(x)}\right\}\\
		&< \sup\left\{\frac{1}{1-C_0g_0(x)}\right\} + \epsilon\\
		&= \frac{1}{1-C_0/4L} + \epsilon \\
		&< 2(L+1)+\epsilon.
\end{align*}
Hence, $f'(x) \in O(L)$ as required.

Applying the stretching map to each symplectic factor, each region $X_i \times X_j$ is sent to $Y_i \times Y_j$. It suffices now to find a symplectomorphism of $\R^4$ of Lipschitz constant $O_R(1)$ so that each $Y_i \times Y_j$ has image in the cylinder $Z(\pi)$, since in such a case, we compose with our stretching map, and each $X_i \times X_j$ under this composition lands in $Z(\pi )$, where the composition map has total Lipschitz constant $O_R(L)$. We construct this by sliding each $Y_i \times Y_j$ in two steps. We begin by separating the $y_2$-coordinates of the various blocks based on their $x_1$-coordinates. That is, we translate $Y_i \times Y_j$ in the $y_2$-direction by $2i$ units, in other words so that it gets translated to
$$\left[2i,2i+1-\frac{1}{L}\right] \times [0,1] \times \left[2j,2j+1-\frac{1}{L}\right] \times [2i,2i+1].$$
Explicitly, we use a Hamiltonian of the form $H = -\rho(x_1)x_2$ where $\rho$ is a step function with the following properties:
\begin{itemize}
	\item $\rho(x_1) = 2i$ when $2i \leq x_1 \leq 2i+1$, $0 \leq i \leq 4R^2$
	\item $\|\rho'\|_{\infty} \in O(1)$
	\item $\|\rho''\|_{\infty} \in O(1)$
\end{itemize}
The corresponding Hamiltonian vector field $X_H$ is of the form $2i\partial_{y_2}$ when $2i \leq x_1 \leq 2i+1$, and hence translates $Y_i \times Y_j$ as desired. Explicitly, the time-1 Hamiltonian flow is
$$\phi_H^1(x_1,y_1;x_2,y_2) = (x_1,y_1 + \rho'(x_1)x_2; x_2, y_2+\rho(x_1)).$$
with derivative
$$D\phi_H^1 = \begin{pmatrix}1 & 0 & 0 & 0\\ \rho''(x_1)x_2 & 1 & \rho'(x_1) & 0 \\ 0 & 0 & 1 & 0\\ \rho'(x_1) & 0 & 0 & 1 \end{pmatrix}.$$
All terms are $O(1)$ except for $\rho''(x_1)x_2$, because $x_2$ can grow large. But on the image of our cube after the taffy-stretching step, $x_2$ is at most $4R^2$, and so the relevant Lipschitz constant of this sliding step is $O(R^2)$.

A similar construction, using a Hamiltonian of the form $-\rho(y_2)y_1$ for the same function $\rho$, allows us to then take each of these new blocks and translate them in the $x_1$-direction. After we complete both of these steps, the image of $Y_i \times Y_j$ is
$$\left[0,1-\frac{1}{L}\right] \times [0,1] \times \left[2j,2j+1-\frac{1}{L}\right] \times \left[2i,2i+1\right].$$
A final translation simultaneously in the $x_1$ and $y_1$-coordinates by $-\frac{1}{2}$ lands each of these blocks in the cylinder $Z(\pi)$, concluding the proof.
\end{proof}

\begin{remark} \label{rmk:scaling_2} As in Remark \ref{rmk:scaling}, we may also vary the radius of the cylinder $r$, but where the new constant is $r^4C(R/r)$.
\end{remark}

\begin{remark} \label{rmk:constant_2}
The construction as presented has $C(R) \sim R^9$. Indeed, our volume defect came with a factor of $R^4$, but the Lipschitz constant has an extra factor of $R^5$. The first factor of $R$ appearing in the Lipschitz constant came from replacing $K(R)$ with $K'(R)$. An extra two factors of $R^2$ came from our slide moves. One can optimize a little by performing a single diagonal slide move instead of two separate orthogonal slide moves. Hence, in the end, we may take $C(R) \sim R^7$. We suspect this is far from optimal, though decreasing the exponent appears to require a new idea.
\end{remark}

\section{Further questions} \label{sec:questions}

\subsection{Minkowski dimension problem in higher dimensions} \label{ssec:higher_dims}

In this section we pose the simplest Minkowski dimension question that one could ask in dimensions higher than four and make a couple of remarks about it. Let us assume $n>2$ throughout this section.

\begin{question}
What is the smallest $d\in\mathbb{R}$ such that for some $A>\pi$, there exists a closed subset $E\subset B^{2n}(A)$ of Minkowski dimension $d$ so that $B^{2n}(A)\setminus E$ symplectically embeds into $Z^{2n}(\pi)$?
\end{question}

Assume that for some $A>\pi$ and a closed subset $E\subset B^{2n}(A)$ of Minkowski dimension $d$, $B^{2n}(A)\setminus E$ symplectically embeds into $Z^{2n}(\pi).$ We find it plausible that a version of our obstructive argument would still give $d\geq 2$, even though we do not have a proof of this. 

The argument in Proposition \ref{foliation!2} suggests that the problem is related to the question of how the $2$-width of a round ball changes after the removal of a closed subset.  One can explicitly see that for $E$ being the intersection of $B^{2n}(A)$ with a linear Lagrangian subspace $L$ of $\mathbb{R}^{2n}$, an $n$-dimensional submanifold, there is a (holomorphic!) sweepout of $B^{2n}(A)\setminus E$ with width $A/2$. Namely, we take the foliation by half-disks that are the connected components of the intersections with $B^{2n}(A)\setminus E$ of affine complex planes that intersect $L$ non-transversely. 
%
%
%

\subsection{Capacity after removing a linear plane}\label{sscapquestion}
Throughout this section let $B:=B^4(2\pi)\subset \mathbb{C}^2$, where $\mathbb{C}^2$ is equipped with its standard K\"{a}hler structure. Let us denote the complex coordinates by $x$ and $y$.

Let us denote by $c_{Gr}$ the Gromov width, which is a capacity defined on any symplectic manifold $Y^{2n}$ as the supremum of $\pi r^2$, where $B^{2n}(\pi r^2)$ symplectically embeds into $Y$.

\begin{definition}
Let $V\subset \mathbb{C}^2$ be a real subspace of dimension $2$. We define the symplecticity of $V$ as $|\omega_{st}(e_1,e_2)|$, where $e_1,e_2$ is any orthonormal basis of $T_0V$. 
\end{definition}

Notice that $V$ is a Lagrangian plane if and only if its symplecticity is $0$. On the other extreme, $V$ is a complex plane if and only if its symplecticity is $1$. 

The symplecticity defines a surjective continuous function $$\mathrm{Gr}_{\mathbb{R}}(2,4)\to [0,1].$$
\begin{lemma}
Two elements of $\mathrm{Gr}_\mathbb{R}(2,4)$ have the same symplecticity if and only if there is an element of $U(2)$ sending one to the other.
\end{lemma}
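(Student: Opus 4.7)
The plan is to bring an arbitrary real $2$-plane in $\mathbb{C}^2$ into a one-parameter normal form via $U(2)$, where the parameter coincides with the symplecticity.

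One direction is immediate. Any $U\in U(2)$ preserves both the Euclidean inner product and $\omega_{st}$, so it carries an orthonormal basis $(e_1,e_2)$ of $V$ to an orthonormal basis $(Ue_1,Ue_2)$ of $U(V)$, and $\omega_{st}(Ue_1,Ue_2)=\omega_{st}(e_1,e_2)$; hence the symplecticity is a $U(2)$-invariant.

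For the converse, I introduce the standard family
\[
V_s:=\mathrm{span}_{\mathbb{R}}\!\left((1,0,0,0),\;(0,s,\sqrt{1-s^2},0)\right), \qquad s\in[0,1],
\]
and check by direct computation with $\omega_{st}=dx_1\wedge dy_1+dx_2\wedge dy_2$ that the symplecticity of $V_s$ equals $s$. It then suffices to prove that any $V\in\mathrm{Gr}_\mathbb{R}(2,4)$ with symplecticity $s$ can be moved to $V_s$ by some element of $U(2)$. To start, pick a unit vector $e_1\in V$ and exploit the transitivity of $U(2)$ on the unit sphere $S^3\subset\mathbb{C}^2$ (complete $e_1$ to a Hermitian orthonormal basis to produce the required unitary) to apply some $U\in U(2)$ sending $e_1$ to $(1,0,0,0)$. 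The image $U(V)$ then has an orthonormal basis $\{(1,0,0,0),f_2\}$ where Euclidean orthogonality forces $f_2=(0,a,x,y)$ with $a^2+x^2+y^2=1$; moreover, $a=\omega_{st}((1,0,0,0),f_2)$.

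Next I use the remaining $U(1)$-stabilizer of $(1,0,0,0)$ in $U(2)$, namely the subgroup $\{\mathrm{diag}(1,e^{i\theta})\}$, which acts on $\mathbb{R}^4$ by rotating the $(x_2,y_2)$-plane while fixing the $(x_1,y_1)$-plane, in order to rotate $(x,y)$ to $(b,0)$ with $b=\sqrt{x^2+y^2}\geq 0$. This brings $V$ to $\mathrm{span}((1,0,0,0),(0,a,b,0))$ with $a^2+b^2=1$. If $a\geq 0$ this is already $V_a=V_s$; if $a<0$, I apply $\mathrm{diag}(1,-1)\in U(2)$, which fixes $(1,0,0,0)$ and sends $(0,a,b,0)\mapsto(0,a,-b,0)$, and then replace this second basis vector by its negative to obtain $(0,|a|,b,0)$, exhibiting $V$ as $U(2)$-equivalent to $V_{|a|}=V_s$.

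I do not anticipate a serious obstacle: the argument reduces to linear algebra in $\mathbb{R}^4$ together with the transitivity of $U(2)$ on $S^3$. The one subtlety is that after fixing $e_1=(1,0,0,0)$ the residual freedom is just the $U(1)$-stabilizer, which cannot flip the sign of $a$ by itself; the flip has to be arranged by combining the auxiliary element $\mathrm{diag}(1,-1)\in U(2)$ with a relabelling of the orthonormal basis of $V$, as described above.
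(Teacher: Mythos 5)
The paper states this lemma without proof (it appears in the ``Further questions'' section), so there is no argument to compare against. Your proof is correct and gives a clean, elementary normal-form argument: the forward direction is immediate from $U(2)\subset O(4)\cap\mathrm{Sp}(4,\R)$, and for the converse you use transitivity of $U(2)$ on $S^3$ to place a unit vector at $(1,0,0,0)$, then the residual $U(1)$-stabilizer to rotate $(x_2,y_2)$ so the second orthonormal basis vector becomes $(0,a,b,0)$, and finally $\mathrm{diag}(1,-1)$ together with a change of basis of the plane to arrange $a\geq 0$. All the computations check out: the symplecticity of $V_s$ is $s$, the stabilizer of $(1,0,0,0)$ in $U(2)$ is indeed $\{\mathrm{diag}(1,e^{i\theta})\}$, and that subgroup fixes $(x_1,y_1)$ while rotating $(x_2,y_2)$, so $a$ is preserved while $(x,y)$ can be rotated to $(b,0)$. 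The one subtlety you flag --- that the residual $U(1)$ cannot change the sign of $a$ --- is handled correctly. This fills a gap the paper leaves implicit.
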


We can therefore define the function (symplecticity to capacity) $$\mathrm{stc}: [0,1]\to (0,\infty), $$ by $s\mapsto c_{Gr}(B\setminus V_s)$, where $V_s\in \mathrm{Gr}_{\mathbb{R}}(2,4)$ with symplecticity $s$.

In Proposition 5.2 of \cite{traynor}, the following remarkable statement is proved.

\begin{proposition}[Traynor]
$B$ is symplectomorphic to $$B\setminus (\{xy=0\}\cup \{(e^{it}x,y)\cup (x, e^{it}y)\mid \Im(x)=\Im(y)=0, \Re(x)\geq 0,\Re(y)\geq 0,t\in [0,2\pi]\}).$$
\end{proposition}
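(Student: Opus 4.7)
The first order of business is to simplify the removed set. Working out the parametrization, the conditions $\Im(x)=\Im(y)=0$ and $\Re(x),\Re(y)\geq 0$ restrict to $x,y\in\mathbb{R}_{\geq 0}$; as $t$ ranges, $(e^{it}x,y)$ sweeps out $\{(z,w)\in \mathbb{C}^2 : w\in\mathbb{R}_{\geq 0}\}$, and similarly for $(x,e^{it}y)$. Since any point of $\{xy=0\}$ has either $x=0\in\mathbb{R}_{\geq 0}$ or $y=0\in\mathbb{R}_{\geq 0}$, the complex axes are already in these two swept sets. Hence the removed set reduces to
\[
S \;=\; \{(x,y)\in\mathbb{C}^2 : x\in\mathbb{R}_{\geq 0}\}\;\cup\;\{(x,y)\in\mathbb{C}^2 : y\in\mathbb{R}_{\geq 0}\},
\]
so that $B\setminus S = B\cap (\mathbb{C}\setminus\mathbb{R}_{\geq 0})^2$. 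I would then verify that $B\setminus S$ is simply connected: the slit plane $\mathbb{C}\setminus\mathbb{R}_{\geq 0}$ is star-shaped from any point $-\delta$ with $\delta>0$, and $B$ is convex, so $B\setminus S$ is star-shaped from $(-\delta,-\delta)$ for small $\delta$. Both $B$ and $B\setminus S$ therefore have the same $4$-volume $2\pi^2$, are simply connected, and are diffeomorphic to $\mathbb{R}^4$.

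The main step is to recognize that $B\setminus S$ admits a global symplectic chart in symplectic polar coordinates. Writing $I_i=|z_i|^2/2$ and $\theta_i=\arg z_i\in (0,2\pi)$ for $i=1,2$, we get a symplectomorphism
\[
B\setminus S \;\cong\; \triangle^{\circ}\times (0,2\pi)^2, \qquad \omega=dI_1\wedge d\theta_1+dI_2\wedge d\theta_2,
\]
where $\triangle^{\circ}=\{(I_1,I_2):I_i>0,\; I_1+I_2<1\}$. In 2D, the analogous statement, that a disk $D$ is symplectomorphic to its slit $D\setminus[0,R)$, is already nontrivial and proved by showing the slit disk is simply connected (via $z\mapsto \sqrt{z}$) and has the same area, then invoking Moser. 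The plan is to upgrade this 2D slit-opener to a 4D symplectomorphism $\Phi : B\to B\setminus S$ via two successive slit-openings: first an $x$-slit-opener $\Phi_1\colon B\to B\cap(\{x\notin\mathbb{R}_{\geq 0}\}\times\mathbb{C})$, then a $y$-slit-opener $\Phi_2$ producing $\Phi=\Phi_2\circ\Phi_1$. Each $\Phi_i$ is given by a time-$1$ Hamiltonian/generating-function construction that extends the 2D slit-opener to the $4$-dimensional ball.

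The main technical obstacle is precisely this last extension. A naive fiberwise slit-opening (applying a 2D area-preserving diffeomorphism in the $y$-disk over each fixed $x$) fails to be symplectic with respect to the full form $dp_1\wedge dq_1+dp_2\wedge dq_2$: the cross derivatives with respect to $x$ contribute terms of the form $dp_1\wedge dp_2$, $dq_1\wedge dp_2$, etc., which must vanish. The remedy I would pursue is to choose the $y$-slit-opener so that it depends on $x$ only through the modulus $|x|^2$, exploiting the $S^1$-symmetry in $x$ so that the offending cross-terms cancel. Equivalently, one constructs the symplectomorphism via a generating function of the form $F(I_1,I_2,\widetilde\theta_1,\widetilde\theta_2)$ that implicitly defines the map and is automatically symplectic. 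Carrying this out while simultaneously respecting the coupling $|x|^2+|y|^2<2$ imposed by $B$ is the delicate point; the payoff is that the resulting map sends $B$ diffeomorphically onto $B\setminus S$ while preserving $\omega$, which by Moser applied to the residual discrepancy can always be absorbed.
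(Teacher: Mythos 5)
The paper does not prove this proposition; it is cited verbatim from [Traynor, Prop. 5.2], so there is no in-paper proof to compare against. Evaluating your proposal on its own terms:

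Your reduction of the removed set to $S=\{x\in\R_{\geq 0}\}\cup\{y\in\R_{\geq 0}\}$ is correct, as is the identification of $B\setminus S$ with $\triangle^{\circ}\times(0,2\pi)^2$ under symplectic polar (action--angle) coordinates, and the volume check. The strategy of opening up the two toric slits is also the right framework. However, the central step --- the construction of the $4$-dimensional slit-openers --- is where the proposal has a genuine gap, and the two remedies you suggest do not close it.

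First, the proposed fix for the cross-terms fails. Suppose the $y$-slit-opener is of the form $(x,y)\mapsto(x,\phi(y,\bar y,I_1))$ with $I_1=|x|^2/2$, as you propose. Writing $w=\phi$ and $P=\partial_{I_1}\phi$, one has $dI_1=\tfrac12(\bar x\,dx+x\,d\bar x)$, and the $dx\wedge d\bar x$-component of $dw\wedge d\bar w$ comes out to $\tfrac{|P|^2}{4}(\bar x^2-x^2)\,dx\wedge d\bar x$, which does not vanish unless $P\equiv 0$. So depending on $x$ only through $|x|^2$ does not make the cross-terms cancel; it actually forces the map to not depend on $|x|^2$ at all, which is incompatible with the ball constraint. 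Second, a $y$-independent $x$-slit-opener cannot respect that constraint either: if $\phi$ is area-preserving on each $x$-disk and $|\phi(x)|\leq|x|$ is required so that $|\phi(x)|^2+|y|^2<2$, then area-preservation forces $|\phi(x)|=|x|$, so $\phi$ is a rotation-by-radius and cannot open the slit. Thus the slit-openers must genuinely mix $x$ and $y$, and you have not exhibited such a map. The generating-function route is a sensible alternative, but it only produces a symplectomorphism where the action--angle coordinates are valid, i.e.\ away from $\{xy=0\}$; the real work is showing that the map extends smoothly across the degenerate locus, which is not addressed.

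Finally, the closing appeal to Moser is a red flag. "Same volume and diffeomorphic to $\R^4$, therefore Moser absorbs the residual discrepancy" is exactly the kind of statement that fails in dimension $\geq 4$: $B(1)$ and $B(2)$ are diffeomorphic, cohomologically indistinguishable, and related by an obvious area-form rescaling, yet are not symplectomorphic --- non-squeezing is the whole point of the paper. Moser's trick on open manifolds requires control of the primitive at the boundary/infinity, and since the main map has not actually been constructed, the "residual discrepancy" is unspecified and the Moser step cannot be checked. In short, the setup is right but the key construction is missing, and both proposed patches for it are incorrect or unjustified.
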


Hence, we have that $\mathrm{stc}(1)=2\pi$. On the other hand, it follows immediately from our Theorem \ref{thmsymplectomorphism} that $\mathrm{stc}(0)=\pi$. We finish with the obvious question.
\begin{question}
What is the function $\mathrm{stc}$? Is it continuous?

\end{question}

\subsection{Minkowski dimension problem for large R} \label{subsec:largeR}
Let $R_\mathrm{sup}\in(1,\infty]$ be the supremum of the radii $R$ such that there is a closed subset $E$ of Minkowski dimension $2$ inside $B(\pi R^2) $ whose complement symplectically embeds into $Z(\pi)$. In Section \ref{sec:construction}, we showed that $R_\mathrm{sup}\leq \sqrt{2}$. This inequality will be improved by Jo\'{e} Brendel to $R_\mathrm{sup}\leq \sqrt{3}$ using a different construction (see Remark \ref{rem-joe}).
An intriguing aspect of both of the squeezing constructions  is that they fail for large radii $R$. This motivates the following:

\begin{question}
Is $R_\mathrm{sup}$ a finite number?
\end{question}

\subsection{Minkowski dimension problem for extendable embeddings}

Here is a variant of our Minkowski dimension question, which is also more directly related to the Lipschitz number question. Assume $R>r$. What is the smallest Minkowski dimension of a subset ${E}\subset B^4(\pi R^2)$ with the property that for any neighborhood $U$ of ${E}$, there is a symplectic embedding $B^4(\pi R^2)\to \mathbb{C}^2$ such that $B^4(\pi R^2)\setminus U$ maps inside $Z^4(\pi r^2)$?

Our obstructive Theorem \ref{thm:minkowski_obstruct} still gives a bound, but our construction in Section \ref{sec:construction} does not apply. Recall $B:=B^4(2\pi)$.

\begin{proposition}
Consider the embedding $B\setminus L\hookrightarrow Z(\pi)$ that we constructed in Theorem \ref{thmsymplectomorphism}. There exists an embedded circle $\eta$ in $iL\cap B$ that is disjoint from $L$, and which maps into $\{x_1=y_1=0\}$.
\end{proposition}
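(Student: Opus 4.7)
\emph{Proof plan.}  The strategy is to exhibit an explicit candidate circle in $iL\cap B$ and trace it through the chain of symplectomorphisms used in the proof of Theorem \ref{thmsymplectomorphism}.  The natural choice is
\[
\eta \;:=\; \{(ib,\,id) \;:\; b,d\in\mathbb{R},\; b^2+d^2=1\}\;\subset\; iL\cap B.
\]
This is evidently an embedded circle contained in $iL\cap B$ (since $b^2+d^2=1<2$), and it is disjoint from $L$ because $L\cap iL=\{0\}$ while $0\notin \eta$.

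Next, I would compose with the standard Hopf-type symplectomorphism $B(2\pi)\setminus L \to \mathbb{CP}^2(2\pi)\setminus(S_{[0:0:1]}\cup L_{\RP})$ of Exercise~9.4.11 of \cite{MS12}, namely
\[
(x,y)\longmapsto \bigl[x : y : \sqrt{2-|x|^2-|y|^2}\bigr].
\]
Under this map $\eta$ is sent to $\{[ib:id:1] : b^2+d^2=1\}$, and a one-line computation gives $(ib)^2+(id)^2+1^2 = 1-(b^2+d^2)=0$, so the image lies on the Fermat quadric $FQ$.  Note that the third homogeneous coordinate is identically $1$, so this circle avoids $S_{[0:0:1]}$ entirely, and in particular avoids the two points $S_{[0:0:1]}\cap FQ=\{[1:\pm i:0]\}$.

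Applying Proposition \ref{extension}, the Oakley--Usher map identifies $FQ$ with $Q\subset\overline{D^*\RP}$, so the image of $\eta$ is a circle on $Q$ disjoint from $S\cap Q = \{\gamma_0, -\gamma_0\}$ (using that these two points correspond to $[1:\pm i:0]$).  Finally, the symplectomorphism $\overline{D^*\RP}\setminus(S\cup Z_{\RP})\to \mathcal{E}(\pi,4\pi)\setminus \mathcal{C}$ from the last proposition of Section \ref{sec:construction} is obtained via Opshtein's Lemma 2.1 by extending a symplectomorphism of the bases of the two standard symplectic disk bundle structures: the base $Q\setminus\{\gamma_0\}$ of $\overline{D^*\RP}\setminus Z_{\RP}$ on one side, and the $2$-dimensional open ellipsoid of area $4\pi$ sitting inside $\mathcal{E}(\pi,4\pi)$ as its zero section on the other.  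By construction this zero section is exactly $\{x=0\}\cap \mathcal{E}(\pi,4\pi) = \{x_1=y_1=0\}\cap \mathcal{E}(\pi,4\pi)$, and our circle, living on $Q\setminus\{\gamma_0\}$, therefore maps into $\{x_1=y_1=0\}$ as required.

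The main point to be careful about is the very last identification: one has to confirm that Opshtein's extension of a base symplectomorphism to the surrounding disk bundles indeed sends zero section to zero section, and that the ``standard'' zero section of $\mathcal{E}(\pi,4\pi)$ in this presentation is literally $\{x_1=y_1=0\}\cap \mathcal{E}(\pi,4\pi)$ (rather than some other area-$4\pi$ disk).  Once this is unpacked from the setup of Section \ref{sec:construction}, the rest of the argument is an explicit verification.
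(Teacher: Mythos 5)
Your proposal is correct and takes essentially the same route as the paper: your circle $\eta=\{(ib,id):b^2+d^2=1\}$ is exactly the preimage under the Hopf-type map of the paper's circle $\hat{C}=\{[i\cos\theta:i\sin\theta:1]\}=FQ\cap iL_{\RP}$, and you trace it through the same chain of symplectomorphisms (MS12 ball chart, Oakley--Usher identification sending $FQ$ to $Q$, Opshtein's disk-bundle lemma) to land in $\{x_1=y_1=0\}$. The one point you flag at the end (that Opshtein's construction sends the base $Q\setminus\{\gamma_0\}$ into $\{x_1=y_1=0\}$) is also what the paper relies on, citing Opshtein's explicit formula, so this is a matter of degree of explicitness rather than a gap.
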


\begin{proof}
Recall that our symplectomorphism first sends $B\setminus L$ to $\mathbb{C}\mathbb{P}^2(2\pi)\setminus (L_{\RP}\cup S_{[0:0:1]})$ via the restriction of a symplectomorphism $b:B\to \mathbb{C}\mathbb{P}^2(2\pi)\setminus S_{[0:0:1]}$. The image of $L\cap B$ under $b$ is $L_{\RP}\setminus S_{[0:0:1]}$, whereas the image of ${iL\cap B}$ is $iL_{\RP}\setminus S_{[0:0:1]}$, where we define $$iL_{\RP}:=\{[z_1:z_2:z_3]\mid \Re(z_1)=\Re(z_2)=\Im(z_3)=0\}.$$ 

$FQ$ and $iL_{\RP}$ intersect along the circle $$\hat{C}=\{[i\cos{\theta}:i\sin{\theta}:1]\mid \theta\in [0,2\pi]\}.$$
Note that $\hat{C}$ is disjoint from $L_{\RP}\cup S_{[0:0:1]}$. 

Now recall that to complete our symplectomorphism from $B\setminus L$ to $\mathcal{E}(\pi,4\pi)\setminus \mathcal{C}$ we use the Oakley-Usher symplectomorphism between $\mathbb{C}\mathbb{P}^2(2\pi)\setminus (L_{\RP}\cup S_{[0:0:1]})$ and  $\overline{D^*\mathbb{RP}^2}\setminus (Z_{\RP}\cup S)$ and then the Opshtein symplectomorphism.

From Opshtein's formula we see that all the points on $Q\setminus (Z_{\RP}\cup S)$ and therefore on $FQ\setminus (L_{\RP}\cup S_{[0:0:1]})$ are sent to points in $\{x_1=y_1=0\}$ in $\mathcal{E}(\pi,4\pi)$. This means that $b^{-1}(\hat{C})$ satisfies the condition in the statement.
\end{proof}

\begin{corollary}\label{cornonextend} Let $U$ be a neighborhood of $L$ that is disjoint from $\eta$. Then, we cannot extend the restriction to $B\setminus U$ of our symplectic embedding $B\setminus L$ into $Z(\pi)$ to a symplectic embedding $B\to\mathbb{C}^2$.
\end{corollary}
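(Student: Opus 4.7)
The plan is to derive a contradiction from exactness of $\omega$ on $\mathbb{C}^2$, exploiting the asymmetry between the Lagrangian source containing $\eta$ (namely $iL$) and the symplectic plane containing $\Phi(\eta)$ (namely $P := \{x_1 = y_1 = 0\}$). Fix a primitive $\lambda$ with $\omega = d\lambda$ on $\mathbb{C}^2$, and suppose for contradiction that a symplectic embedding $\tilde{\Phi}: B \to \mathbb{C}^2$ extends $\Phi|_{B \setminus U}$.

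First I would observe that $\eta$ bounds a smooth $2$-disk $D_\eta$ inside the Lagrangian $2$-disk $iL \cap B$ (concretely, the inner region cut out by $\eta$ inside the round $2$-disk $iL \cap B$). Since $iL$ is Lagrangian, $\omega$ vanishes on $D_\eta$. Note also that $\tilde{\Phi}$ agrees with $\Phi$ on $\eta \subset B \setminus U$. Using $\tilde{\Phi}^*\omega = \omega$ and Stokes' theorem, we obtain
\[
\int_{\Phi(\eta)} \lambda \;=\; \int_{\tilde{\Phi}(D_\eta)} \omega \;=\; \int_{D_\eta} \tilde{\Phi}^*\omega \;=\; \int_{D_\eta} \omega \;=\; 0.
\]

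Next I would compute the same integral $\int_{\Phi(\eta)} \lambda$ a second way, via a disk lying inside $P$. Since $\omega|_P = dx_2 \wedge dy_2$ is a nondegenerate area form, and since $\Phi(\eta) \subset P$ is a smoothly embedded simple closed curve (being the image of the embedded circle $\eta$ under the diffeomorphism $\Phi$), the Jordan curve theorem applied inside $P \cong \mathbb{R}^2$ produces a $2$-disk $D' \subset P$ with $\partial D' = \Phi(\eta)$, oriented compatibly. Another application of Stokes then gives
\[
\int_{\Phi(\eta)} \lambda \;=\; \int_{D'} \omega \;=\; \pm\,(\text{Euclidean area enclosed by } \Phi(\eta)) \;\neq\; 0,
\]
contradicting the previous computation.

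The only delicate point is verifying that the Euclidean area enclosed by $\Phi(\eta)$ in $P$ is strictly positive, but this is essentially automatic: $\Phi$ is a diffeomorphism onto its image and $\eta$ is a smoothly embedded circle, so $\Phi(\eta)$ is a smoothly embedded simple closed curve in $P \cong \mathbb{R}^2$, and by the Jordan curve theorem it bounds a region of strictly positive planar area. Thus the entire argument reduces to two applications of Stokes' theorem together with the observation that $iL$ is Lagrangian while $P$ is symplectic; there is no serious technical obstacle.
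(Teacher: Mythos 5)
Your proof is correct and is precisely the argument the paper has in mind: the paper's one-line proof asserts that the action $\int \lambda$ of the image of $\eta$ would have to be both zero (since $\eta$ bounds a disk in the Lagrangian $iL \cap B$, transported by the hypothetical global symplectic extension) and nonzero (since $\Phi(\eta)$ is an embedded circle in the symplectic plane $\{x_1 = y_1 = 0\}$, enclosing positive area). You have simply unpacked that terse statement into a complete two-sided computation via Stokes' theorem.
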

\begin{proof}
If there were such an embedding, the action of the image of $\eta$ would have to be simultaneously zero and nonzero, which is absurd. 
\end{proof}

\subsection{Bounds on Minkowski content of the defect region}

We have shown in Corollary \ref{cor:dimension} that the lower Minkowski dimension bound $\underline{\dim}_\mathcal{M} ({E})\geq 2$ is optimal in the range $R\in (1,\sqrt{2}]$. Is the estimate on the 2-content $\underline{\mathcal{M}}_{2} ({E}) \geq \pi(R^2-1)$ also sharp? That is, does there exist $R\in(1,\sqrt{2}]$ and ${E}$ with $\underline{\mathcal{M}}_{2} ({E}) = \pi(R^2-1)$ such that $B(\pi R^2)\setminus {E}$ symplectically embeds into $Z(\pi)$?

\subsection{Speculations on the Lipschitz question}

Consider the volume loss function
$$\mathrm{VL}(L,R) := \inf \left\{\Vol({E}(\Phi))\right\},$$
where the infimum is taken over all symplectic embeddings $\Phi \colon B^{4}(\pi R^2) \rightarrow \R^4$ with Lipschitz constant at most $L$, and 
$${E}(\Phi) = \Phi^{-1}(\R^{4} \setminus  Z^{4}(\pi)).$$
In Section \ref{sec:Lipschitz}, we proved Theorems \ref{thm:Lip_ob} and \ref{thm:Lip_con}, which may be summarized by the statement that
$$\frac{c(R)}{L^2} \leq \mathrm{VL}(L,R) \leq \frac{C(R)}{L}$$
(where we tacitly assume $R > 1$). Even more, we noted in Remarks \ref{rmk:constant} and \ref{rmk:constant_2} that our methods show that we may take $c(R) \sim R^4$ and $C(R) \sim R^7$.

One natural question is whether we can bring the two bounds closer together. In terms of factors of $R$, we suspect that the asymptotics should indeed be $\Theta(R^4)$, i.e. growing like the total volume, though we could not find constructions which remain under this upper bound for large $L$. As for the factors of $L$, the jury is very much out. We nonetheless formulate a precise conjecture.

\begin{conjecture} \label{bb}
For any $\ell,r > 0$, the limit
$$\lim_{R \rightarrow \infty} \lim_{L \rightarrow \infty} \frac{\mathrm{VL}(\ell L, r R)}{\mathrm{VL}(L,R)}$$
exists and is positive.
\end{conjecture}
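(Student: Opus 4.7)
The plan is to establish precise power-law asymptotics for $\mathrm{VL}(L, R)$ of the form
\[
\mathrm{VL}(L, R) = c_0 \, R^\beta \, L^{-\alpha} \, (1 + o(1)) \quad \text{as } L, R \to \infty,
\]
for exponents $\alpha \in [1, 2]$ and $\beta \in [4, 7]$ consistent with the bounds already established in Section \ref{sec:Lipschitz}. Once such asymptotics are in hand, the conjecture follows immediately: the ratio $\mathrm{VL}(\ell L, r R)/\mathrm{VL}(L, R)$ tends to $r^\beta \ell^{-\alpha}$, which is positive. So the task reduces to proving bi-regular-variation of $\mathrm{VL}$.

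The first step is to pin down the $L$-exponent. For fixed $R$, I would try to show that $L^\alpha \mathrm{VL}(L, R)$ converges to a positive quantity $\mathrm{VL}^\infty(R)$. On the obstructive side, the waist-inequality argument underlying Theorem \ref{thm:obstruct} would be iterated: the large fiber produced by the waist inequality should itself be analyzed by a secondary waist-type argument on affine $2$-planes inside the defect tube, potentially improving the $1/L^2$ obstruction. On the constructive side, multi-scale folding in the spirit of Schlenk's multiple symplectic folding (cf.\ Remark \ref{remark:schlenk}) would be used, performing taffy-stretches at a geometric sequence of Lipschitz scales $L, L^{1/2}, L^{1/4}, \ldots$ so as to trade off separation costs against translation costs more efficiently. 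The two sides should then meet at a common value of $\alpha$.

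The second step is to identify how $\mathrm{VL}^\infty(R)$ depends on $R$. Here the symplectic scaling identity $\mathrm{VL}(L, \lambda R, \lambda) = \lambda^4 \, \mathrm{VL}(L, R, 1)$, a straightforward extension of Remark \ref{rmk:scaling}, combined with an interpolation between different cylinder radii via folding constructions, should force $\mathrm{VL}^\infty(R) \sim c \, R^\beta$, with $\beta = 4$ being the natural guess on volumetric grounds. A third and more routine step would then check that the error terms in the asymptotic are uniform enough that the inner limit in $L$ exists for each large $R$ and commutes with the outer limit as $R \to \infty$, yielding the desired constant $r^\beta \ell^{-\alpha}$.

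The principal obstacle is the first step: no tool currently available pins down the exact exponent $\alpha$, and the factor-of-two gap between the waist-inequality obstruction ($\alpha \leq 2$) and the folding construction ($\alpha \geq 1$) reflects a genuine limitation of both techniques in the Lipschitz regime. Any successful approach is likely to require a genuinely new ingredient --- perhaps a quantitative version of Gromov's holomorphic-foliation argument in which the Lipschitz constant of $\Phi$ enters the area bound on the foliating curves in a controlled way, or a multi-scale refinement of the waist inequality that exploits Lipschitz control across scales.
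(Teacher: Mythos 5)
This statement is labeled a \emph{conjecture} in the paper, and the paper offers no proof of it; it is posed explicitly as an open problem motivated by the gap between the obstructive bound $c(R)/L^2$ of Theorem \ref{thm:Lip_ob} and the constructive bound $C(R)/L$ of Theorem \ref{thm:Lip_con}. There is therefore no argument in the paper against which to compare your proposal.

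More to the point, your proposal is not a proof but a research program, and you are candid about this yourself in the final paragraph: neither the iterated waist-inequality idea nor the multi-scale folding idea is carried out, and you acknowledge that ``no tool currently available pins down the exact exponent $\alpha$.'' That admission is precisely the reason the statement remains a conjecture. Establishing the power-law form $\mathrm{VL}(L,R) = c_0 R^\beta L^{-\alpha}(1+o(1))$ would indeed imply the conjecture (and is strictly stronger than what is asked), but every ingredient needed for it is missing: the value of $\alpha$, the value of $\beta$, the convergence of $L^\alpha \mathrm{VL}(L,R)$, and the uniformity needed to justify passing from the iterated limit $\lim_{R}\lim_{L}$ to a joint asymptotic. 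Two further small cautions: the conjecture's iterated-limit structure requires the $L\to\infty$ asymptotics to stabilize for each fixed large $R$ before sending $R\to\infty$, so a joint $o(1)$ is not quite what you need; and the scaling identity $\mathrm{VL}(L,\lambda R,\lambda)=\lambda^4\mathrm{VL}(L,R,1)$ (which is the correct extension of Remark \ref{rmk:scaling}) moves $R$ and the cylinder radius together, so by itself it gives no information on the $R$-dependence at fixed cylinder radius $1$; the ``interpolation between cylinder radii via folding'' you invoke to bridge this is exactly the kind of new construction whose absence makes the problem open. In short, the gap is genuine, you have identified it correctly, and no proof is being offered either by you or by the paper.
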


The second question is to what extent our methods work in higher dimensions. On the constructive side, we may simply take our constructed symplectic embedding in 4 dimensions and extend it to $2n$ dimensions by acting by the identity on the symplectically complementary $2n-4$ dimensions. This yields a constructive bound of $\Theta(R^{2n+3}/L)$. As for the obstructive bound, a modification of the techniques presented in this paper, in which we obtain a sweepout instead of a foliation if we follow Gromov's non-squeezing argument, should probably yield a bound of $O(R^{2n}/L^{2n-2})$. 

Finally, we describe a quantity which we believe could be interesting. Although we are working with balls $B^4(\pi R^2)$, we could in principle replace these with other subdomains in $\R^4$. To be precise, suppose we fix $X \subset \R^4$ a bounded domain. Consider the generalized volume loss function
$$\mathrm{VL}_{X}(L,R) := \inf \{\Vol_4\big({E}(\Phi)\big)\}$$
where the infimum is taken over all symplectic embeddings $\Phi \colon RX \rightarrow \R^4$ of Lipschitz embedding at most $L$, and we take
$${E}(\Phi) := \Phi^{-1}(\R^4 \setminus Z^4(\pi)).$$
In the case $X = B^4(\pi)$, we recover the usual volume loss function above.

We offer the following reasonable-looking conjecture.

\begin{conjecture} \label{cc}
For all bounded domains $X$, the limit
$$s_X := \lim_{R \rightarrow \infty} \lim_{L \rightarrow \infty} \frac{\mathrm{VL}_{X}(L,R)}{\mathrm{VL}(L,R)}$$
exists and is strictly positive.
\end{conjecture}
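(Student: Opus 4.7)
My plan is to split the conjecture into the routine boundedness of the ratio and the more delicate existence of the limit. For boundedness, one wants to prove analogues of Theorems \ref{thm:Lip_ob} and \ref{thm:Lip_con} with the ball replaced by $X$. The obstructive side works essentially verbatim: Gromov's foliation argument on $RX$ (which embeds in a large ball $B(\pi R'^2)$ containing $X$) combined with the waist inequality applied to a small sub-ball $B(\pi r'^2)$ contained in $RX$ yields a lower bound $\mathrm{VL}_X(L,R) \geq c_1(X) R^4/L^2$. The constructive side adapts the cube-tiling scheme of Theorem \ref{thm:Lip_con} directly to $X$, giving $\mathrm{VL}_X(L,R) \leq C_1(X) R^4/L$. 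Together these imply that the ratio $\mathrm{VL}_X/\mathrm{VL}$ is trapped between two positive constants depending only on $X$.

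For the inner limit $L \to \infty$ at fixed $R$, my strategy is a two-sided packing/covering comparison. Fix a small parameter $\epsilon>0$ and a large cube $K = K(R_0)$. Cover $RX$ from outside by a disjoint union of translated copies of $K$ with total volume at most $(1+\epsilon)\Vol(RX)$, and pack $RX$ from inside with translated copies of $K$ of total volume at least $(1-\epsilon)\Vol(RX)$. For the upper bound on $\mathrm{VL}_X$, assemble a symplectic embedding of $RX$ out of the near-optimal cube embeddings used in Section \ref{sec:Lipschitz}; crucially, the walls of width $O(1/L)$ between cubes must fit inside the uncovered collar of $RX$. For the lower bound, restrict any symplectic embedding of $RX$ to each cube of the internal packing and apply the obstructive bound to each. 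Performing the same argument for $X=B^4(\pi)$ and taking the ratio should collapse the $R_0$-dependence, yielding
\[
\frac{\Vol(X)}{\Vol(B^4(\pi))}(1-\epsilon) \leq \liminf_{L\to\infty}\frac{\mathrm{VL}_X(L,R)}{\mathrm{VL}(L,R)} \leq \limsup_{L\to\infty}\frac{\mathrm{VL}_X(L,R)}{\mathrm{VL}(L,R)} \leq \frac{\Vol(X)}{\Vol(B^4(\pi))}(1+\epsilon).
\]
Letting $\epsilon \to 0$ gives existence of the inner limit, and its value would be independent of $R$, so the outer limit in $R$ is trivial. This predicts $s_X = \Vol(X)/\Vol(B^4(\pi))$.

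The hard step is the Lipschitz cost of the assembly map. In the proof of Theorem \ref{thm:Lip_con}, the slide moves $H = -\rho(x_1)x_2$ produce a Lipschitz constant controlled by the maximum diameter of the region being translated (this is where the $R^5$ blow-up in Remark \ref{rmk:constant_2} comes from). If one does the packing of $RX$ globally in this style, the assembly of $\Theta(R^4/R_0^4)$ cubes will have Lipschitz constant comparable to $R \cdot L$ rather than $L$, which breaks the comparison with $\mathrm{VL}(L,R)$. The main obstacle is to redesign the construction more locally—so that cubes in different regions of $RX$ are squeezed by essentially independent, compactly supported Hamiltonian isotopies whose Lipschitz norms depend only on $R_0$ and not on $R$. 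A plausible approach is to abandon the global rectangular factorization of Theorem \ref{thm:Lip_con} in favor of a decomposition where each block is individually folded into its own parallel copy of $Z(\pi)$ by isotopies supported in a neighborhood of that block; then the locality becomes manifest. If this localization succeeds, both the existence of the limit and the identification $s_X = \Vol(X)/\Vol(B^4(\pi))$ would follow; if it fails, $s_X$ might still exist but reflect a more subtle geometric invariant of $X$, in which case the obstructive side must also be sharpened in a coordinated way.
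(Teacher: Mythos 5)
This statement is posed in the paper purely as a conjecture; the paper offers no proof, so there is no ``paper's own proof'' to compare against. Your proposal, honestly presented as a sketch with acknowledged gaps, should therefore be judged on its own merits, and I see two genuine problems beyond what you flag yourself.

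First, the inequality chain at the heart of your packing argument has a type mismatch. The obstructive bound you would prove for $\mathrm{VL}_X$ (and that the paper proves for $\mathrm{VL}$) is of the order $R^4/L^2$, while the constructive bound is of the order $R^4/L$. These do not agree in their $L$-exponent. If you bound $\mathrm{VL}_X$ from below by the obstructive $\Theta(1/L^2)$ and $\mathrm{VL}$ from above by the constructive $\Theta(1/L)$, the resulting lower bound on the ratio $\mathrm{VL}_X/\mathrm{VL}$ is $\Theta(1/L)$, which tends to $0$ rather than to $\Vol(X)/\Vol(B^4(\pi))$. Symmetrically, the upper bound you get on the ratio tends to $\infty$. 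So the displayed sandwiching inequality is not justified by your sketch. The way to avoid this is to never pass through the absolute obstructive/constructive bounds but instead compare $\mathrm{VL}_X(L,R)$ and $\mathrm{VL}(L,R)$ directly to a common reference quantity $\mathrm{VL}_{K(R_0)}(L)$ via super-additivity (restriction of an embedding to disjointly packed cubes) and sub-additivity (assembling cube embeddings into an embedding of the larger domain). That makes the $L$-dependence cancel between numerator and denominator, and $s_X = \Vol(X)/\Vol(B^4(\pi))$ would then drop out of the ratio of packing counts. You seem to have this picture in mind, but as written the argument invokes the obstructive/constructive bounds on both sides and falls into the power mismatch.

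Second, even in the super/sub-additivity formulation, the crux is exactly the step you flag: the sub-additivity $\mathrm{VL}_Y(L,\cdot) \lesssim \sum_i \mathrm{VL}_{K_i}(L)$ requires gluing near-optimal cube embeddings into a global one without inflating the Lipschitz constant. Your proposed remedy (replace global slide moves by compactly supported, per-cube folding) underestimates an essential obstruction: all cubes must ultimately land in the \emph{same} cylinder $Z(\pi)$, so cubes far from the cylinder necessarily undergo long-range translation, and translating a block a distance $D$ by a Hamiltonian isotopy supported in a bounded neighbourhood is exactly what drives the Lipschitz constant up with $D$ (this is the $\rho''(x_1)x_2$ term in the paper's proof of Theorem \ref{thm:Lip_con}, and the source of the $R^5$ in Remark \ref{rmk:constant_2}). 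There is no evident way to make the isotopies ``independent and compactly supported'' while still concentrating the images into one cylinder. Without a construction resolving this, the sub-additivity step, and hence both the existence of the inner limit and the identification of $s_X$, remains open. Your own final sentences essentially concede this, so the proposal is best read as a plausible reduction of the conjecture to a sharp assembly lemma, not a proof.
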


Notice that if there exists a symplectic embedding $rX \hookrightarrow Y$ of Lipschitz constant $\ell$, then
$$\mathrm{VL}_{X}(\ell L, rR) \leq \mathrm{VL}_Y(L,R).$$
Should Conjecture \ref{cc} hold, one may hope to use this inequality to compare the values of $s_X$ and $s_Y$ for bounded domains $X$ and $Y$.

\appendix
\maketitle

\section{Squeezing and degenerations of the complex projective plane -- by Jo\'{e} Brendel} \label{appx}

\subsection{Introduction and main theorem}
Our goal is to show the following.

\begin{theorem}
	\label{thm:main}
	For every~$\alpha < 3$, there is a set~$\Sigma \subset B^4(\alpha)$ of Minkowski dimension~$2$ such that~$B^4(\alpha)\setminus \Sigma$ symplectically embedds into~$Z^4(1)= \R^2 \times D^2(1) \subset \R^4$.
\end{theorem}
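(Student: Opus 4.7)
The strategy is to use almost toric fibrations of $\mathbb{CP}^2$ indexed by Markov triples, i.e. positive integer solutions $(a,b,c)$ to $a^2+b^2+c^2 = 3abc$. Following \cite{HacPro10}, each such triple produces a toric degeneration of $\mathbb{CP}^2$ to the weighted projective space $\mathbb{CP}^2(a^2,b^2,c^2)$, and symplectically this corresponds to an almost toric fibration $\pi_{(a,b,c)}\colon \mathbb{CP}^2(\alpha) \to \Delta_{(a,b,c)}$ whose base is an integral-affine triangle carrying three nodal fibers, one near each vertex. The case $(1,1,1)$ recovers the standard toric moment map on $\mathbb{CP}^2$, which together with the fact that $\mathbb{CP}^2(\alpha) \setminus (\text{line}) \cong B^4(\alpha)$ already gives the embedding for $\alpha = 2$ (matching Theorem~\ref{thmsymplectomorphism}); the point is that other Markov triples yield more elongated triangles.

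First I would set up the chosen fibration in a normalized form: place one vertex of $\Delta_{(a,b,c)}$ at the origin with edges along the positive axes, and position the three nodes so that their eigenline cuts emanate toward the opposite edges. After cutting along these eigenlines, the resulting convex polytope $P_{(a,b,c)} \subset \Delta_{(a,b,c)}$ is the moment image of a genuine Hamiltonian $T^2$-action on the complement in $\mathbb{CP}^2(\alpha)$ of (i) the three toric symplectic divisors, (ii) the three singular (pinched-torus) nodal fibers, and (iii) the visible Lagrangian disks traced out by the cuts. Over the interior of $P_{(a,b,c)}$ the fibration is genuinely toric, so any axis-aligned rectangle $R = [0,\alpha]\times [0,1] \subset P_{(a,b,c)}$ with its corner at the origin lifts to a standard symplectic polydisk $D^2(\alpha) \times D^2(1)$, which embeds in $\R^2 \times D^2(1) = Z^4(1)$.

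The heart of the argument, and the main obstacle, is the combinatorial claim that for every $\alpha < 3$ there is a Markov triple and a choice of mutation/cut configuration for which $P_{(a,b,c)}$ contains such a rectangle $R$. The plan is to use the branch of Markov triples $(1, F_{2n-1}, F_{2n+1})$ (odd-indexed Fibonacci numbers), for which the triangles $\Delta_{(a,b,c)}$ become arbitrarily elongated in affine-integral terms. One needs to verify that the three eigenline cuts can be routed so as to avoid the candidate rectangle of width $1$, and that the remaining free length approaches $3$ as $n \to \infty$. The sharp constant $3$ reflects the Markov identity: the total affine area removed by the three nodal corners at triple $(a,b,c)$ is controlled by $a^2+b^2+c^2 = 3abc$, which in the appropriate normalization leaves asymptotic room for a strip of length $3$ and width $1$.

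Finally, assembling the pieces, I would choose a toric divisor of $\Delta_{(a,b,c)}$ disjoint from the closure of $\pi^{-1}(R)$ to play the role of the ``line at infinity" $L$; then $\mathbb{CP}^2(\alpha)\setminus L \cong B^4(\alpha)$, and the composition
\[
B^4(\alpha) \setminus \Sigma \;\cong\; \pi^{-1}(R) \;\cong\; D^2(\alpha)\times D^2(1) \;\hookrightarrow\; Z^4(1)
\]
supplies the desired symplectic embedding. The removed set $\Sigma$ is a finite union of symplectic divisors, visible Lagrangian disks, and pinched-torus nodal fibers, each of real dimension $2$; hence $\Sigma$ has Minkowski dimension at most $2$, and by the obstructive bound in Theorem~\ref{thm:minkowski_obstruct} the dimension is exactly $2$.
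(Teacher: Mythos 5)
Your overall strategy — Markov triangles and almost toric fibrations of $\mathbb{CP}^2$, as in Hacking--Prokhorov and Vianna — is the same as the appendix's. However, there is a genuine gap in the final assembly that a volume count exposes immediately.

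You claim
\[
B^4(\alpha) \setminus \Sigma \;\cong\; \pi^{-1}(R) \;\cong\; D^2(\alpha)\times D^2(1),
\]
where $R = [0,\alpha]\times[0,1]$ is a rectangle in the cut base polytope and $\Sigma$ is a finite union of two-dimensional objects. These two statements are incompatible. Since the moment map pushes Liouville volume forward to Lebesgue measure on the polytope, $\Vol\bigl(\pi^{-1}(R)\bigr) = \area(R) = \alpha$. On the other hand, $\Vol\bigl(B^4(\alpha)\bigr) = \alpha^2/2$, and removing a $2$-dimensional (hence measure-zero) set $\Sigma$ cannot decrease this. For $\alpha > 2$ we have $\alpha^2/2 > \alpha$, so $B^4(\alpha)\setminus\Sigma$ cannot be symplectomorphic to the polydisk. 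The rectangle step therefore cannot produce the theorem in the range $2 < \alpha < 3$, which is exactly the new territory beyond Theorem~\ref{thmsymplectomorphism}. There is also a geometric obstruction to the rectangle itself: in order for $\Delta_{a,b,c}(\alpha)$ to fit in the half-strip $\R_{\geq 0}\times[0,1)$, its affine height over the long base must be strictly less than $1$ (and $\to \alpha/3$ as the branch deepens), so an axis-aligned rectangle of height exactly $1$ cannot fit inside it at all. The role of the strip is not to extract a polydisk but rather to observe that the \emph{entire interior} of the toric polytope sits inside $\operatorname{int}(\cs)$, hence the preimage of the interior embeds into $Z^4(1)$ by the standard principle that inclusions of moment polytope images respecting boundary stratifications yield symplectic embeddings.

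The appendix avoids this issue by routing through the weighted projective space: it takes the Markov triangle to be the genuine (orbifold) toric moment polytope of $\CP(a^2,b^2,c^2)$, so that $\CP(a^2,b^2,c^2)\setminus\mathcal{D}$ (the complement of the full toric boundary $\mathcal{D}$) embeds into $Z^4(1)$, and then invokes the nontrivial Proposition~\ref{lem:main}: there is a symplectomorphism from $\CP^2$ minus three Lagrangian pinwheels onto $\CP(a^2,b^2,c^2)$ minus its orbifold points, whose restriction identifies $\CP^2\setminus\Sigma'$ with $\CP(a^2,b^2,c^2)\setminus\mathcal{D}$, where $\Sigma'$ is the union of three pinwheels and a symplectic torus. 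That proposition is substantial: its proof glues local $\mathbb{Q}$-Gorenstein smoothings of the cyclic quotient singularities, corrects boundary identifications using Bonahon's classification of lens space diffeomorphisms, and then appeals to Gromov--Taubes uniqueness of symplectic forms on $\CP^2$ to straighten the glued form. Your sketch of ``cut along eigenlines and the complement is honestly toric'' elides precisely this content; the identification of the complement as something of full volume equal to $B^4(\alpha)$ minus a $2$-dimensional set is the whole point, and the paper establishes it through $\CP(a^2,b^2,c^2)$. Separately, your combinatorial claim (that a suitable Markov branch yields triangles fitting in the strip for all $\alpha < 3$) is indeed true — it is Lemma~\ref{lem:markov}, proved via the Fibonacci-like branch $m_{k+2}=3m_{k+1}m_k - m_{k-1}$ and the affine side-length formula $\lambda a^2, \lambda b^2, \lambda c^2$ — but you leave it unverified.
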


Our notation corresponds to that of the main body of the text by setting~$\alpha = \pi R^2$. The idea of the proof is to view~$B^4(\alpha)$ as~$\CP^2(\alpha) \setminus \CP^1$ and to use almost toric fibrations of~$\CP^2$. As observed in~\cite{Via16} for every \emph{Markov triple}~$(a,b,c)$, there is a triangle~$\Delta_{a,b,c}(\alpha) \subset \R^2$ and an almost toric fibration on~$\CP^2(\alpha)$ with a base diagram whose underlying polytope is~$\Delta_{a,b,c}(\alpha)$. Now note that the toric moment map image of~$Z^4(1)$ is the half-strip~$\cs = \R_{\geqslant 0} \times [0,1)$. We shall show that if the triangle~$\Delta_{a,b,c}(\alpha)$ fits into~$\cs$ (after applying an integral affine transformation), then there is a symplectic embedding of~$\CP^2(\alpha)$ into~$Z^4(1)$ at the cost of removing a certain subset~$\Sigma'$ from~$\CP^2(\alpha)$. The point here is that one can get a good understanding of the subset one needs to remove. Indeed, we show that~$\Sigma'$ is a union of three Lagrangian pinwheels (defined as in \cite{EvaSmi18}) and a symplectic torus. In particular, this set has Minkowski dimension~$2$. A combinatorial argument shows that for every~$\alpha < 3$, there is a Markov triple~$(a,b,c)$ and an inclusion~$\Delta_{a,b,c}(\alpha) \subset \cs$, see Lemma~\ref{lem:markov}.  

\begin{remark}
	{\rm
		As was pointed out to us by Leonid Polterovich, our results can be combined with Gromov's non-squeezing to show that any symplectic ball~$B^4(1+\varepsilon) \subset \CP^2(\alpha)$ intersects the set~$\Sigma' \subset \CP^2$ discussed above. See Corollary~\ref{cor:rigidity} for more details.
	}
\end{remark}

\begin{remark}
	{\rm
		The same strategy may work to produce symplectic embeddings~$D^2(\alpha)\times D^2(\alpha) \setminus \Sigma \hookrightarrow Z^4(1)$ of the polydisk of capacity~$\alpha < 2$ minus a union of some two-dimensional manifolds into the cylinder. Indeed, one can view the polydisk as the affine part of~$S^2 \times S^2$ and use almost toric fibrations of the latter space to carry out the same argument. 
	}
\end{remark}

The relationship between Markov triples and the complex and symplectic geometry of~$\CP^2$ has generated a lot of interest in recent years. It first appeared in the work of Galkin--Usnich~\cite{GalUsn10}, where the authors conjectured that for every Markov triple there is an exotic Lagrangian torus in~$\CP^2$. This conjecture was proved and generalized by Vianna~\cite{Via16},~\cite{Via17} by the use of almost toric fibrations, see also Symington~\cite{Sym03}. On the algebro-geometric side, Hacking--Prokhorov~\cite{HacPro10} showed that a complex surface~$X$ with quotient singularities admits a~$\Q$-Gorenstein smoothing to~$\CP^2$ if and only if~$X$ is a weighted projective space~$\CP(a^2,b^2,c^2)$ and~$(a,b,c)$ forms a Markov triple. In~\cite{EvaSmi18}, Evans--Smith studied embeddings of \emph{Lagrangian pinwheels} into~$\CP^2$. This is directly related to~\cite{HacPro10}, since Lagrangian pinwheels appear naturally as vanishing cycles of the smoothings of~$\CP(a^2,b^2,c^2)$ to~$\CP^2$. See also the recent work by Casals--Vianna~\cite{CasVia20} and the forthcoming paper joint with Mikhalkin and Schlenk~\cite{BreMikSch21} for other applications of almost toric fibrations to symplectic embedding problems.

\emph{Acknowledgements.} We heartily thank Jonny Evans, Grisha Mikhalkin, Leonid Polterovich, Kevin Sackel and Umut Varolgunes for valuable discussions and careful comments on a previous draft. We are grateful to the attentive referee for pointing out a gap in the proof. We thank Felix Schlenk for constant encouragement and generous help. 

\subsection{Some geometry of Markov triangles}
\label{ssec:2}
Let us recall some facts about Markov numbers and their associated triangles.

\begin{definition}
	A triple of natural numbers~$a,b,c \in \N_{>0}$ is called a \emph{Markov triple} if it solves the \emph{Markov equation}
	\begin{equation}
		\label{eq:Markov}
		a^2 + b^2 + c^2 = 3abc.
	\end{equation}
\end{definition}

If~$(a,b,c)$ is a Markov triple, then so is~$(a,b,3ab - c)$. Starting from the solution~$(1,1,1)$, we obtain the so-called \emph{Markov tree} by mutations~$(a,b,c) \rightarrow (a,b,3ab - c)$. The first few Markov triples are
\begin{equation}
	(1,1,1)\quad
	(1,1,2)\quad
	(1,2,5)\quad
	(1,5,13)\quad
	(2,5,29)\quad 
	(1,13,34) \quad \cdots
\end{equation}

Given~$\alpha > 0$, let~$\CP^2$ be equipped with the Fubini--Study symplectic form~$\omega$ normalized such that~$\int_{\CP^1}\omega = \alpha$. For every Markov triple~$(a,b,c)$, there is an almost toric fibration of~$\CP^2$ with almost toric base diagram a rational triangle $\Delta_{a,b,c}(\alpha) \subset \R^2$, which we call the \emph{Markov triangle} associated to the Markov triple~$(a,b,c)$. The first Markov triangle is the (honest) toric moment map image of~$\CP^2$ in our normalization,
\begin{equation}
	\Delta_{1,1,1}(\alpha) = \{ (x,y) \in \R_{\geqslant 0}^2 \, \vert \, x+y \leqslant \alpha \}.
\end{equation}
In fact, we will slightly abuse notation and sometimes think of~$\Delta_{a,b,c}(\alpha)$ as a triangle in~$\R^2$ and sometimes as the equivalence class of triangles~$\R^2$ under the group of toric symmetries given by \emph{integral affine transformations}, i.e.\ elements in~$\operatorname{Aff}(2;\Z) = \R^2 \rtimes \operatorname{GL}(2;\Z)$. For every mutation of Markov triples~$(a,b,c) \rightarrow (a,b,3ab - c)$, there is a corresponding mutation of triangles~$\Delta_{a,b,c}(\alpha) \rightarrow \Delta_{a,b,3ab - c}(\alpha)$, defined by cutting the triangle in two halves and applying a shear map to one of the halves and glueing it back to the other half. This is called a \emph{branch move} and we refer to Symington~\cite[Sections 5.3 and 6]{Sym03} and Vianna~\cite[Section 2]{Via16} for details. For a concrete description of the triangles~$\Delta_{a,b,c}(\alpha)$, see~\eqref{eq:phidelta} and the surrounding discussion. The area of the Markov triangles is well-defined, since it is invariant under~$\operatorname{Aff}(2;\Z)$. Furthermore, the area is invariant under the mutation of triangles and hence we obtain
\begin{equation}
	\label{eq:area}
	\operatorname{area}(\Delta_{a,b,c}(\alpha))
	= \operatorname{area}(\Delta_{1,1,1}(\alpha))
	= \frac{\alpha^2}{2}.
\end{equation}
Recall that we are interested in \emph{embedding Markov triangles into the half-strip}~$\cs = \R_{\geqslant 0} \times [0,1)$. This means that, given~$\alpha > 0$, we look for a Markov triple~$(a,b,c)$ such that~$\Delta_{a,b,c}(\alpha)\subset \cs$ up to applying an element in~$\operatorname{Aff}(2;\Z)$. We will prove the following.

\begin{lemma}
	\label{lem:markov}
	For every~$0 < \alpha < 3$, there is a Markov triple~$(a,b,c)$ such that the Markov triangle~$\Delta_{a,b,c}(\alpha) \subset \R^2$ is in~$\cs = \R_{\geqslant 0} \times [0,1)$ up to applying an element in~$\operatorname{Aff}(2;\Z)$. This result is sharp in the sense that for~$ \alpha \geqslant 3$, there is no such Markov triple.
\end{lemma}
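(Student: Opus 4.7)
The strategy is to show that the minimum integer affine width of $\Delta_{a,b,c}(\alpha)$ equals $ab\alpha/c$ (for $a\leqslant b\leqslant c$), and then extract the threshold $\alpha<3$ from the Markov equation.

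First I would identify the three integer affine heights of $\Delta_{a,b,c}(\alpha)$ to the sides opposite the three vertices. Using induction on the branch moves of the Markov tree starting from $\Delta_{1,1,1}(\alpha)$ (where all three heights equal $\alpha$), or via a direct realization (e.g., for $(1,b,c)$-triples one can take the right triangle with legs $c\alpha/b$ and $b\alpha/c$), these heights are
$$h_a=\frac{bc\alpha}{a},\quad h_b=\frac{ac\alpha}{b},\quad h_c=\frac{ab\alpha}{c},$$
with integer affine side lengths $\ell_m=\alpha^2/h_m$ by the area formula~\eqref{eq:area}.

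Next, observe that $\Delta_{a,b,c}(\alpha)$ embeds into $\cs$ up to $\operatorname{Aff}(2;\Z)$ if and only if some primitive integer $v\in\Z^2$ satisfies $W_v(\Delta):=\max_\Delta\langle v,\cdot\rangle-\min_\Delta\langle v,\cdot\rangle<1$. Writing $W_v = \max_{\mathrm{side}}\ell_{\mathrm{side}}|v\cdot u_{\mathrm{side}}|$ (where $u_{\mathrm{side}}$ is the primitive integer direction of the side), one checks that $\min_v W_v = \min_m h_m = ab\alpha/c$: perpendicularity to the side opposite the $c$-vertex realises $h_c$, while for $v$ not perpendicular to any side, $|v\cdot u_{\mathrm{side}}|\geqslant 1$ for each side so $W_v\geqslant\max_{\mathrm{side}}\ell_{\mathrm{side}}=\alpha c/(ab)\geqslant ab\alpha/c$ (using $c\geqslant ab$, itself a consequence of $c\geqslant 3ab/2$ from the Markov equation).

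Finally, dividing the Markov equation~\eqref{eq:Markov} by $abc$ gives $\frac{a}{bc}+\frac{b}{ac}+\frac{c}{ab}=3$, so $\frac{c}{ab}<3$ and hence $h_c>\alpha/3$ for every Markov triple. For $\alpha\geqslant 3$ this forces $h_c>1$ and no Markov triangle fits, proving sharpness. For $\alpha<3$, I would use the ``balanced'' branch $(1,1,1)\to(1,1,2)\to(1,2,5)\to(2,5,29)\to(5,29,433)\to\cdots$ obtained by repeatedly mutating the smallest Markov number; along this branch $a/(bc)$ and $b/(ac)$ tend to $0$, so $c/(ab)\to 3$, and hence some triple on the branch satisfies $c/(ab)>\alpha$, equivalently $h_c<1$, so that the associated Markov triangle fits in $\cs$. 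The main obstacle is establishing the heights formula for a general Markov triple; after that the argument is a short integer-geometric computation combined with the elementary Markov identity. A careful inductive verification along the mutation tree — tracking how both the area and the three heights transform under a branch move — seems to be the cleanest route.
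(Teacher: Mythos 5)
Your proposal is correct and follows the paper's approach closely: both rely on Vianna's identification of the affine side lengths of $\Delta_{a,b,c}(\alpha)$ as $\lambda a^2, \lambda b^2, \lambda c^2$, the perimeter and area identities~\eqref{eq:area}, \eqref{eq:affarea}, and the same branch of the Markov tree obtained by repeatedly mutating the smallest entry. Your refinement is to isolate the exact lattice width $\min_v W_v(\Delta_{a,b,c}(\alpha)) = ab\alpha/c$ (for $a\leqslant b\leqslant c$), which packages the paper's two-case sharpness argument (longest edge parallel or not parallel to $e_1$) into a single formula and makes transparent which Markov triangles fit in $\cs$; the paper instead argues existence via the limit $d_{\rm aff}(v_n,E_n)\to\alpha/3$ and sharpness via the direct case analysis, which amounts to the same underlying computation. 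One small slip: the intermediate claim $c\geqslant 3ab/2$ fails for $(1,1,1)$, where $c=1$ is the \emph{smaller} root of the quadratic $c^2-3abc+a^2+b^2=0$. The inequality you actually use, $c\geqslant ab$, does hold for every sorted Markov triple: if $c<ab$, then $a^2+b^2+c^2=3abc>3c^2$ gives $a^2+b^2>2c^2$, contradicting $a\leqslant b\leqslant c$. With that replacement, the argument is sound; the height-formula step that you defer to an induction on branch moves is exactly the content the paper imports from Vianna.
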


Let us introduce some definitions from integral affine geometry. A vector~$v \in \Z^2$ is called \emph{primitive} if~$\beta v \notin \Z^2$ for all~$0 < \beta < 1$. Note that to every vector~$w \in \R^2$ with rational slope, we can associate a unique primitive vector~$v$ such that~$w = \gamma v$ for~$\gamma > 0$. We call~$\gamma$ the \emph{affine length} of~$w$ and denote it by~$\ell_{\rm aff}(w)$. Let~$l \subset \R^2$ be a rational affine line (or line segment) with primitive directional vector~$v \in \Z^2$ and~$p \in \R^2$ be a point. Then the \emph{affine distance} is defined as~$d_{\rm aff}(p,l) = \vert\operatorname{det}(v,u) \vert$, where~$u$ is any vector such that~$p + u \in l$. This does not depend on any of the choices we have made and these quantities are~$\operatorname{Aff}(2;\Z)$-invariant. See McDuff~\cite{McD11} for more details. 

For a given rational (not necessarily Markov) triangle~$\Delta$, we denote by~$E_1,E_2,E_3$ its edges and by~$v_1,v_2,v_3$ its vertices such that~$v_i$ lies opposite to the edge~$E_i$.
We call~$\ell_{\rm aff}(E_1) + \ell_{\rm aff}(E_2) + \ell_{\rm aff}(E_3)$ the \emph{affine perimeter} of~$\Delta$. Note that the affine perimeter of a Markov triangle~$\Delta_{a,b,c}(\alpha)$ is equal to~$3\alpha$.
We have the following formula for the area of~$\Delta$,
\begin{equation}
	\label{eq:affarea}
	\area(\Delta) = \frac{1}{2} \ell_{\aff}(E_i) \, d_{\aff}(v_i,E_i).
\end{equation}
This follows from the definition of affine distance and affine length. 
\medskip

\ni
\emph{Proof of Lemma~\ref{lem:markov}.}
For the first part of the proof we only need one branch of the Markov tree, namely the one where the maximal entry grows the fastest. More precisely, let~$\Delta_n$ be the Markov triangle associated to the triple~$(m_{n+2},m_{n+1},m_n)$, where~$m_k$ is recursively defined as
\begin{equation}
	\label{eq:mk}
	m_0=m_1=m_2=1, \quad m_{k+2}= 3m_{k+1}m_{k} - m_{k-1}.
\end{equation}
The first few terms of this sequence are given by~$\{m_k\}_{k\in \N}= \{1,1,1,2,5,29,433,\ldots$\}. We clearly have~$m_k \rightarrow \infty$. Recall from~\cite{Via16} that the affine side lengths of a Markov triangle~$\Delta_{a,b,c}(\alpha)$ are given by~$\lambda a^2,\lambda b^2, \lambda c^2$ for a proportionality constant~$\lambda > 0$. Since the affine perimeter of the Markov triangle is~$3\alpha$, we obtain~$\lambda(a^2+b^2+c^2) = 3 \alpha$. Together with the Markov equation~\eqref{eq:Markov} this yields that the longest edge~$E_n$ in~$\Delta_n(\alpha)$ has affine length
\begin{equation}
	\ell_{\rm aff}(E_n) 
	= \frac{\alpha m_{n+2}}{m_{n+1}m_n} 
	\stackrel{\eqref{eq:mk}}{=} \alpha \left( 3 - \frac{m_{n-1}}{m_{n+1}m_n} \right).
\end{equation}
Since the second summand goes to~$0$ for large~$n$, we obtain~$\ell_{\rm aff}(E_n) \rightarrow 3\alpha$. Let~$v_n$ be the vertex opposite to~$E_n$. By~\eqref{eq:area} and the area formula~\eqref{eq:affarea} we obtain
\begin{equation}
	\ell_{\rm aff}(E_n)\,d_{\rm aff}(v_n,E_n)
	= \alpha^2.
\end{equation}
This implies that~$d_{\rm aff}(v_n,E_n) \rightarrow \alpha/3$. Now note that we can assume, up to~$\operatorname{Aff}(2;\Z)$, that the maximal edge~$E_n$ lies in~$\R_{\geqslant 0} \times \{0\}$ and that~$\Delta_n(\alpha)$ lies in the upper half-plane. Then the (Euclidean) height of~$\Delta_n(\alpha)$ is equal to~$d_{\rm aff}(v_n,E_n)$. This implies that for all~$\alpha < 3$, we have~$\Delta_n(\alpha) \subset \cs$ for large enough~$n \in \N$. 

Now let~$\alpha \geqslant 3$ and suppose that there is a Markov triangle~$\Delta_{a,b,c}(\alpha) \subset \cs$. Let~$E$ be the longest edge of~$\Delta_{a,b,c}(\alpha)$ and~$v$ the opposite vertex. We first prove that~$E$ is parallel to~$e_1 = (1,0)$. Indeed, suppose it is not. Then we can write~$E = \ell_{\rm aff}(E) v$ for a primitive vector~$v = (v_1,v_2)\in \Z^2$ with~$v_2 \geqslant 1$. But since the affine perimeter of~$\Delta_{a,b,c}(\alpha)$ is~$3\alpha$ and~$E$ is the longest edge, we obtain~$\ell_{\rm aff}(E) \geqslant \alpha$ and thus~$\Delta_{a,b,c}(\alpha)$ is not contained in~$\cs$. Now if~$E$ is parallel to~$e_1$, then the (Euclidean) height of~$\Delta_{a,b,c}(\alpha)$ is~$d_{\rm aff}(v,E)$. The affine perimeter~$3\alpha$ is strictly larger than~$\ell_{\rm aff}(E)$ from which we deduce~$d_{\rm aff}(v,E) >  \alpha/3 \geqslant 1$ by the area formula~\eqref{eq:affarea}.
\proofend

\subsection{Proof of the main theorem}

\begin{definition}
	Let~$p$ be a positive integer. The topological space obtained from the unit disk~$D$ by quotienting out the action of the group of the~$p$-th roots of unity on~$\pp D$ is called $p$-\emph{pinwheel}.
\end{definition}

For example the~$2$-pinwheel is~$\RP^2$. The image of~$\pp D$ in the quotient is called the \emph{core circle}. For all~$p > 2$ the~$p$-pinwheels are not smooth at points of the core circle. A \emph{Lagrangian pinwheel} in a symplectic manifold~$M$ is a Lagrangian \emph{embedding} of a~$p$-pinwheel into~$M$, see~\cite[Definition 2.3]{EvaSmi18} for the meaning of \emph{embedding} in this context. As it turns out, for every Lagrangian $p$-pinwheel, there is an additional extrinsic parameter~$q \in \{1,\ldots,p-1\}$ measuring the twisting of the pinwheel around its core circle. We call such an object~$(p,q)$-pinwheel and denote it by~$L_{p,q}$ when this causes no confusion. For us, the following result is key.

\begin{proposition}
	\label{lem:main}
	Let~$(a,b,c)$ be a Markov triple and let~$s_a,s_b,s_c \in \CP(a^2,b^2,c^2)$ be the orbifold singular points of the corresponding weighted projective space. Then there is a surjective map~$\tilde{\phi} \colon \CP^2 \rightarrow \CP(a^2,b^2,c^2)$ and there are (mutually disjoint) Lagrangian pinwheels~$L_{a,q_a}, L_{b,q_b}, L_{c,q_c} \subset \CP^2$ with
	\begin{equation}
		\tilde{\phi}(L_{a,q_a}) = s_a, \quad
		\tilde{\phi}(L_{b,q_b}) = s_b, \quad
		\tilde{\phi}(L_{c,q_c}) = s_c,
	\end{equation}
	such that~$\tilde{\phi}$ restricts to a symplectomorphism
	\begin{equation}
		\label{eq:mainsymplecto}
		\phi \colon  \CP^2\setminus (L_{a,q_a} \sqcup L_{b,q_b} \sqcup L_{c,q_c}) \rightarrow \CP(a^2,b^2,c^2) \setminus \{s_a,s_b,s_c\}.
	\end{equation}
	Furthermore, the preimage of the set~$\cd \subset \CP(a^2,b^2,c^2)$, see~\eqref{eq:ellbdry}, consists of the union of three Lagrangian pinwheels and a symplectic two-torus intersecting the pinwheels in their respective core circles.
\end{proposition}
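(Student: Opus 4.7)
The strategy is to realize $\tilde\phi$ as the symplectic analogue of the Hacking--Prokhorov $\mathbb{Q}$-Gorenstein smoothing~\cite{HacPro10}, using Vianna's almost toric fibration of $\CP^2$~\cite{Via16} to match it with the standard toric moment map of $\CP(a^2,b^2,c^2)$. Both $\CP^2$ and $\CP(a^2,b^2,c^2)$ admit Lagrangian torus fibrations over the same base $\Delta_{a,b,c}(\alpha)$ (up to $\operatorname{Aff}(2;\Z)$): the first with three interior nodal fibres whose cuts run to the three vertices, the second with orbifold singularities sitting exactly at those vertices. So the plan is to build $\tilde\phi$ fibrewise, collapsing the vanishing Lagrangian pinwheel above each cut to the corresponding orbifold point.

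First I would set up the almost toric picture. Vianna's construction produces a Lagrangian torus fibration $\pi : \CP^2(\alpha) \to \Delta_{a,b,c}(\alpha)$ which is regular outside three nodal fibres, with branch cuts $\sigma_a,\sigma_b,\sigma_c$ joining each node to its vertex. The vanishing cycle of the $i$-th node propagates along $\sigma_i$ and closes up over the vertex to form a \emph{visible Lagrangian} $L_i \subset \CP^2$ whose base image is $\sigma_i$. A standard local computation in almost toric geometry (cf.~\cite{Sym03},~\cite{EvaSmi18}) identifies $L_i$ with a Lagrangian pinwheel $L_{p_i, q_i}$, where $p_i \in \{a,b,c\}$ is the vertex multiplicity and $q_i \in \{1,\ldots,p_i-1\}$ is read off from the integral affine direction of $\sigma_i$. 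The three pinwheels are mutually disjoint because the three cuts are.

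Next I would construct the map by action--angle matching. The weighted projective space $\CP(a^2,b^2,c^2)$ carries a standard toric moment map $\mu$ whose polytope is also $\Delta_{a,b,c}(\alpha)$ and for which $\mu^{-1}(v_i)=\{s_i\}$. Over the common regular locus $U := \inte(\Delta_{a,b,c}(\alpha)) \setminus (\sigma_a \cup \sigma_b \cup \sigma_c)$ (cut a little further so that it is simply connected), both $\pi$ and $\mu$ are regular Lagrangian torus fibrations carrying the same integral affine structure, so Duistermaat's action--angle theorem produces a fibrewise symplectomorphism $\phi_0$ between them. To glue across each cut $\sigma_i$, one checks that the nodal monodromy on the $\pi$-side agrees with the orbifold monodromy generating $\pi_1$ of the smooth locus of $\CP(a^2,b^2,c^2)$ near $s_i$; both are the same shear in $\mathrm{SL}(2,\Z)$ coming from the Wahl singularity $\tfrac{1}{p_i^2}(1,p_i q_i - 1)$. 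The extension across the three boundary edges is standard toric geometry, and collapsing each $L_{p_i,q_i}$ continuously to $s_i$ produces the surjection~\eqref{eq:mainsymplecto}.

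For the last statement, once $\cd$ is unwrapped via~\eqref{eq:ellbdry} as a closed loop through the three orbifold points in $\CP(a^2,b^2,c^2)$ whose smooth part is covered by a toric $S^1$-bundle over an arc in $\Delta_{a,b,c}(\alpha)$, its $\tilde\phi$-preimage is exactly the three pinwheels (preimage of the $s_i$) together with a symplectic two-torus swept by the vertical circles above that arc; the torus meets each $L_{p_i,q_i}$ along the core circle, which is precisely the nodal vanishing cycle living over the endpoint of $\sigma_i$. The main obstacle I expect is the monodromy step: correctly identifying the twisting parameter $q_i$ and verifying that the nodal and orbifold monodromies match in action--angle coordinates so that $\phi_0$ really extends across each cut. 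This is a delicate bookkeeping that weaves together the Markov equation, the Wahl singularities, and the integral affine geometry of the Markov triangle, but once settled everything else follows from soft action--angle and toric arguments.
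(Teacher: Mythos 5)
Your overall setup — Vianna's almost toric fibration $\pi:\CP^2\to\Delta_{a,b,c}$, the toric orbifold moment map $\mu:\CP(a^2,b^2,c^2)\to\Delta_{a,b,c}$, the visible Lagrangian pinwheels over the branch cuts $\sigma_i$, and action–angle matching over the common regular locus — is the same starting point as the paper. Your identification of $\tilde\phi^{-1}(\cd)$ as three pinwheels plus a symplectic torus meeting them along core circles is also correct. But there is a genuine gap at the gluing step, which is in fact the entire technical content of the paper's proof.

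The gap is in the sentence beginning ``To glue across each cut $\sigma_i$, one checks that the nodal monodromy on the $\pi$-side agrees with the orbifold monodromy...''. First, these two things are not the same kind of object: the nodal monodromy is an infinite-order shear in $\mathrm{SL}(2,\Z)$ acting on the period lattice, whereas $\pi_1$ of the smooth locus near an orbifold point of type $\frac{1}{p^2}(1,pq-1)$ is the finite group $\Z_{p^2}$, and the toric orbifold moment map has \emph{no} integral affine monodromy in the interior of the polytope. What is true is weaker: with the cut drawn from the node to the vertex, the cut-open integral affine structures coincide, which is why $\phi_0$ exists over $U$. Second — and this is the real obstruction — $\phi_0$ simply does not extend across $\sigma_i$ by an action–angle argument, because over $\sigma_i$ the two fibrations are not of the same type: $\pi^{-1}(\sigma_i)$ contains a nodal fibre with vanishing cycle, while $\mu^{-1}(\sigma_i)$ is a union of regular Lagrangian tori degenerating to the orbifold point. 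The statement ``collapsing each $L_{p_i,q_i}$ continuously to $s_i$ produces the surjection'' is precisely what has to be \emph{constructed}; it is not a formal consequence of monodromy matching.

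The paper closes this gap with a sequence of nontrivial inputs that have no analogue in your sketch. It constructs the local model $\tilde\psi_a: S_{a,q}\to\C^2/\Gamma_{a,q}$ not by action–angle but by symplectic parallel transport in the explicit $\Q$-Gorenstein degeneration $\{w_1w_2=w_3^a+t\}$ (their Lemma~\ref{lem:local}, after Evans), which produces the pinwheel as a genuine vanishing cycle. It then has to reconcile this local model with the almost toric rational homology ball $B'_a=\pi^{-1}(V_a)$: these are two fillings of the same lens space, and the paper invokes the Golla–Starkston/Lisca uniqueness of symplectic rational homology ball fillings to identify them, Bonahon's classification of the mapping class group of lens spaces to correct the boundary diffeomorphism, a Munkres collar argument to make the diagram commute, and finally Gromov–Taubes to replace the resulting deformed symplectic form with the Fubini–Study form. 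None of these are soft action–angle or toric arguments, and the paper itself remarks in closing that a more elementary proof — say by equipping the local degeneration with a compatible family of integrable systems, in the spirit you suggest — is plausible but not currently known.
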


\begin{remark} 
	{\rm
		For the construction of this symplectomorphism, it seems plausible that one can use the existence of a~$\Q$-Gorenstein smoothing of~$\CP(a^2,b^2,c^2)$ to~$\CP^2$ with vanishing locus consisting of a union of three pinwheels. Such smoothings were constructed in~\cite{HacPro10}, by showing that there are no obstructions to piecing together local smoothings of the cyclic quotient singularities holomorphically. Our proof follows the same strategy in the symplectic set-up, see also~\cite[Examples 2.5/2.6]{EvaSmi18} for a discussion of this, ~\cite[Section 3]{LekMay14} and~\cite[Section 1]{Eva19} for the local smoothings. 
	}
\end{remark}

We now turn to the proof of Theorem~\ref{thm:main} using Proposition~\ref{lem:main}, the proof of which we postpone to~\S\ref{ssec:4}.\smallskip

\ni
\emph{Proof of Theorem~\ref{thm:main}.}
\emph{Step 1:} Let~$\alpha < 3$ and choose~$\alpha'$ such that~$\alpha < \alpha' < 3$. Pick a Markov triple~$(a,b,c)$ and an associated Markov triangle~$\Delta_{a,b,c} = \Delta_{a,b,c}(\alpha')$ which lies in~$\cs= \R_{\geqslant 0} \times [0,1)$. This is possible by Lemma~\ref{lem:markov}. Note that~$\Delta_{a,b,c}$ is the image of the toric orbifold moment map~$\mu \colon \CP(a^2,b^2,c^2) \rightarrow \Delta_{a,b,c}$, provided we normalize the orbifold symplectic form appropriately. See the discussion surrounding~\eqref{eq:mu} for details on the toric structure on weighted projective space. Let
\begin{equation}
	\label{eq:ellbdry}
	\cd = \mu^{-1}(\pp \Delta_{a,b,c}) \subset \CP(a^2,b^2,c^2)
\end{equation}
be the preimage of the boundary. The set~$\cd$ is a union of complex suborbifolds~$\CP(a^2,b^2) \cup \CP(b^2,c^2) \cup \CP(a^2,c^2)$ such that each of these suborbifolds projects to one edge of the triangle~$\Delta_{a,b,c}$. The complement of~$\cd$ admits a symplectic embedding,
\begin{equation}
	\label{eq:emb1}
	\psi\colon	
	\CP(a^2,b^2,c^2) \setminus \cd
	\hookrightarrow
	Z^4(1).
\end{equation}
Indeed, this follows from the inclusion~$\operatorname{int}(\Delta_{a,b,c}) \subset \operatorname{int}(\cs)$ and the fact that inclusions of toric moment map images which respect the boundary stratifications yield (equivariant) symplectic embeddings, see for example~\cite{Sym03}.\smallskip

\emph{Step 2:} By Proposition~\ref{lem:main} there is a symplectomorphism~$\phi$ from the complement of Lagrangian pinwheels~$L_{a,q_a}, L_{b,q_b}, L_{c,q_c}$ to the complement of the orbifold points of~$\CP(a^2,b^2,c^2)$. By restricting~$\phi$, we obtain the symplectomorphism,
\begin{equation}
	\label{eq:emb2}
	\phi' \colon
	\CP^2\setminus \Sigma'
	\rightarrow	
	\CP(a^2,b^2,c^2)\setminus \cd.
\end{equation}
Again by Proposition~\ref{lem:main}, the set~$\Sigma'$ consists of the union of three pinwheels and a symplectic two-torus. 
\smallskip

\emph{Step 3:} The standard embedding~$B^4(\alpha) \subset \CP^2(\alpha')$ together with~$\phi'$ and~$\psi$ yields an embedding~$B^4(\alpha) \setminus \Sigma \hookrightarrow Z^4(1)$. Here~$\Sigma$ denotes~$\Sigma = \Sigma' \cap  B^4(\alpha)$. The set~$\Sigma$ has Minkowski dimension two. Indeed, the embedding~$B^4(\alpha) \subset \CP^2(\alpha')$ is bilipschitz (its image being contained in a closed ball) and volume preserving and the set~$\Sigma'$ consists of the union of three pinwheels and a symplectic two-torus.
\proofend

As was pointed out to us by Leonid Polterovich, one can combine Theorem~\ref{thm:main} with Gromov's non-squeezing theorem to get certain rigidity results, reminiscent of~\cite[Theorem 1.B]{biran}.  

\begin{corollary}
	\label{cor:rigidity}
	Let~$\Sigma' \subset \CP^2(\alpha)$ be one of the above sets such that~$\CP^2(\alpha)\setminus \Sigma'$ embeds into~$Z^4(1)$ for some~$1< \alpha < 3$. Then every symplectic ball~$B^4(1+\varepsilon) \subset \CP^2$ for~$\varepsilon > 0$ intersects~$\Sigma'$.
\end{corollary}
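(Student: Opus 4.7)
The plan is to argue by contradiction using Gromov's non-squeezing theorem directly. Suppose there exists a symplectic ball $B^4(1+\varepsilon) \subset \CP^2(\alpha)$ which is disjoint from $\Sigma'$. Then the embedded image of this ball lies entirely inside $\CP^2(\alpha) \setminus \Sigma'$.

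By hypothesis there is a symplectic embedding
\[
\Psi \colon \CP^2(\alpha) \setminus \Sigma' \hookrightarrow Z^4(1).
\]
Composing the inclusion $B^4(1+\varepsilon) \hookrightarrow \CP^2(\alpha) \setminus \Sigma'$ with $\Psi$ would produce a symplectic embedding
\[
B^4(1+\varepsilon) \hookrightarrow Z^4(1).
\]
Since $1+\varepsilon > 1$, this contradicts Gromov's non-squeezing theorem in the normalization used throughout the paper (where $\alpha = \pi R^2$ measures the capacity), so no such ball can exist.

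The only thing to verify carefully is that the normalizations of the ball and cylinder capacities match between the main body and the appendix, but this is built into the notational conventions already fixed: $B^4(\alpha)$ has capacity $\alpha$ and $Z^4(1)$ has capacity $1$, so Gromov's theorem applies verbatim. No further ingredients (waist inequalities, almost toric fibrations, etc.) are needed for this corollary — it is a one-line consequence of Theorem~\ref{thm:main} together with classical non-squeezing. The main conceptual content is simply the observation that the set $\Sigma'$, while of Minkowski dimension two and hence of measure zero, is nevertheless \emph{symplectically} large enough to obstruct the embedding of any ball whose capacity exceeds that of the target cylinder.
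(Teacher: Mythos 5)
Your proof is correct and is essentially identical to the one given in the appendix: argue by contradiction, compose the hypothesized ball with the embedding $\CP^2(\alpha)\setminus\Sigma' \hookrightarrow Z^4(1)$, and invoke Gromov's non-squeezing. No meaningful difference in approach.
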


\proof
Assume~$B^4(1+\varepsilon) \subset \CP^2(\alpha)$ does not intersect~$\Sigma'$. The embedding~$\CP^2(\alpha)\setminus \Sigma' \hookrightarrow Z^4(1)$ yields a symplectic embedding~$B^4(1+\varepsilon) \hookrightarrow Z^4(1)$, contradicting non-squeezing.
\proofend

Note that for a fixed~$1 < \alpha < 3$ we get infinitely many sets~$\Sigma' \subset \CP^2$ to which Corollary~\ref{cor:rigidity} applies and all of these consist of a union of a symplectic torus and Lagrangian pinwheels.

\subsection{Proof of Proposition~\ref{lem:main}}
\label{ssec:4}

Following the exposition in~\cite[Example 2.5]{EvaSmi18} we consider smoothings of certain orbifold quotients of~$\C^2$. This yields the local version from Lemma~\ref{lem:local} of the symplectomorphism in Proposition~\ref{lem:main}. 

Let~$a,q$ be coprime integers with~$1\leqslant q < a$ and take the quotient of~$\C^2$ by the following action of~$a^2$-th roots of unity,
\begin{equation}
	\label{eq:action1}
	\zeta . (z_1,z_2) = (\zeta z_1 , \zeta^{aq -1}z_2), \quad
	\zeta^{a^2}=1.
\end{equation}
We denote this quotient by~$\C^2/\Gamma_{a,q}$. It can be embedded as~$\{w_1w_2 = w_3^a\}$ into the quotient~$\C^3/\Z_a$ by the action
\begin{equation}
	\eta . (w_1,w_2,w_3) = (\eta w_1, \eta^{-1} w_2, \eta^q w_3), \quad
	\eta^a = 1.
\end{equation}
The smoothing is given by 
\begin{equation}
	\label{eq:chi}
	\cx = \{ w_1w_2 = w_3^a +t \} \subset \C^3/\Z_a \times \C_t,
\end{equation}
which we view as a degeneration by projecting to the~$t$-component,~$\pi \colon \cx \rightarrow \C_t$. We denote the fibres by~$X_t = \pi^{-1}(t)$. The smooth fibre~$X_1$ is a rational homology ball and the vanishing cycle of the degeneration is a Lagrangian pinwheel~$L_{a,q}$. This follows from the description of~$\cx$ as~$\Z_a$-quotient of an~$A_{a-1}$-Milnor fibre. Let~$s \in X_0$ be the unique isolated singularity of~$X_0$, and~$\cx^{\rm reg} = \cx \setminus \{s\}$ its complement. The restriction of the standard symplectic form~$\omega_0$ on~$\C^4 = \C^3 \times \C_t$ yields a symplectic manifold~$(\cx^{\rm reg}, \Omega)$. Note that the smooth loci of the fibres~$X_{t \neq 0}$ and~$X_0 \setminus \{s\}$ are symplectic submanifolds. Let us now construct a symplectomorphism 
\begin{equation}
	\label{eq:sigma}
	\psi \colon X_1 \setminus L_{a,q} \rightarrow X_0\setminus \{s\} = \C^2/\Gamma_{a,q} \setminus \{0\}.
\end{equation}
For this, we take the connection on~$\cx^{\rm reg}$ defined as the symplectic complement to the vertical distribution,
\begin{equation}
	\label{eq:sympconn}
	\xi_x 
	= (\ker (\pi_*)_x)^{\Omega}
	= \left\{ v \in T_x \cx^{\reg} \mid \Omega (v,w) = 0 
	\mbox{ for all } w \in T_x \pi^{-1}(\pi(x)) 
	\right\}.
\end{equation}
This connection is symplectic in the sense that its parallel transport maps are symplectomorphisms whenever they are defined. In particular, we get a symplectomorphism between any two regular fibres~$X_{t_1}$ and~$X_{t_2}$ by picking a curve in~$\C^{\times}$ with endpoints~$t_1$ and~$t_2$. Since we are interested in the singular fibre for the construction of~\eqref{eq:sigma}, take the curve~$\gamma(r) = 1-r \in \C$. For every~$r<1$, this yields a symplectomorphism~$\psi^r \colon X_1 \rightarrow X_{1-r}$. As it turns out, setting~$\psi(x) = \lim_{r \rightarrow 1}\psi^r(x)$ for~$x\in X_1$ yields a well-defined surjective map~$\tilde{\psi} \colon X_1 \rightarrow X_0$ with vanishing cycle a Lagrangian pinwheel,~$L_{a,q} = \tilde{\psi}^{-1}(s)$, and which restricts to the desired symplectomorphism~\eqref{eq:sigma}. For the fact that~$\tilde{\psi}$ is well-defined, see the unpublished notes by Evans~\cite[Lemma 1.2]{Eva19}. We refer to~\cite[Section 3.1]{LekMay14} for more details on the specific degeneration we consider above and to~\cite[Lemma 1.20]{Eva19} for the fact that the vanishing cycle is a Lagrangian pinwheel. 

\begin{lemma}
	\label{lem:local}
	\emph{(Evans \cite{Eva19})}
	Let~$\C^2/\Gamma_{a,q}$ the quotient by the action~\eqref{eq:action1} and~$S_{a,q}$ its smoothing as above. Then there is a surjective map~$\tilde{\psi}_a \colon S_{a,q} \rightarrow \C^2/\Gamma_{a,q}$ with~$\tilde{\psi}_a(L_{a,q}) = \{0\}$ and which restricts to a symplectomorphism
	\begin{equation}
		\psi_a \colon S_{a,q} \setminus L_{a,q} \rightarrow (\C^2/\Gamma_{a,q}) \setminus \{0\}.
	\end{equation}
	Furthermore, the preimage of the set~$\cd_{a,q_a} = \{z_1z_2 = 0\}/\Gamma_{a,q}$ under~$\tilde{\psi}_a$ is the union of~$L_{a,q}$ and a symplectic cylinder intersecting~$L_{a,q}$ in its core circle. 
\end{lemma}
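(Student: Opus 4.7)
\emph{Proof proposal.} The strategy is to make explicit the parallel transport picture already sketched around \eqref{eq:chi}--\eqref{eq:sigma} and then to exploit the fact that the degeneration $\pi\colon\mathcal{X}\to\C_t$ is obtained from the standard $A_{a-1}$-Milnor fibration by a $\Z_a$-quotient. Specifically, I would endow $\mathcal{X}^{\reg}$ with the restriction $\Omega$ of $\omega_0$ from $\C^3 \times \C_t$ and use the symplectic connection $\xi$ from \eqref{eq:sympconn}. Parallel transport along $\gamma(r)=1-r$ for $r\in[0,1)$ yields a smooth family of symplectomorphisms $\psi^r\colon X_1\to X_{1-r}$, and I would define $\tilde{\psi}_a(x)=\lim_{r\to 1^-}\psi^r(x)$ whenever this limit exists.

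The first, and main, analytical step is to identify the set $L_{a,q}\subset X_1$ of points where this limit degenerates to the singular point $s\in X_0$, and to show that on $X_1\setminus L_{a,q}$ the limit exists and defines a diffeomorphism onto $(\C^2/\Gamma_{a,q})\setminus\{0\}$. To do this I would pass to the unbranched $A_{a-1}$-cover $\widetilde{\mathcal{X}}=\{w_1w_2=w_3^a+t\}\subset\C^3\times\C_t$; the $\Z_a$-action of \eqref{eq:action1} lifts to $\widetilde{\mathcal{X}}$ and the covering $\widetilde{\mathcal{X}}\to\mathcal{X}$ is a local symplectomorphism on regular loci, hence pulls back $\xi$ to the analogous symplectic connection on $\widetilde{\mathcal{X}}^{\reg}$. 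For the $A_{a-1}$ degeneration the parallel transport picture is classical: the vanishing cycle of $\widetilde{X}_1$ over $\gamma$ is a chain of $a-1$ Lagrangian 2-spheres joined in their poles, and parallel transport converges smoothly off this chain (one reference is \cite[\S 3]{LekMay14}; alternatively one can write down a Hamiltonian realization of the transport and estimate directly). Descending via $\Z_a$ collapses the chain to a single Lagrangian pinwheel $L_{a,q}\subset X_1$ and gives well-definedness of $\tilde{\psi}_a$ on the complement.

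The second step is to verify that $L_{a,q}$ is genuinely a Lagrangian $(a,q)$-pinwheel with the twisting parameter $q$ determined by the representation \eqref{eq:action1}. Here the $\Z_a$-action preserves the chain and cyclically rotates the $a-1$ spheres, so the quotient is topologically a disk glued to its boundary circle by the degree-$a$ covering; the Lagrangian condition is inherited from the chain since parallel transport by $\xi$ is symplectic. The identification of the twisting $q$ is computed from the weights $(1,aq-1)$ by examining the $\Z_a$-action on the tangent space of the core circle; this is exactly \cite[Lemma 1.20]{Eva19}, which I would invoke.

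For the final assertion I would analyze the preimage of $\mathcal{D}_{a,q}=\{z_1z_2=0\}/\Gamma_{a,q}$. In the coordinates of \eqref{eq:chi} the divisor $\{w_1w_2=0\}\subset\mathcal{X}$ is $\pi$-equivariant and, on each smooth fiber $X_t$, cuts out a pair of curves meeting $a$-fold along $\{w_3^a=-t\}/\Z_a$, which in the limit $t\to 0$ collapses to the two coordinate axes through $0\in\C^2/\Gamma_{a,q}$. Parallel transport of these curves from $t=1$ back along $\gamma$ is preserved by $\xi$ (since $\{w_1w_2=0\}\cap\mathcal{X}^{\reg}$ is a $\pi$-horizontal symplectic submanifold after quotienting), and the union $\tilde{\psi}_a^{-1}(\mathcal{D}_{a,q})$ is therefore a symplectic cylinder together with the limiting pinwheel $L_{a,q}$; a short check at the endpoints of the chain shows the cylinder meets $L_{a,q}$ exactly in its core circle. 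The main obstacle throughout is the first step: carefully controlling the parallel transport near the critical fiber, and matching the topological type of the collapsed cycle against the definition of a Lagrangian $(a,q)$-pinwheel. Once those are in hand the rest is bookkeeping on the degeneration.
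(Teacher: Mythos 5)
Your first two paragraphs re-derive material the paper simply cites from Evans (\cite{Eva19}); that is not wrong, just extra work. The content the paper actually needs to prove, however, is the identification of the preimage of $\cd_{a,q_a}$, and there your argument has a real gap.

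The issue is the identification of $\cd_{a,q_a} = \{z_1z_2 = 0\}/\Gamma_{a,q}$ inside the model $\cx$. Under the embedding $\C^2/\Gamma_{a,q} \hookrightarrow \C^3/\Z_a$, which is given on invariants by $(w_1,w_2,w_3) = (z_1^a, z_2^a, z_1z_2)$, the divisor $\{z_1z_2 = 0\}$ corresponds to $\{w_3 = 0\}$, not to $\{w_1w_2 = 0\}$. These two agree on the singular fibre $X_0$ (because $w_1w_2 = w_3^a$ there), but they are different subsets of $\cx$: for $t\neq 0$, $\{w_3=0\}\cap X_t = \{w_1w_2 = t\}$ is a single smooth conic, i.e.\ a symplectic cylinder, whereas $\{w_1w_2=0\}\cap X_t$ is a pair of lines hitting $\{w_3^a = -t\}$, with multiple components. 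The lemma asserts a single cylinder; already the topology of the object you chose to parallel transport does not match.

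More seriously, your parenthetical claim that $\{w_1w_2=0\}\cap\cx^{\reg}$ is preserved by the symplectic connection is false, and this is exactly what the paper's argument is designed to address. At a point $x = (0,x_2,x_3)\in X_t$ with $x_1=0$, one has $x_3^a = -t \neq 0$, so the gradient of $w_1w_2 - w_3^a - t$ is $(\bar x_2, 0, -a\bar x_3^{\,a-1})$, which has a nonzero $u_1$-component; hence the horizontal space $(T_xX_t)^{\omega_{\C^3}}$ is generically not tangent to $\{w_1=0\}$, and parallel transport pushes points off the divisor. The paper instead parallel-transports $\{w_3=0\}$: there $\{u_1=u_2=0\}\subset T_xX_t$, so $(T_xX_t)^{\omega_{\C^3}}\subset\{u_3=0\}$, and the set $\bigcup_t\bigl(\{w_3=0\}/\Z_a \cap X_t\bigr)$ is genuinely horizontally invariant. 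Replacing $\{w_1w_2=0\}$ by $\{w_3=0\}$ throughout the third paragraph, and redoing the tangent-space check as above, would repair the argument and match what the paper does.
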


\proof
As explained above, the main statement of the lemma follows from~\cite{Eva19}. We only need to identify the preimage of~$\cd_{a,q_a} = \{z_1z_2 = 0\}/\Gamma_{a,q}$, which can be done by keeping track of the parallel transport in the explicit model~\eqref{eq:chi}. Under the identification of~$\C^2/\Gamma_{a,q}$ with~$X_0$ (which we will tacitly use throughout), the set~$\{z_1z_2 = 0\}/\Gamma_{a,q}$ corresponds to~$\{w_3 = 0\}/\Z_a \cap X_0$. We claim that the set~$\bigcup_t (\{w_3 = 0\}/\Z_a \cap X_t) \subset \cx$ is invariant under the symplectic parallel transport induced by~\eqref{eq:sympconn}. This proves that~$\tilde{\psi}_a^{-1}(\{w_3 = 0\}/\Z_a \cap X_0) = (\{w_3 = 0\}/\Z_a \cap X_1) \cup L_{a,q}$. Indeed, the preimage of~$0 \in \C^2/\Gamma_{a,q}$ is~$L_{a,q}$ and on the complement of~$L_{a,q}$, the map~$\tilde{\psi}_a$ is a diffeomorphism. To see that~$\{w_3 = 0\}/\Z_a \cap X_1$ is a symplectic cylinder intersecting~$L_{a,q}$ in its core circle, one can consider the singular fibration structure of the map~$X_s \rightarrow \C, (w_1,w_2,w_3) \mapsto w_3^p$ see~\cite[\S 1.2.3]{Eva19} for more details. Thus it remains to show the invariance of~$\bigcup_t (\{w_3 = 0\}/\Z_a \cap X_t)$. 

Let~$x \in \{w_3 = 0\}/\Z_a \cap X_t$, meaning that~$x$ is a~$\Z_a$-class of a point~$(x_1,x_2,0) \in \C^3$ with~$x_1x_2 = t$. This gives a natural inclusion~$T_{(x,t)}\cx = T_xX_t \oplus T_t\C \subset \C^3 \oplus \C$. Since~$\Omega = \omega_{\C^3} \oplus \omega_{\C}$, the horizontal lift by the symplectic connection~\eqref{eq:sympconn} of a vector~$v \in T_t\C = \C$ is given by~$u + v \in \xi_{(x,t)}$, where~$u \in (T_xX_t)^{\omega_{\C^3}}$. Note that the subspace~$\{u_1 = u_2 = 0\} \subset T_x\C^3 = \C^3$ is contained in~$T_xX_t$ and hence its symplectic complement~$\{u_3 = 0\} \subset \C^3$ contains the symplectic complement~$(T_xX_t)^{\omega_{\C^3}}$. Since~$u$ is contained in~$(T_xX_t)^{\omega_{\C^3}}$, this proves that~$u$ is tangent to the subset~$\{w_3 = 0\}$ and hence this subset is preserved under parallel transport.
\proofend

We turn to the proof of Proposition~\ref{lem:main}. The main idea is to use the fact that~$\Delta_{a,b,c}(\alpha)$ is both the almost toric base polytope of~$\CP^2(\alpha)$ and the toric base of~$\CP(a^2,b^2,c^2)$ with a suitably normalized symplectic form. This shows that there is a symplectomorphism which intertwines the (almost) toric structures on the preimages of a complement of neighbourhoods of the vertices. For example one can choose~$W \subset \Delta_{a,b,c}$ as in Figure~\ref{fig:2}. We use Lemma~\ref{lem:local} to extend this symplectomorphism. 

Let us make a few preparations. In particular, we discuss how to use the quotient~$\C^2/\Gamma_{a,q}$ as a local toric model for~$\CP(a^2,b^2,c^2)$. This is the orbifold version of the toric ball embedding into~$\CP^2$ one obtains by the inclusion of the simplex with one edge removed into the standard simplex~$\Delta_{1,1,1}$. The toric structure on~$\C^2/\Gamma_{a,q}$ is induced by the standard toric structure~$(z_1,z_2)\mapsto(\pi\vert z_1 \vert^2,\pi\vert z_2\vert^2)$ on~$\C^2$. Indeed, note that the~$\Gamma_{a,q}$-action is obtained by restricting the standard~$T^2=(\R/\Z)^2$-action to a discrete subgroup~$\Z_{a^2}$. This implies that we obtain an induced action by~$T^2/\Gamma_{a,q}$ on~$\C^2/\Gamma_{a,q}$. This action is Hamiltonian and its moment map image, under a suitable identification~$T^2 \cong T^2/\Gamma_{a,q}$, is given by
\begin{equation}
	\angle_{a,q} = \{ x_1 w_1 + x_2 w_2 \,\vert\, x_1,x_2 \geqslant 0 \}, \quad 
	w_1 =
	\begin{pmatrix}
		1 \\
		0
	\end{pmatrix},
	w_2 = 
	\begin{pmatrix}
		aq -1 \\
		a^2
	\end{pmatrix}.
\end{equation}
See for example~\cite[Remark 2.7]{EvaSmi18} or~\cite[Section 9]{Sym03}. Note that a ball~$B^4(d) = \{ \pi(\vert z_1 \vert^2 + \vert z_2 \vert^2) < d \} \subset \C^2$ quotients to an orbifold ball~$B^4(d)/\Gamma_{a,q} \subset \C^2/\Gamma_{a,q}$ which is fibred by the induced toric structure on the quotient. Furthermore, the boundary sphere~$S^3(d) \subset \C^2$ quotients to a lens space~$\Sigma_{a,q}(d) = S^3(d)/\Gamma_{a,q}$ of type~$(aq-1,a^2)$ equipped with its canonical contact structure and which fibers over a segment in~$\angle_{a,q}$. We will use this fact in the proof of Proposition~\ref{lem:main}. 

Let us now show that, for a suitable choice of~$q$, the toric system~$\C^2/\Gamma_{a,q} \rightarrow \angle_{a,q}$ can be used as a local model around one of the orbifold points of the toric system on~$\CP(a^2,b^2,c^2)$. In order to get a concrete description of this toric system on the weighted projective space, recall that the symplectic orbifold~$\CP(a^2,b^2,c^2)$ can be defined as a symplectic quotient of~$\C^3$,
\begin{equation}
	\label{eq:habc}
	\CP(a^2,b^2,c^2) = H^{-1}(a^2b^2c^2)/S^1, \quad
	H = a^2 \vert z_1 \vert^2 + b^2 \vert z_2 \vert^2 + c^2 \vert z_3 \vert^2.
\end{equation}
This description~\eqref{eq:habc} has the advantage that it is naturally equipped with a Hamiltonian~$T^2$-action inherited from the standard~$T^3$-action on~$\C^3$. This induced action is toric and its moment map image is given by the intersection of the plane defined by~$H$ and the positive orthant,
\begin{equation}
	\label{eq:deltaabc}
	\tilde{\Delta}_{a,b,c} 
	=
	\{ a^2y_1 + b^2y_2 + c^2y_3 = a^2b^2c^2 \} \cap \R^3_{\geqslant 0}.
\end{equation}
Note that this is a polytope in~$\R^3$ and not~$\R^2$. We get a Markov triangle~$\Delta_{a,b,c}(abc) \subset \R^2$ as in \S\ref{ssec:2} by setting
\begin{equation}
	\label{eq:phidelta}
	\Delta_{a,b,c}(abc) = \Phi^{-1} ( \tilde{\Delta}_{a,b,c}),
\end{equation}
for~$\Phi$ an integral affine embedding (see Definition~\ref{def:iae}) containing~$\tilde{\Delta}_{a,b,c}$ in its image. Recall that this produces the same triangles (up to integral affine equivalence) as those obtained from almost toric fibrations of~$\CP^2$ as discussed in~\S\ref{ssec:2}, see~\cite[Section 2]{Via16}. Hence it makes sense to denote them by~$\Delta_{a,b,c}(abc)$. The normalization~$\alpha = abc$ of the triangle comes from the choice of level at which we have reduced in~\eqref{eq:habc}.

\begin{definition}
	\label{def:iae}
	An affine map~$\Phi \colon \R^2 \rightarrow \R^3, x \mapsto Ax + b$ for~$A \in \Z^{3 \times 2}$ and~$b \in \R^3$ is called \emph{integral affine embedding} if it is injective and if~$A(\Z^2) = A(\R^2) \cap \Z^3$. 
\end{definition}

Note that by the definition of integral affine embedding, the definition~\eqref{eq:phidelta} makes sense and the polytope it defines is independent of the choice of~$\Phi$ up to applying an integral affine transformation. We now show that there is a natural number~$q$ and an integral affine embedding~$\Phi_{a,q}$ such that the triangle~$\Phi^{-1}_{a,q}( \tilde{\Delta}_{a,b,c}) \subset \R^2$ is obtained by intersecting~$\angle_{a,q}$ with a half-plane. Indeed, set
\begin{equation}
	\Phi_{a,q} \colon \R^2 \rightarrow \R^3, \quad
	\begin{pmatrix}
		x_1 \\
		x_2
	\end{pmatrix}
	\mapsto 
	\begin{pmatrix}
		-b^2 & 1 + \frac{b}{a}(bq -3c) \\
		a^2 & 1 - aq \\
		0 & 1
	\end{pmatrix}
	\begin{pmatrix}
		x_1 \\
		x_2
	\end{pmatrix}
	+
	\begin{pmatrix}
		b^2c^2 \\
		0 \\
		0
	\end{pmatrix},
\end{equation}
where~$q$ satisfies~$bq =  3c \; \operatorname{mod} a$, see also~\cite[Example 2.6]{EvaSmi18}. The map~$\Phi_{a,q}$ has image~$\{ a^2y_1 + b^2y_2 + c^2y_3 = a^2b^2c^2 \}$ and it is an integral affine embedding, as can be checked by a computation. Furthermore~$\Phi$ maps~$0$ to the vertex~$(b^2c^2,0,0)$ (corresponding to~$a^2$) of~$\tilde{\Delta}_{a,b,c}$ and~$v_1$ and~$v_2$ to the outgoing edges at~$(b^2c^2,0,0)$. This means that there is an integral vector~$(\xi_1,\xi_2)$ defining a half-plane~$K=\{ \xi_1x_1 + \xi_2x_2 \leqslant k \}$ such that 
\begin{equation}
	\Phi^{-1}_{a,q}( \tilde{\Delta}_{a,b,c})
	=
	K \cap \angle_{a,q}.
\end{equation}
From this we deduce the desired toric model. Let~$\tilde{E}_a \subset \tilde{\Delta}_{a,b,c}$ be the edge opposite the vertex~$(b^2c^2,0,0)$.

\begin{proposition}
	\label{prop:toricmodel}
	The subset in~$\CP(a^2,b^2,c^2)$ fibering over~$\tilde{\Delta}_{a,b,c} \setminus \tilde{E}_a$ is fibred (orbifold-) symplectomorphic to the subset in~$\C^2/\Gamma_{a,q}$ fibering over~$\Int K \cap \angle_{a,q}$.
\end{proposition}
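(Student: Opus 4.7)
The plan is to realize both sides as effective toric symplectic orbifolds with a Hamiltonian $T^2$-action, check that their labelled moment polytopes coincide under the integral affine embedding $\Phi_{a,q}$, and then invoke the Lerman--Tolman uniqueness theorem for toric symplectic orbifolds to produce the desired fibered symplectomorphism. The left hand side carries the toric $T^2$-action on $\CP(a^2,b^2,c^2)=H^{-1}(a^2b^2c^2)/S^1$ induced from the standard $T^3$-action on $\C^3$, with moment map image $\tilde\Delta_{a,b,c}$; restricting over $\tilde\Delta_{a,b,c}\setminus\tilde E_a$ just removes the suborbifold $\CP(b^2,c^2)$ lying over $\tilde E_a$ and yields an open toric symplectic orbifold with corner polytope at the vertex $(b^2c^2,0,0)$. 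The right hand side inherits the standard toric $T^2$-action from $\C^2$ (since $\Gamma_{a,q}\subset T^2$) with moment map image $\angle_{a,q}$, and restricting over $\Int K\cap\angle_{a,q}$ is again an open toric symplectic orbifold.

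The matching of moment data is exactly the content of the construction of $\Phi_{a,q}$: it is an integral affine embedding with $\Phi_{a,q}^{-1}(\tilde\Delta_{a,b,c})=K\cap\angle_{a,q}$ and it sends the two edges of $\angle_{a,q}$ emanating from $0$ (with primitive inward normals $w_1=(1,0)$, $w_2=(aq-1,a^2)$) to the two facets of $\tilde\Delta_{a,b,c}$ meeting at $(b^2c^2,0,0)$. The main arithmetic to verify is that the isotropy labels on these two edges agree on the nose. On the $\CP(a^2,b^2,c^2)$-side, symplectic reduction at the slice $z_1\in\R_{>0}$ identifies a neighbourhood of $s_a$ with the cyclic quotient $\C^2/\Z_{a^2}$ where $\zeta\cdot(z_2,z_3)=(\zeta^{b^2}z_2,\zeta^{c^2}z_3)$ for $\zeta^{a^2}=1$; on the $\C^2/\Gamma_{a,q}$-side the weights are $(1,aq-1)$. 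Since $a,b,c$ are pairwise coprime for a Markov triple, $b$ is invertible modulo $a^2$, and the two $\Z_{a^2}$-actions differ by reparameterising $\zeta\mapsto\zeta^m$ with $m\equiv b^{-2}\pmod{a^2}$; a direct calculation using the Markov equation $a^2+b^2+c^2=3abc$ shows that the compatibility $mc^2\equiv aq-1\pmod{a^2}$ reduces to $bq\equiv 3c\pmod a$, which is precisely the defining condition for $q$.

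Once the labels match at the vertex, the same integral affine equivalence automatically matches labels along the two open edges, and over the interior $\Int(K\cap\angle_{a,q})$ the $T^2$-action is free on both sides, so action--angle coordinates (Arnold--Liouville) identify the two sides canonically once a basepoint is chosen. Piecing these local models together via Lerman--Tolman's classification (applied to the open toric orbifold $K\cap\angle_{a,q}$, open along the edge coming from $\partial K$) produces an equivariant, moment-map-preserving orbifold symplectomorphism, which is exactly the fibered symplectomorphism claimed. The main obstacle is the bookkeeping in the matching of orbifold isotropy in the previous paragraph; in practice this step is the only place where the Markov equation and the specific choice of $q$ enter, and once it is carried out the rest is a standard application of the toric orbifold classification.
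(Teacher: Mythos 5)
Your proof follows the same route as the paper's: establish an integral affine equivalence of the moment polytopes via $\Phi_{a,q}$ and then invoke the Lerman--Tolman classification of toric symplectic orbifolds by labelled polytopes. Two remarks on the details. The isotropy arithmetic you verify at the vertex $s_a$ (that $bq \equiv 3c \bmod a$ is equivalent to $mc^2 \equiv aq-1 \bmod a^2$ with $m \equiv b^{-2}$) is correct but is already built into the construction of $\Phi_{a,q}$ immediately preceding the proposition; what Lerman--Tolman actually needs, beyond the integral affine equivalence of polytopes, is that the facet labels match, and these are all $1$ on both sides because Markov triples are pairwise coprime (so $\gcd(b^2,c^2)=1$ and $\gcd(aq-1,a^2)=1$). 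The one place where the paper's argument is cleaner is the non-compactness: Lerman--Tolman as usually stated applies to compact toric orbifolds, so ``piecing together local models'' over the half-open polytope $\Int K\cap\angle_{a,q}$ needs justification; the paper sidesteps this by performing a symplectic cut along $\{\xi_1 x_1 + \xi_2 x_2 = k\}$, applying the classification to the resulting compact orbifolds, and then restricting back over the open polytope.
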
 
\proof
We have shown above that~$\tilde{\Delta}_{a,b,c} \setminus \tilde{E}_a$ and~$\Int K \cap \angle_{a,q}$ are integral affine equivalent. This implies the claim by the classification of compact toric orbifolds by their moment map images, see~\cite{LerTol97}. Compactness is not a problem here, since we can compactify the subset fibering over~$K \cap \angle_{a,q}$ by performing a symplectic cut at~$\{\xi_1x_1 + \xi_2x_2 = k\}$. 
\proofend

Let us now fix a moment map
\begin{equation}
	\label{eq:mu}
	\mu \colon \CP(a^2,b^2,c^2) \rightarrow \Delta_{a,b,c} = \Delta_{a,b,c}(abc) \subset \R^2,
\end{equation}
by composing the moment map~$\CP(a^2,b^2,c^2) \rightarrow \tilde{\Delta}_{a,b,c}$ with the inverse of a suitable integral affine embedding as described above. Until the end of the proof of Proposition~\ref{lem:main} we simplify notation by writing~$\Delta_{a,b,c} = \Delta_{a,b,c}(abc)$.\smallskip

\ni
\emph{Proof of Proposition~\ref{lem:main}.} 
The main part of the proof will be concerned with proving the existence of the symplectomorphism~\eqref{eq:mainsymplecto} and for readability, we postpone the proof of the existence of the global map~$\tilde{\phi}$ and the computation of~$\tilde{\phi}^{-1}(\cd)$ to \emph{Step 5}.

\emph{Step 1.} We start by setting up some notation on the side of the weighted projective space. The orbifold points~$s_a,s_b,s_c \in \CP(a^2,b^2,c^2)$ are mapped to the vertices~$v_a,v_b,v_c \in \Delta_{a,b,c}$ under the moment map~$\mu \colon \CP(a^2,b^2,c^2) \mapsto \Delta_{a,b,c}$. Let us first focus on the orbifold point~$s_a$. Denote edge opposite to~$v_a$ by~$E_a$. By Proposition~\ref{prop:toricmodel}, there is an orbifold symplectomorphism
\begin{equation}
	\rho_a \colon \mu^{-1}(\Delta_{a,b,c}\setminus E_a) \rightarrow \mu_{\C^2/\Gamma_{a,q}}^{-1}(\Int K \cap \angle_{a,q})
\end{equation}
which intertwines the toric structures. Now let~$\overline{B^4}(d)/\Gamma_{a,q} \subset \C^2/\Gamma_{a,q}$ be a closed orbifold ball for~$\overline{B^4}(d)=\{\pi(\vert z_1 \vert^2 + \vert z_2 \vert^2) \leqslant d \}$ and~$d>0$. Its boundary~$S^3(d)/\Gamma_{a,q} \subset \C^2/\Gamma_{a,q}$ is a lens space equipped with the standard contact structure. Note that both the orbifold ball and its boundary are fibred by the moment map~$\mu_{\C^2/\Gamma_{a,q}}$. Since~$\rho_a$ intertwines the toric structures, the image sets
\begin{equation}
	\label{eq:orbiballs}
	B^{\rm orb}_{a} = \rho^{-1}_a(\overline{B^4}(d)/\Gamma_{a,q}), \quad
	\Sigma_{a} = \rho^{-1}_a(S^3(d)/\Gamma_{a,q}),
\end{equation}
are fibred by~$\mu$. Then the image of the pair~$(B_{a},\Sigma_{a})$ under~$\mu$ is a pair~$(V_a,\ell_a)$ consisting of a segment contained in a triangle around the vertex~$v_a$. Note also that the lens space~$\Sigma_a$ is naturally equipped with its standard contact structure. We do the same procedure around the remaining vertices~$v_b,v_c$ and denote the corresponding sets by~$B^{\rm orb}_b,B^{\rm orb}_c,\Sigma_b,\Sigma_c \subset \CP(a^2,b^2,c^2)$ and by~$V_b,V_c,\ell_b,\ell_c \subset \Delta_{a,b,c}$. We choose the sizes such that~$B^{\rm orb}_a,B^{\rm orb}_b,B^{\rm orb}_c$ are mutually disjoint. Furthermore, we choose a set~$W \subset \Delta_{a,b,c}$ such that~$\Delta_{a,b,c} = W \cup V_a \cup V_b \cup V_c$ and such that the overlap~$W \cap V_j$ is a strip around~$\ell_j$ for all~$j \in \{a,b,c\}$. Again, see Figure~\ref{fig:2}.

\begin{figure}
	\begin{center}
		\begin{tikzpicture}[scale=0.8]
			\node[inner sep=0pt] at (0,0)
			{\includegraphics[trim={0.5cm 5cm 1.2cm 0.5cm},clip,scale=0.64]						{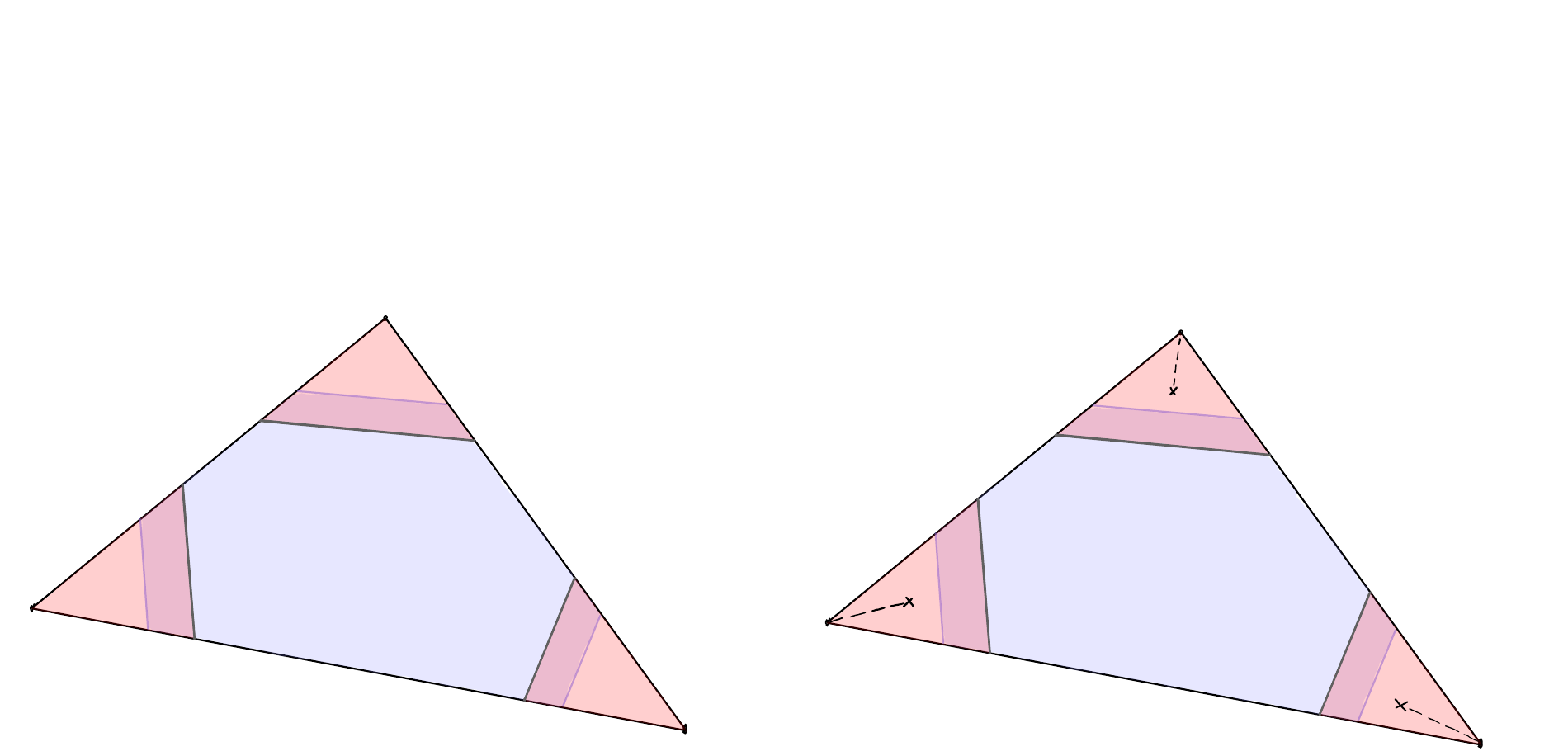}};
			\node at (-4.2,-1.4){$W$};
			\node at (3.6,-1.6){$W$};
			\node at (-7.65,-2){$v_a$};
			\node at (-6.6,-2.1){$V_a$};
			\node at (-5.75,-2.85){$\ell_a$};
			\node at (-3.9,0.85){$v_b$};
			\node at (-4,0.1){$V_b$};
			\node at (-2.75,-0.65){$\ell_b$};
			\node at (-0.7,-3.3){$v_c$};
			\node at (-1.7,-2.9){$V_c$};
			\node at (-1.85,-1.8){$\ell_c$};
		\end{tikzpicture}
		\caption{The triangle~$\Delta_{a,b,c}$ as union~$W \cup V_a \cup V_b \cup V_c$, on the left as the toric moment polytope of~$\CP(a^2,b^2,c^2)$ and on the right as almost-toric base diagram of~$\CP^2$. In both cases the fibration is toric over~$W$ and lens spaces fibre over the segments~$\ell_a,\ell_b,\ell_c$.} 
		\label{fig:2}
	\end{center}
\end{figure}

\emph{Step 2: } Now consider the almost-toric fibration of~$\CP^2$ associated to the triangle~$\Delta_{a,b,c}$. In the conventions of~\cite{Sym03} and~\cite{Via16} the triangle~$\Delta_{a,b,c}$ is decorated with three dashed line segments of prescribed slope between the vertices and the nodal points. The latter are usually marked by a cross. There is a map~$\pi \colon \CP^2 \rightarrow \Delta_{a,b,c}$ which is a standard toric fibration away from the dashed lines, but which is only continuous on the pre-images of the dashed lines (which encode monodromy of the integral affine structure). By applying \emph{nodal slides} if necessary, we may assume that the dashed lines lie outside of the subset~$W \subset \Delta_{a,b,c}$. Since the projection~$\pi$ is standard toric away from the dashed lines, this implies that there is a symplectomorphism
\begin{equation}
	\phi_0 \colon \CP^2 \supset \pi^{-1}(W) \rightarrow \mu^{-1}(W) \subset \CP(a^2,b^2,c^2),
\end{equation}
which intertwines~$\pi$ and~$\mu$. Define the preimages
\begin{equation}
	B'_{a} = \pi^{-1}(V_a), \quad
	\Sigma'_{a} = \pi^{-1}(\ell_a).
\end{equation}
By~\cite[Section 9]{Sym03}, the set~$B_a'$ is a closed rational homology ball and~$\Sigma'_a$ is a lens space of type~$(aq-1,a^2)$ equipped with its standard contact structure. In fact,~$\phi_0$ maps the copy~$\Sigma_a'$ of the lens space to the copy~$\Sigma_a$. However, contrary to~$B_a^{\rm orb}$, the rational homology ball~$B_a'$ is smooth. The same discussion holds for~$b$ and~$c$.

\emph{Step 3: } The key part of the proof is finding extensions
\begin{equation}
	\label{eq:extensions}
	\phi_j   \colon B_j' \setminus L_{j,q_j} \rightarrow B_j^{\rm orb} \setminus \{s_j\} 
\end{equation}
of the map~$\phi_0\vert_{\Sigma'_j}$, where~$L_{j,q_j}$ are Lagrangian pinwheels for~$j \in \{a,b,c\}$. For this we use Lemma~\ref{lem:local}. Again, restricting our attention to~$a$, let~$\psi_a$ be the symplectomorphism from Lemma~\ref{lem:local}. Note that we have already established the correspondence between~$B^{\rm orb}_a$ and~$\C^2/\Gamma_{a,q}$ by~$\rho_a$ and that this correspondence is compatible with the toric picture. We now establish a correspondence between the rational homology sphere~$B'_{a}$ and the space~$S_{a,q}$ coming from the smoothing in Lemma~\ref{lem:local}. Define yet another copy~$\Sigma_{a}''$ of the lens space by setting~$\Sigma''_{a} = \psi_a^{-1}(\rho_a(\Sigma_{a,q}))$. This lens space is also equipped with the standard contact structure and it bounds a rational homology ball~$B_a''$ by~\cite[Example 2.5]{EvaSmi18}. We now have two pairs~$(B_a',\Sigma'_a)$ and~$(B_a'',\Sigma''_a)$ consisting of a rational homology ball bounded by a lens space carrying its standard contact structure. By~\cite[Proposition A.2]{GolSta21} which relies on~\cite{Lis08}, this implies that~$(B_a',\Sigma'_a)$ and~$(B_a'',\Sigma''_a)$ are equivalent up to symplectic deformation. Let~$\chi_a \colon (B_a',\Sigma_a') \rightarrow (B_a'', \Sigma_a'')$ be the diffeomorphism we obtain from this. Note that we cannot directly use the symplectic deformation to conclude, since the symplectomorphism obtained from a Moser-type argument may not restrict to the desired map on the boundary. More precisely, we obtain a diagram of diffeomorphisms of lens spaces,
\begin{equation}
	\label{eq:diagram}
	\begin{tikzcd}
		B_a' \supset \Sigma'_a \arrow{r}{\phi_0\vert_{\Sigma'_a}} \arrow{d}{\chi_a\vert_{\Sigma'_a}}
		& \Sigma_a \subset B_a^{\rm orb} \arrow{d}{\rho_a\vert_{\Sigma_a}} \\
		B_a'' \supset \Sigma''_a \arrow{r}{\psi_a\vert_{\Sigma_a''}}
		& S^3(d)/\Gamma_{a,q} \subset B^4(d)/\Gamma_{a,q},
	\end{tikzcd}
\end{equation} 
and this diagram \emph{does not commute}. We may however correct the diffeomorphism~$\chi_a$ so that~\eqref{eq:diagram} commutes. Recall that~$\Sigma_{a,q}$ is a lens space of type~$(aq-1,a^2)$. Since~$(aq-1)^2 \neq \pm 1 \mod a^2$, it follows from~\cite[Th\'eor\`eme 3 (a)]{Bon83} that the space of diffeomorphisms of~$\Sigma_a''$ has two components, namely the one of the identity and the one of the involution~$\tau$ induced by the involution $(z_1,z_2) \mapsto (\overline z_1, \overline z_2)$ of~$S^3$. The diffeomorphism~$\tau$ extends to a diffeomorphism~$\tilde{\tau}$ of~$B''_{p,q}$. Up to post-composing~$\chi_a$ with~$\tilde{\tau}$, we may thus assume that~$(\psi_a^{-1}\rho_a\phi_0\chi_a^{-1})\vert_{\Sigma_a''}$ is isotopic to the identity by an isotopy~$\varphi_t$. Using this isotopy, we can correct the diffeomorphism~$\chi_a$ such that the diagram~\eqref{eq:diagram} commutes. Indeed, the set~$\pi^{-1}(V_a \cap W)$ is a collar neighbourhood~$\Sigma'_a \times [0,2)$ and thus we can use the collar coordinate together with the isotopy~$\varphi_t$ to define a corrected diffeomorphism~$\tilde{\chi}_a$ which coincides with the original diffeomorphism~$\chi_a$ on~$\mu^{-1}(V_a \setminus W)$ and with~$(\psi_a^{-1}\rho_a\phi_0)\vert_{\Sigma_a'}$ on~$\Sigma'_a$. Recall that two collar neighbourhoods which agree on the boundary coincide up to applying a smooth isotopy, see Munkres~\cite[Lemma 6.1]{Mun16}. This means that, after applying an isotopy in~$(B_a'',\Sigma_a'')$, we can assume that the corrected version of~$\chi_a$ and~$\psi_a^{-1}\rho_a\phi_0$ agree on a smaller collar~$\Sigma_a \times [0,1)$. Denoting the diffeomorphism we obtain in this way by~$\tilde{\chi}_a$, this allows us to define a diffeomorphism
\begin{equation}
	\label{eq:phij}
	\phi_a = \rho_a^{-1} \circ \psi_a \circ \tilde{\chi}_a \vert_{B_a' \setminus L_{a,q_a}} \colon
	B_a' \setminus L_{a,q_a} \rightarrow B^{\rm orb}_a \setminus \{s_a\},
\end{equation}
which extends~$\phi_0$ in the sense that it agrees with~$\phi_0$ on a collar of~$\Sigma_a'$. Since~$\psi_a$ is defined outside of a Lagrangian pinwheel~$L_{a,q_a} \subset B_a''$, the diffeomorphism~$\phi_a$ is defined outside of a pinwheel (which we again denote by~$L_{a,q_a}$) in~$B_a'$. We repeat this procedure for~$b$ and~$c$ to obtain diffeomorphisms~$\phi_b$ and~$\phi_c$.

\emph{Step 4: } By construction, the diffeomorphisms~$\phi_a,\phi_b,\phi_c$ extend the initial symplectomorphism~$\phi_0 \vert_{\pi^{-1}(W \setminus (V_a \cup V_b \cup V_c) )}$ and hence we obtain a diffeomorphism
\begin{equation}
	\label{eq:phihat}
	\widehat{\phi} \colon 
	\CP^2 \setminus (L_{a,q_a} \sqcup L_{b,q_b} \sqcup L_{c,q_c})
	\rightarrow 
	\CP(a^2,b^2,c^2) \setminus \{s_a,s_b,s_c\} .
\end{equation}
We now turn to the symplectic forms. On~$\CP^2$, we define a symplectic form~$\widehat{\omega}$ which turns~$\widehat{\phi}$ into a symplectomorphism as follows. On~$\pi^{-1}(W \setminus (V_a \cup V_b \cup V_c) )$ we define~$\widehat{\omega}$ to be the usual Fubini-Study form~$\omega$. On~$V_j$ we define~$\widehat{\omega}$ as the pullback form~$\tilde{\chi_j}^* \omega_{B_j''}$, where~$\tilde{\chi}_j$ is the corrected diffeomorphism constructed at the end of \emph{Step 3}. This yields a well-defined symplectic form which turns~$\widehat{\phi}$ into a symplectomorphism. Indeed, this follows from the fact that the maps~$\phi_0$,~$\psi_j$ and~$\rho_j$ are symplectomorphisms and~$\phi_j$ is defined as their composition~\eqref{eq:phij}. This also implies that the symplectic form~$\widehat{\omega}$ has the same total volume as the Fubini--Study form. By the Gromov--Taubes theorem~\cite[Remark 9.4.3 (ii)]{MS12}, the form~$\tilde{\omega}$ is symplectomorphic to the the Fubini--Study form and hence post-composing~$\widehat{\phi}$ from~\eqref{eq:phihat} with this symplectomorphism yields the desired symplectomorphism~\eqref{eq:mainsymplecto}.

\emph{Step 5: } The definition of the global map~$\tilde{\phi} \colon \CP^2 \rightarrow \CP(a^2,b^2,c^2)$ is obtained by replacing~$\phi_a$ from~\eqref{eq:phij} by~$\tilde{\phi}_a = \rho_a^{-1} \circ \tilde{\psi}_a \circ \tilde{\chi}_a$ and carrying out the rest of the construction as above. The map~$\tilde{\psi}_a$ is given by Lemma~\ref{lem:local}. Let us now identify~$\tilde{\phi}^{-1}(\cd)$, where~$\cd = \mu^{-1}(\pp \Delta_{a,b,c})$. Let~$\tilde{W} \subset W$ be the subset of~$W$ where~$\tilde{\phi}$ coincides with~$\phi_0$. Since~$\phi_0$ intertwines the toric structures on~$\pi^{-1}(W) \subset \CP^2$ and~$\mu^{-1}(W) \subset \CP(a^2,b^2,c^2)$, the set~$\tilde{\phi}^{-1}(\cd) \cap \pi^{-1}(\tilde{W})$ fibers over the three pieces of the boundary given by~$\tilde{W} \cap \pp\Delta_{a,b,c}$ and hence consists of three disjoint symplectic cylinders. 

We use Lemma~\ref{lem:local} to prove that the missing pieces~$\tilde{\phi}^{-1}(\cd) \cap B_j'$ for~$j \in \{a,b,c\}$ are also given by symplectic cylinders and that the union of the six cylinders is given by a torus. We again discuss the case of~$j = a$ since the other two are completely analogous. Recall that~$\rho_a \colon B_a^{\rm orb} \rightarrow B^4(d)/\Gamma_{a,q}$ is compatible with the toric structure and thus~$\rho_a(\cd \cap B_a^{\rm orb}) = \cd_{a,q_a} \cap \rho_a(B_a^{\rm orb})$, where~$\cd_{a,q_a} = \{ z_1z_2 = 0 \}/\Gamma_{a,q_a}$ as in Lemma~\ref{lem:local}. Indeed, this follows from the fact that~$\cd_{a,q_a}$ fibers over the boundary of the moment map image~$\angle_{a,q_a}$. By Lemma~\ref{lem:local}, we deduce that~$\tilde{\chi}_a^{-1}(\tilde{\phi}_a^{-1}(\rho_a(\cd \cap B_a^{\rm orb})))$ is given by the union of a pinwheel with a piece of a cylinder. Furthermore, recall that near the lens spaces at the respective boundaries, the map~$\tilde{\rho}_a^{-1}\tilde{\psi}_a\chi_a$ coincides with~$\phi_0$. This implies that the cylinder contained in~$B_a'$ has two boundary components at~$\pp B_a' = \Sigma_a'$ which are smoothly identified in a collar neighbourhood with boundary components of the set~$\tilde{\phi}^{-1}(\cd) \cap \pi^{-1}(\tilde{W})$ discussed in the previous paragraph. This proves the claim. 
\proofend

\begin{remark}
	{\rm 
		We suspect that there are shorter and more natural proofs of Proposition~\ref{lem:main}. In particular, one should be able to avoid Gromov--Taubes. One possibility we have hinted at above is working with a global degeneration and trying to analyze its vanishing cycle. This would completely avoid the use almost-toric fibrations. Another possibility, in the spirit of~\cite{Rua01} and~\cite{HarKav15} is to equip the explicit local degeneration from~\cite{LekMay14} with a family of integrable systems avoiding the pinwheel and extending the given toric structure on the boundary. The symplectomorphism from Proposition~\ref{lem:main} then follows from the usual toric arguments and it is automatically equivariant. Although this construction is elementary, it is somewhat outside the scope of this appendix and we hope to carry out the details elsewhere. This is also reminiscent of~\cite[Section 7]{GroVar21} and it is plausible one can apply results from this paper to prove the same result.
	}
\end{remark}

\bibliographystyle{amsalpha}
\bibliography{bibliographyssvz, mybibfile2}

\end{document}